\DeclareFontFamily{OT1}{rsfs}{}
\DeclareFontShape{OT1}{rsfs}{n}{it}{<-> rsfs10}{}
\DeclareMathAlphabet{\mathscr}{OT1}{rsfs}{n}{it}
\newtheorem{theorem}{Theorem}[section]
\newtheorem{corollary}{Corollary}[theorem]
\newtheorem{lemma}[theorem]{Lemma}
\newtheorem{proposition}[theorem]{Proposition}
\newtheorem{conjecture}[theorem]{Conjecture}
\theoremstyle{definition}
\newtheorem{definition}[theorem]{Definition}
\theoremstyle{remark}
\newtheorem{remark}[theorem]{Remark}
\newtheorem{example}{Example}
\numberwithin{equation}{section}
\title[Log-Concavity for Banana and Necklace Graphs]{Log-Concavity of the Grothendieck Classes of Banana Graphs and Clasped Necklaces}
\author{Stephanie Chen}
\address{Department of Mathematics, Harvard University, Cambridge, Massachusetts, 02138,
U.S.A.}
\email{stephchen@math.harvard.edu}
\begin{document}

\begin{abstract}
The Grothendieck classes of melonic graphs satisfy a recursive relation and may be written as polynomials in the class of the moduli space $\mathcal{M}_{0,4}$ with nonnegative integer coefficients, conjectured to be log-concave. In this article, we investigate log-concavity and ultra-log-concavity for the Grothendieck class of banana graphs and the three families of polynomials involved in the recursive relation.  We prove that all four are log-concave, establishing the specific case of banana graphs for the log-concavity conjecture. We additionally introduce the infinite family of clasped necklaces, melonic graphs obtained by replacing an edge of a $2$-banana with a string of $m$-bananas. Using the recursive relation, we explicitly compute the classes of clasped necklaces and prove that they too are log-concave.   
\end{abstract}

\maketitle

{\let\thefootnote\relax\footnotetext{2025/3/20}}

\tableofcontents


\section{Introduction}\label{sec:intro}
In a series of papers beginning with \cite{Banana}, Aluffi and Marcolli study the algebro-geometric Feynman rule defined by associating to a Feynman graph $\Gamma$ the class of its graph hypersurface complement $\mathbb{U}(\Gamma)$ in the Grothendieck ring of varieties. Aluffi, Marcolli, and Qaisar apply this process to melonic graphs in \cite{MR4620363}, motivated by the appearance of melonic graphs as the dominating diagrams in the large $N$ limit for the $O(N)^3$ tensor model of Klebanov and Tarnopolsky \cite{Klebanov_2017}. After obtaining a recursive formula for computing the Grothendieck classes of melonic graphs, they proved that the classes may be written as polynomials in the class $\mathbb{S}:= [\mathcal{M}_{0,4}]$ with positive integer coefficients and further conjectured that these polynomials are log-concave, i.e. the coefficients satisfy the inequality $a_{i-1}a_{i+1}\leq a_i^2$. 

In this note, we prove the conjecture for two infinite families of melonic graphs and further investigate log-concavity and related properties of the polynomials which appear in the recursive formula. 

\subsection{Banana graphs}
In scalar field theory, one studies the Feynman graphs of the theory; these are finite graphs with both internal edges (edges which have both a starting and ending vertex) and external `half edges' (only one end attached to a vertex) that satisfy certain properties in relation to the Langrangian for the theory. To each Feynman graph of a given theory we associate a Feynman integral which captures the combinatorics of the graph. In sufficiently large spacetime dimension, the integral is defined on the complement of the vanishing locus of the Kirchhoff-Symanzik polynomial of the graph $\Psi_{\Gamma}(t)$, a homogeneous polynomial in the internal edges of the graph. Consequently, in \cite{Algebro_Geometric_Feynman_Rules}, Aluffi and Marcolli define the Grothendieck class of a graph with $n$ internal edges to be the class $[\mathbb{A}^n\setminus \hat{X}_{\Gamma}]$ in the Grothendieck ring of varieties $K_0(\text{Var}_{\mathbb{Q}})$, where $\hat{X}_{\Gamma}$ is the vanishing locus of $\Psi_{\Gamma}(t)$ in $\mathbb{A}^n$. This process is described briefly in section \ref{sec:Grothendieck_class_graph} below and in far more detail in \cite{marcolli2009feynmanintegralsmotives} and \cite{Feyman_Motives_Book}.  

Banana graphs are graphs consisting of two vertices and multiple edges between them. An $n$-banana, denoted $\Gamma_n$, is a banana graph with $n$ edges. The physical motivation behind studying banana graphs comes from scalar field theories with an interaction term of the form $\mathcal{L}_{\text{int}}(\phi) = \phi^n$, for which $\Gamma_n$ is a vacuum bubble Feynman graph, i.e. a Feynman graph without external edges. The associated Feynman integral takes the form (cf. \cite[Lem~1.8]{Banana})
\begin{equation}
\label{eq:integral_banana}
    U(\Gamma_n,p) = \frac{\Gamma((1-\frac{D}{2})(n-1)+1)C(p)}{(4\pi)^{\frac{(n-1)D}{2}}}\int_{\sigma_n}\frac{(t_1\ldots t_n)^{(\frac{D}{2}-1)(n-1)-1}\omega_n}{\Psi_{\Gamma}(t)^{(\frac{D}{2}-1)n}}
\end{equation}
where 
\begin{equation*}
    C(p) = (\sum_{v\in V(\Gamma)} \sum_{e\in E_{\text{ext}}(\Gamma_n), t(e)=v}p_e)^2
\end{equation*}
is a function of the external momentum, $D$ is the spacetime dimension for the theory, $\sigma_n$ is the simplex
\begin{equation*}
    \sigma_n = \{(t_1,\ldots, t_n) \in \mathbb{R}_+^n | \sum_i t_i = 1\},
\end{equation*}
$\omega_n$ is the volume form, and
\begin{equation*}
    \Psi_{\Gamma_n}(t) = \sum_{i=1}^n \prod_{j\neq i}t_j
\end{equation*}
is the Kirchhoff-Symanzik polynomial for $\Gamma_n$. In \cite{Banana}, the authors use the Cremona transformation to compute the Grothendieck class of banana graphs, obtaining the formula
\begin{equation}
\label{eq:banana_class_intro}
     \mathbb{B}_n := [\mathbb{A}^n\setminus \hat{X}_{\Gamma_n}] = \mathbb{T}\frac{\mathbb{T}^n - (-1)^n}{\mathbb{T}+1} + n\mathbb{T}^{n-1}
\end{equation}
where $\mathbb{T} := [\mathbb{G}_m]$ is the Grothendieck class of the multiplicative group. 

This paper focuses more generally on melonic graphs, discussed in detail in \cite{MR4620363}. Melonic graphs are constructed by iteratively replacing edges with a string of banana graphs. Using the deletion-contraction relation for the Grothendieck class of a graph derived in \cite{Deletion_Contraction}, the authors of \cite{MR4620363} obtained a recursive formula for the Grothendieck class of melonic graphs in terms of the classes $\mathbb{B}_n$ and the polynomials 
\begin{equation}
    f_n = \frac{\mathbb{T}^n - (-1)^n}{\mathbb{T}+1} \quad\quad g_n = n\mathbb{T}^{n-1} - \frac{\mathbb{T}^n - (-1)^n}{\mathbb{T}+1} \quad\quad h_n = \frac{\mathbb{T}^n + (-1)^n\mathbb{T}}{\mathbb{T}+1}. 
\end{equation}
Using this recursive formula, they proved that the Grothendieck class of a melonic graph may be written as a polynomial in $\mathbb{S} = \mathbb{T} - 1$ with nonnegative integer coefficients and conjectured that these polynomials are log-concave. 
 
\subsection{Log-concavity}
A sequence $a_0,\ldots, a_n$ of real numbers is \textit{log-concave} if it satisfies the property $a_k^2\geq a_{k-1}a_{k+1}$ for all $0<k<n$. 
One classical example of a log-concave sequence is given by the binomial sequence ${n \choose 0}, {n\choose 1}, \ldots, {n \choose n}$. A real polynomial $f(x) = a_nx^n+\ldots + a_0$ is said to be log-concave if its coefficients $a_0,\ldots, a_n$ form a log-concave sequence. By a result of Newton's, any real-rooted polynomial is log-concave \cite[Thm~2]{Stanley_LogConcave_Survey}, \cite[Lem~1.1]{branden2014unimodalitylogconcavityrealrootedness}. 

Log-concavity has been a central property of many conjectures, one of the most famous being the Heron-Rota-Welsh conjecture, which states the the characteristic polynomial of a matroid is log-concave. This conjecture was settled positively by Adiprasito, Huh, and Katz in \cite{Hodge_Combo_Adiprasito_Huh_Katz} using a combinatorial analogue of the Hodge-Riemann relations by studying the Chow ring of a matroid. A brief discussion of the proof is given in \cite{Hodge_Combo_Baker}. See \cite{Stanley_LogConcave_Survey}, \cite{Brenti_LG_survey}, \cite{branden2014unimodalitylogconcavityrealrootedness} for more general surveys on log-concavity, related properties, and methods to prove sequences have these properties. 

One of the main results we obtain in this paper is the following log-concavity result (cf. Theorem \ref{thm:f_g_h_b_LC}). 
\begin{theorem}
\label{thm:intro_recursive_LC}
For all $n\geq 0$, the Grothendieck classes
\begin{equation}
\label{eq:recursive_polys_intro}
    \begin{cases}
     \mathbb{B}_n = (\mathbb{S}+1)\frac{(\mathbb{S}+1)^n-(-1)^n}{\mathbb{S}+2} + n(\mathbb{S}+1)^{n-1} \\
     f_n(\mathbb{S}) = \frac{(\mathbb{S}+1)^n - (-1)^n}{\mathbb{S}+2}\\
     g_n(\mathbb{S}) = n(\mathbb{S}+1)^{n-1} - \frac{(\mathbb{S}+1)^n - (-1)^n}{\mathbb{S}+2}\\
     h_n(\mathbb{S}) = \frac{(\mathbb{S}+1)^n + (-1)^n(\mathbb{S}+1)}{\mathbb{S}+2}
    \end{cases}
\end{equation}
are all log-concave as polynomials in $\mathbb{S}$.  
\end{theorem}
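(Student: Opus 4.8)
The plan is to reduce all four families to a single combinatorial statement and then prove that statement by an explicit two‑sided inductive estimate. Writing $\mathbb{T}=\mathbb{S}+1$, elementary manipulation of the definitions yields $f_{n}=\mathbb{T} f_{n-1}+(-1)^{n-1}$, $f_n+f_{n-1}=\mathbb{T}^{n-1}$, $h_n=\mathbb{T} f_{n-1}=f_n+(-1)^n$, $g_n=(n-1)\mathbb{T}^{n-1}+f_{n-1}$, and $\mathbb{B}_n=\mathbb{T} f_n+n\mathbb{T}^{n-1}=g_n+\bigl(\mathbb{T}^n-(-1)^n\bigr)$. Combined with the identity $(\mathbb{S}+2)f_n=\mathbb{T}^n-(-1)^n$, these show that each of $f_n$, $g_n$, $\mathbb{B}_n$ has the form $S(\mathbb{S})/(\mathbb{S}+2)$ for a polynomial $S$ with nonnegative coefficients and $S(-2)=0$; writing $S(x)=\sum_i s_i x^i$, the sequence $(s_i)$ is, up to an $O(1)$ adjustment of its two lowest entries, equal to $\binom{n}{i}$ for $f_n$, to $(2n-1-i)\binom{n}{i}$ for $g_n$, and to $u_i+v_i$ with $u_i=(2n-i)\binom{n}{i}$, $v_i=\binom{n+1}{i}$ for $\mathbb{B}_n$. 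The first two are manifestly log‑concave (a row of Pascal's triangle scaled by a positive affine sequence), and for the third, a term‑by‑term comparison of consecutive ratios shows the two log‑concave summands are compatible — the cross‑products $u_{i-1}v_{i+1}$ and $u_{i+1}v_{i-1}$ are each $\le u_iv_i$ — so the sum is log‑concave as well; the $O(1)$ adjustments of the lowest entries are checked directly to preserve log‑concavity. Nonnegativity of all the coefficients is an immediate induction on $n$ from the displayed recursions, and since $h_n=f_n+(-1)^n$ differs from $f_n$ only in its constant term, log‑concavity of $h_n$ follows from that of $f_n$ plus the single elementary inequality $c_1^2\ge c_2$ (needed only when $n$ is even).

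It therefore suffices to prove the following. \emph{If $S(x)=\sum_i s_ix^i$ has nonnegative coefficients, $(s_i)$ is log‑concave with no internal zeros, and $S(-2)=0$, then the coefficient sequence $(a_i)$ of $S(x)/(x+2)$ is log‑concave.} The sequence $(a_i)$ satisfies $a_{i-1}+2a_i=s_i$, and using this at two consecutive indices to eliminate $a_{i-1}$ and $a_{i+1}$ gives the identity $a_i^2-a_{i-1}a_{i+1}=\tfrac12\bigl[(s_i+2s_{i+1})a_i-s_is_{i+1}\bigr]$. Hence log‑concavity at an interior position $i$ is equivalent to the two‑sided bound $\dfrac{s_is_{i+1}}{s_i+2s_{i+1}}\le a_i\le\dfrac{s_{i+1}^2}{s_{i+1}+2s_{i+2}}$, and a one‑line computation shows each of these intervals is nonempty exactly because $(s_i)$ is log‑concave. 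Normalizing $\gamma_i=a_i/s_i$ turns the recursion into $\gamma_i=\tfrac12-\tfrac12\,\tfrac{s_{i-1}}{s_i}\,\gamma_{i-1}$, so that $\gamma_i$ depends monotonically decreasingly on $\gamma_{i-1}$, and the target bound becomes $L_i\le\gamma_i\le U_i$ for explicit rational functions $L_i,U_i$ of $i$.

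These bounds are proved by induction. Because $\gamma_i$ decreases in $\gamma_{i-1}$, the upper estimate $\gamma_{i-1}\le U_{i-1}$ automatically forces $\gamma_i\ge L_i$ — with equality in the extremal case — so the lower half of the bound is free, and it is the upper estimate that must be propagated; one computes that $\gamma_{i-1}\ge L_{i-1}$ already forces $\gamma_i\le L_{i-1}$, whence $\gamma_i\le U_i$ whenever $L_{i-1}\le U_i$, and in the model case $s_i=\binom ni$ this last inequality holds precisely when $i\lesssim\tfrac23 n$. After a pair of base cases (log‑concavity at the two lowest interior positions) the induction then yields log‑concavity at every position in roughly the first two thirds of the range. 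The remaining positions are handled by running the identical argument on the reversed coefficient sequence — legitimate because log‑concavity, positivity, and the hypotheses on $S$ are reversal‑symmetric, the reverse of $S(x)/(x+2)$ being $\widetilde{S}(x)/(2x+1)$ (with $\widetilde S$ the reverse polynomial of $S$), whose coefficients again satisfy a two‑term recursion — which secures roughly the last third of the range; since the two ranges overlap near the two‑thirds point they cover everything. The main obstacle is exactly this range limitation: the direct induction genuinely runs out of room after two thirds of the indices, because the equality in the propagation step leaves no slack to improve it, so the reversal and a careful accounting of which positions each half of the argument secures are unavoidable. The only other care required is in the handful of base cases and in the compatibility and $O(1)$‑correction checks for the $\mathbb{B}_n$ source.
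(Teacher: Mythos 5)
There is a genuine gap at the heart of your reduction: the italicized lemma you claim it ``suffices to prove'' is false. Take $S(x)=(x+2)(x^2+x+4)=x^3+3x^2+6x+8$. Its coefficient sequence $(8,6,3,1)$ is nonnegative, has no internal zeros, is log-concave ($36\geq 24$, $9\geq 6$), and $S(-2)=0$; yet the quotient $S(x)/(x+2)=x^2+x+4$ has coefficient sequence $(4,1,1)$, which is not log-concave ($1<4$), and it even has nonnegative coefficients, so no added positivity hypothesis rescues the statement. This is consistent with your own analysis: your identity $a_i^2-a_{i-1}a_{i+1}=\tfrac12\bigl[(s_i+2s_{i+1})a_i-s_is_{i+1}\bigr]$ and the nonemptiness of the intervals $[L_i,U_i]$ only say that log-concavity of $(s_i)$ makes the target bounds \emph{consistent}, not that the actual quotient coefficients satisfy them; the propagation step needs the extra inequality $L_{i-1}\leq U_i$, which is not implied by log-concavity of $(s_i)$ (it fails for the counterexample above) and which you verify only ``in the model case $s_i=\binom{n}{i}$'' and only for roughly the first two thirds of the range. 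Consequently the argument cannot be run at the stated level of generality, and for the cases you actually need — $s_i=(2n-1-i)\binom{n}{i}$ for $g_n$ and $u_i+v_i$ for $\mathbb{B}_n$, plus the $O(1)$-adjusted low-order entries, plus the reversed sequences with their different recursion $\tilde s_j=\tilde a_j+2\tilde a_{j-1}$ — neither the propagation inequality, nor the base cases, nor the claimed two-thirds/one-third coverage is established. As it stands the proposal proves log-concavity of the numerators $S$, not of $f_n$, $g_n$, $\mathbb{B}_n$ themselves.

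For comparison, the paper avoids division by $\mathbb{S}+2$ altogether: it uses the recursion $f_{m+1}=(s+1)f_m+(-1)^m$ (and the analogous recursions for the auxiliary families $g_{m,n}$, $b_{m,n}$, and $h_m=(s+1)f_{m-1}$) together with the fact that multiplying a log-concave polynomial with nonnegative coefficients and no internal zeros by $(s+1)$ preserves log-concavity, so that at each inductive step only the one or two lowest-degree coefficients can violate the inequality, and those are checked by explicit closed-form computation. If you want to salvage your approach, you would either have to isolate and prove the precise extra hypothesis on $(s_i)$ that makes division by $(x+2)$ preserve log-concavity and verify it (including under reversal) for all three numerator families, or switch to an argument, like the paper's, that works upward through the recursion rather than downward through the division.
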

In particular, this proves Conjecture 3.4 of \cite{MR4620363} for all banana graphs. As a follow-up to this result, we consider the property of being ultra-log-concave, i.e. that the sequence $\frac{(a_k)}{{n\choose k}}$ is a log-concave sequence. We fully establish for which $n\geq 1$ the four polynomials $\mathbb{B}_n, f_n(\mathbb{S}), g_n(\mathbb{S}), h_n(\mathbb{S})$ are respectively ultra-log-concave (cf. Proposition \ref{prop:f_g_h_b_ULC}): 
\begin{proposition}
 Let $n\geq 4$. If $n$ is odd, then $\mathbb{B}_n, f_n(\mathbb{S}), g_n(\mathbb{S}), h_n(\mathbb{S})$ are all not ultra-log-concave as polynomials in $\mathbb{S}$. If $n$ is even, only $\mathbb{B}_n$ is ultra-log-concave.     
\end{proposition}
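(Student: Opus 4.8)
The plan is to reduce both halves of the proposition to explicit binomial inequalities among the coefficients and to exploit that ordinary log-concavity is already available from Theorem~\ref{thm:f_g_h_b_LC}, so that only the extra slack demanded by ultra-log-concavity has to be produced (or, in the even case, shown to be violated).

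First I would record closed forms for the coefficients in $\mathbb{S}$. Writing $f_n(\mathbb{S})=\sum_{k=0}^{n-1}c_k\mathbb{S}^k$ and matching coefficients in $(\mathbb{S}+2)f_n(\mathbb{S})=(\mathbb{S}+1)^n-(-1)^n$ gives $c_{n-1}=1$ and $c_{k-1}=\binom{n}{k}-2c_k$, whence $c_k=\sum_{i\ge 1}(-2)^{i-1}\binom{n}{k+i}$; equivalently $f_n=\sum_{j=0}^{n-1}(-1)^{n-1-j}(\mathbb{S}+1)^j$, so $c_k=\sum_{j=k}^{n-1}(-1)^{n-1-j}\binom{j}{k}$. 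The relations $f_n+f_{n-1}=(\mathbb{S}+1)^{n-1}$, $h_n=(\mathbb{S}+1)f_{n-1}$, $g_n=n(\mathbb{S}+1)^{n-1}-f_n$ and $\mathbb{B}_n=(\mathbb{S}+2)f_n+g_n=(\mathbb{S}+1)^{n-1}(\mathbb{S}+n+1)-(-1)^n-f_n$ then express the coefficients of $g_n,h_n,\mathbb{B}_n$ as simple combinations of the $c_k$ and of $\binom{n-1}{k},\binom{n}{k}$. I would also note $\deg f_n=\deg g_n=\deg h_n=n-1$, $\deg\mathbb{B}_n=n$, and that for even $n$ the constant term $c_0$ of $f_n$ vanishes while for odd $n$ it equals $1$.

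Next I would write the ultra-log-concavity condition for a polynomial $P=\sum_{k=0}^d a_k\mathbb{S}^k$ with positive coefficients as the family $k(d-k)a_k^2\ge(k+1)(d-k+1)a_{k-1}a_{k+1}$ for $1\le k\le d-1$ --- the log-concavity inequality tightened by the Newton factor $\frac{k(d-k)}{(k+1)(d-k+1)}<1$, which is appreciably below $1$ only near the ends $k=1$ and $k=d-1$. After substituting the closed forms, the problem splits into (a) a uniform bound for $k$ in the middle range, where the margin already present in log-concavity suffices, and (b) an index-by-index treatment of the few small $k$ (and symmetrically the few large $k$). For odd $n$, part (b) shows that the alternating corrections $c_k$ enter with the sign that keeps $a_{k-1}a_{k+1}$ small relative to $a_k^2$, yielding ultra-log-concavity of $f_n$, and, through the coefficient expressions for $g_n=n(\mathbb{S}+1)^{n-1}-f_n$, $h_n=(\mathbb{S}+1)f_{n-1}$ and $\mathbb{B}_n=(\mathbb{S}+1)^{n-1}(\mathbb{S}+n+1)-(-1)^n-f_n$, of the other three. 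For even $n$, part (b) instead exhibits the explicit small index at which the inequality fails for each of $f_n$, $g_n$, $h_n$ --- a one-line computation from the formulas above --- so those three are not ultra-log-concave, while $\mathbb{B}_n$ survives because its leading part $(\mathbb{S}+1)^{n-1}(\mathbb{S}+n+1)$ is real-rooted, hence ultra-log-concave by Newton's inequalities, and the remaining term $-(-1)^n-f_n$ is a perturbation small enough to be absorbed coefficient by coefficient.

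The main obstacle is part (b). Because the $c_k$ are alternating sums of binomial coefficients, the ultra-log-concavity inequalities do not factor, and at the smallest values of $k$ the correction terms are of the same order as the leading binomial terms --- which is exactly why the parity of $n$ governs the outcome there. I expect to need a careful pairing of consecutive terms in the alternating sum to pin down $c_1,c_2,c_3$ (and their analogues for $g_n,h_n,\mathbb{B}_n$) sharply enough, together with an honest check of the first few values of $n$ in each parity class before the uniform estimates of part (a) take over.
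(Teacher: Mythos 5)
The central step of your plan cannot be carried out, because the odd half of the statement is false as written. For $n=5$ one has $f_5(\mathbb{S})=\mathbb{S}^4+3\mathbb{S}^3+4\mathbb{S}^2+2\mathbb{S}+1$, and the ULC inequality at $k=1$ (degree $d=4$) demands $1\cdot 3\cdot 2^2\ge 2\cdot 4\cdot 1\cdot 4$, i.e.\ $12\ge 32$, which fails; similarly $g_5$ fails at $k=2$ ($2704<2754$), $h_5$ at $k=3$ ($27<32$), and even $\mathbb{B}_5$ at $k=1$ ($2116<2160$). What is true --- and what the paper actually proves as Proposition \ref{prop:f_g_h_b_ULC} (see also Tables \ref{tab:f_m_ULC_1-10}--\ref{tab:b_m_ULC_1-10}) --- is that for odd $m\geq 5$ \emph{none} of $f_m,g_m,h_m,\mathbb{B}_m$ is ultra-log-concave, while for even $m$ only $\mathbb{B}_m$ is; the introduction's wording of the proposition is inconsistent with the body on exactly this point, and the discrepancy would have surfaced in the ``honest check of the first few values in each parity class'' that your plan defers but never performs. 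In particular, your key heuristic for part (b) --- that for odd $n$ the alternating corrections keep $a_{k-1}a_{k+1}$ small relative to $a_k^2$ --- is backwards at $k=1$: odd $n$ is precisely the parity for which the constant term of $f_n$ is $1$ rather than $0$, and that nonzero $a_0$ is what breaks the inequality.

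The one genuinely true claim you address, ultra-log-concavity of $\mathbb{B}_n$ for even $n$, is also not proved by your sketch. Writing $\mathbb{B}_n=(\mathbb{S}+1)^{n-1}(\mathbb{S}+n+1)-(-1)^n-f_n$ and asserting that the real-rooted part absorbs the ``perturbation'' $-f_n$ coefficient by coefficient is unsupported: ultra-log-concavity is not stable under subtracting a polynomial whose coefficients are of the same order of magnitude as those of the real-rooted factor, and no quantitative comparison is offered. The paper's argument is structurally your (a)/(b) split, but made rigorous differently: it introduces $b_{m,n}=n(s+1)^{m-1}+(s+1)f_m$, uses the recursion $b_{m+1,n}=(s+1)b_{m,n}+(-1)^m(s+1)$ together with closure of ULC under multiplication by $(s+1)$ (Proposition \ref{prop:prod_log_concave}) to prove by induction that every coefficient of degree at least $3$ satisfies the ULC inequality (Lemma \ref{lem:b_m_n_ULC_3}, and Lemma \ref{lem:g_m_n_ULC_3} for $g$), and then checks degrees $1$ and $2$ from closed-form coefficient expressions (Sections \ref{sec:f_m_coeff}--\ref{sec:b_m_n_coeff}); the sign of the degree-$1$ margin there is exactly what separates even $m$ (ULC holds for $\mathbb{B}_m$) from odd $m$ (it fails). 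If you rework your argument, aim it at that corrected statement and at this induct-then-check-low-degrees scheme rather than at the proposition as printed in the introduction.
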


We also establish log-concavity for another family of melonic graphs, denoted $G'_{m,n}$ for $m\geq 1, n\geq 2$, called \textit{clasped necklaces}. These are melonic graphs obtained from a $2$-banana by replacing a single edge with a string of $n-1$ $m$-bananas. The clasped necklace $G'_{3,7}$ is drawn below. Using the recursive formula, we obtain closed forms for the Grothendieck class of a clasped necklace (cf. Lemma \ref{lem:clasp_necklace_class_T}) and further prove that they are all log-concave as polynomials in $\mathbb{S}$ (cf. Proposition \ref{prop:clasp_necklace_LC}). 
\begin{figure}[H]
    \centering
    \includegraphics[width=0.4\linewidth]{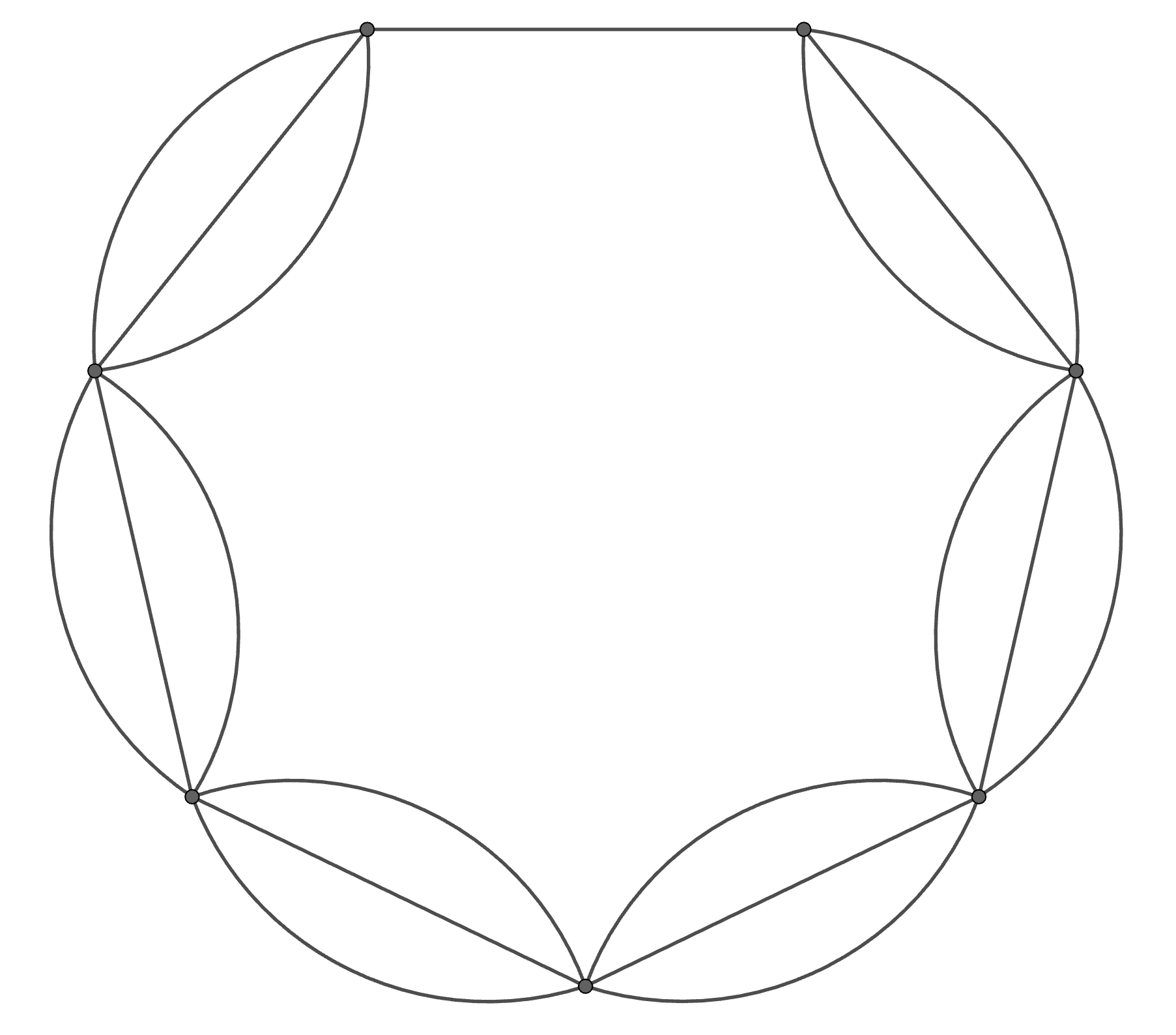}
    \caption{The clasped necklace $G'_{3,7}$, obtained by replacing an edge of a $2$-banana with a string of $6$ $3$-bananas. }
    \label{fig:intro_clasped_necklace_3_7}
\end{figure}

\subsection{Summary of paper}
The rest of this paper is organized as follows. In section 2, we recall the recursive formula for the Grothendieck class of a melonic graph and some key results on log-concave polynomials that we use in our proofs. We present results on the log-concavity and ultra log-concavity of the polynomials (\ref{eq:recursive_polys_intro}) in section 3. For clarity, most of the computations used in the proofs are deferred until section 5. Finally, in section 4, we derive the Grothendieck classes of clasped necklaces and establish their log-concavity as polynomials in $\mathbb{S}$. 

\textit{Acknowledgments.}
The results presented here come from an undergraduate research project conducted during the summer of 2023 as part of the California Institute of Technology's Summer Undergraduate Research Fellowship. The author is grateful to their mentors Paolo Aluffi and Matilde Marcolli for the initial project proposal, their endless support, and numerous helpful discussions throughout both the project and the preparation of this manuscript. 


\section{Grothendieck classes of melonic graphs}
\subsection{Grothendieck ring of varieties}
We first briefly recall the construction of the Grothendieck ring of varieties. Fix a field $k$ of characteristic 0. The Grothendieck group of $k$-varieties, denoted $K_0(\text{Var}_k)$, is the abelian group generated by isomorphism classes of $k$-varieties modulo the cut-and-paste relation
\begin{equation*}
    [X] = [Y] + [X\setminus Y]
\end{equation*}
for any variety $X$ and closed subvariety $Y$. 
The group $K_0(\text{Var}_k)$ admits a ring structure by defining multiplication of classes of varieties by $[X]\cdot [Y] = [X\times Y]$ and extending by linearity.  The resulting ring is called the Grothendieck ring of $k$-varieties. One may observe that any ring-valued invariant of algebraic $k$-varieties which is preserved by isomorphisms, satisfies the cut-and-paste relation, and is multiplicative over products must factor through $K_0(\text{Var}_k)$, hence the assignment of a variety $X$ to its Grothendieck class $[X]\in K_0(\text{Var}_k)$ may be viewed as a `universal Euler characteristic'. 

We fix notation for a couple key classes. First, we denote by $\mathbb{L}:= [\mathbb{A}^1]$ the class of the affine line.
We also define $\mathbb{T}:= [\mathbb{G}_m] = \mathbb{L}-1$ and $\mathbb{S}:= [\mathbb{P}^1 \setminus \{0,1,\infty\}] = \mathbb{T}-1$, the classes of the multiplicative group and the projective line with three points removed respectively. Note that $\mathbb{S}$ also arises as the class $[\mathcal{M}_{0,4}]$ of the moduli space of stable genus $0$ curves with $4$ marked points. 

\subsection{Grothendieck class of a graph}
\label{sec:Grothendieck_class_graph}
In \cite{Algebro_Geometric_Feynman_Rules}, Aluffi and Marcolli describe a procedure for constructing algebro-geometric Feynman rules, which assign to each Feynman graph an invariant in a commutative ring $\mathcal{R}$ satisfying certain conditions \cite[Def~2.2]{Algebro_Geometric_Feynman_Rules}. They defined a motivic Feynman rule to be one that factors through $K_0(\text{Var}_{\mathbb{Q}})$ via the Grothendieck class of a graph, to be defined below.

The main motivation came from Feynman rules in pertubative scalar field theory, where one considers one-particle-irreducible (1PI) Feynman graphs: finite edge-connected graphs where each vertex has valance equal to the degree of one of the monomials in the Langrangian for the theory, and each edge is either an internal edge connecting two vertices or an external edge with only one vertex. Fix $N:= \#E_{\text{ext}}(\Gamma), n:= \#E_{\text{int}}(\Gamma)$ as the number of external and internal edges respectively. Let $D\in \mathbb{N}$ be the spacetime dimension for the theory.
In the massless case, the Feynman rules dictate that we associate to $\Gamma$ a formal integral
\begin{equation}
\label{eq:integral_massless}
    U(\Gamma, p) = C\frac{\Gamma(n-\frac{D\ell}{2})}{(4\pi)^{\frac{D\ell}{2}}}\int_{\sigma_n}\frac{P_{\Gamma}(t,p)^{-n+\frac{D\ell}{2}}\omega_n}{\Psi_{\Gamma}(t)^{-n+(\ell+1)\frac{D}{2}}}
\end{equation}
where $C = \prod_{v\in V(\Gamma)} \lambda_v(2\pi)^D$ with $\lambda_v$ the coupling constant for the monomial of the Lagrangian with degree equal to the valence of $v$, $\omega_n$ is the volume form of the simplex $\sigma_n = \{t=(t_1,\ldots, t_n)\in \mathbb{R}^n_+ | \sum_i t_i = 1\}$, $\ell:=b_1(\Gamma)$ is the first Betti number of $\Gamma$, and $P_{\Gamma}(t,p), \Psi_{\Gamma}(t)$ are homogeneous polynomials in $t_1,\ldots, t_n$ defined as follows. 

Fix some ordering of the internal edges $E_{\text{int}}(\Gamma) = \{e_1,\ldots, e_n\}$. 
For each external edge $e\in E_{\text{ext}}(\Gamma)$, let $p_e$ be the incoming external momentum associated to $e$. For any cut-set $C$ dividing $\Gamma$ into two connected components $\Gamma = \Gamma_1\cup\Gamma_2$, we set 
\begin{equation*}
    s_C = \left( \sum_{v\in V(\Gamma_1)}P_v\right)^2 = \left(\sum_{v\in V(\Gamma_2)}P_v\right)^2 
\end{equation*}
where
\begin{equation*}
    P_v = \sum_{e\in E_{\text{ext}}(\Gamma), t(e) = v} p_e. 
\end{equation*}
The second Symanzik polynomial $P_{\Gamma}(t,p)$ is then given by
\begin{equation}
    P_{\Gamma}(t,p) = \sum_{C\subseteq \Gamma} s_C\prod_{e_i\in C}t_i
\end{equation}
where the sum is taken over all cut-sets of $\Gamma$. This is a homogeneous polynomial of degree $\ell + 1$. 

The Kirchhoff-Symanzik polynomial $\Psi_{\Gamma}(t)$ is defined by 
\begin{equation}
\label{eq:KirchhoffSymanzik_poly}
    \Psi_{\Gamma}(t) = \sum_{T\subseteq \Gamma}\prod_{e_i \notin E(T)}t_i
\end{equation}
where the sum is taken over all spanning trees of $\Gamma$. Note that $\Psi_{\Gamma}(t)$, which is homogeneous of degree $\ell$, may be defined for any finite graph $G$. The Kirchhoff-Symanzik polynomial may also be defined as the determinant of a $\ell\times \ell$ matrix $M_{\Gamma}(t)$ associated to the graph; see section 1.2 of \cite{Banana} for more details. 

In the massive case, the parametric Feynman integral takes the form
\begin{equation}
\label{eq:integral_massive}
    U(\Gamma, p) = C\frac{\Gamma(n-\frac{D\ell}{2})}{(4\pi)^{\frac{D\ell}{2}}}\int_{\sigma_n}\frac{V_{\Gamma}(t,p)^{-n+\frac{D\ell}{2}}\omega_n}{\Psi_{\Gamma}(t)^{\frac{D}{2}}}
\end{equation}
with the full formula for $V_{\Gamma}(t,p)$ given by (3.15) in the proof of \cite[Prop~3.1.7]{Feyman_Motives_Book}. 
When $D\geq \frac{2n}{\ell}$, the integrals (\ref{eq:integral_massless}) and (\ref{eq:integral_massive}) are defined on the complement of the vanishing locus of $\Psi_{\Gamma}(t)$. In the massless case, as the polynomials are homogeneous, we are interested in the complement in $\mathbb{P}^{n-1}$ of the graph hypersurface 
\begin{equation}
\label{eq:proj_graph_hyper}
    X_{\Gamma} = \{t\in \mathbb{P}^{n-1}|  \Psi_{\Gamma}(t)=0\}.
\end{equation}
In the massive case, we consider instead the complement in $\mathbb{A}^n$ of the affine hypersurface
\begin{equation}
\label{eq:affine_graph_hyper}
    \hat{X}_{\Gamma} = \{t\in \mathbb{A}^n | \Psi_{\Gamma}(t)=0\}. 
\end{equation}
In both settings, the assignment of $U(\Gamma, p)$ to $\Gamma$ satisfies a multiplicative property: if $\Gamma$ is a disjoint union $\Gamma = \Gamma_1\cup \Gamma_2$, then 
\begin{equation*}
    U(\Gamma, p) = U(\Gamma_1, p_1)U(\Gamma_2, p_2). 
\end{equation*}
To account for both the massless and massive cases, as well as preserve the multiplicative property (see \cite[Lem~~2.3]{Algebro_Geometric_Feynman_Rules}), Aluffi and Marcolli defined a motivic Feynman rule to be an assignment of a Feynman graph $\Gamma$ to an invariant $U(\Gamma)\in \mathcal{R}$ lying in a fixed commutative ring $\mathcal{R}$ such that $\Gamma \mapsto U(\Gamma)$ factors through a ring homomorphism $K_0(\text{Var}_{\mathbb{Q}})\to \mathcal{R}$ via $\Gamma \mapsto [\mathbb{A}^n\setminus \hat{X}_{\Gamma}]$. This leads us to the definition of the Grothendieck class of a finite graph in greater generality.

\begin{definition}
Let $G$ be a finite graph.
The Grothendieck class of $G$ is the class 
\begin{equation}
    \mathbb{U}(G) = [\mathbb{A}^n\setminus \hat{X}_G]\in K_0(\text{Var}_{\mathbb{Q}})
\end{equation}
with $n$ the number of edges of $G$, and $\Psi_G(t), \hat{X}_G$ as defined in (\ref{eq:KirchhoffSymanzik_poly}), (\ref{eq:affine_graph_hyper}). 
\end{definition}

\begin{example}[Banana graphs]
A banana graph is a graph consisting of two vertices and multiple edges. 
We denote by $\mathbb{B}_n$ the Grothendieck class of the banana graph with $n>0$ edges. Then,
(cf. \cite[Thm~3.10]{Banana}, \cite[Lem~2.6]{Algebro_Geometric_Feynman_Rules}, \cite[Cor~5.6]{Deletion_Contraction})
 \begin{equation}
 \label{eq:banana_grothendieck_class}
     \mathbb{B}_n = \mathbb{T}\frac{\mathbb{T}^n - (-1)^n}{\mathbb{T}+1} + n\mathbb{T}^{n-1} = (\mathbb{S}+1)\frac{(\mathbb{S}+1)^n - (-1)^n}{\mathbb{S}+2} + n(\mathbb{S}+1)^{n-1} 
 \end{equation}
In particular, letting $L$ be the $1$-banana, we see that $\mathbb{U}(L) = \mathbb{L}$. Banana graphs and their Grothendieck classes are the focus of \cite{Banana}, in which lays much of the foundation for \cite{MR4620363} due to the role of banana graphs as building blocks for melonic graphs.
\end{example}

The Grothendieck class of a graph satisfies the following properties with respect to elementary graph operations (cf. \cite[\S 2]{Algebro_Geometric_Feynman_Rules}): 
 \begin{itemize}
 \item For two disjoint graphs $\Gamma_1,\Gamma_2$ or two graphs that share only a single vertex, 
 \begin{equation*}
     \mathbb{U}(\Gamma_1 \cup \Gamma_2) = \mathbb{U}(\Gamma_1)\mathbb{U}(\Gamma_2)
 \end{equation*}
 \item If $\Gamma$ is obtained by connecting two disjoint graphs $\Gamma_1,\Gamma_2$ with a single edge, then 
 \begin{equation*}
     \mathbb{U}(\Gamma) =  [\mathbb{A}^1]\mathbb{U}(\Gamma_1)\mathbb{U}(\Gamma_2)
 \end{equation*}
     \item If $\Gamma'$ is obtained from $\Gamma$ by attaching an edge to a vertex, then
     \begin{equation*}
         \mathbb{U}(\Gamma') = [\mathbb{A}^1]\mathbb{U}(\Gamma)
     \end{equation*}
     \item If $\Gamma'$ is obtained from $\Gamma$ by splitting an edge, then 
     \begin{equation*}
         \mathbb{U}(\Gamma') = [\mathbb{A}^1]\mathbb{U}(\Gamma)
     \end{equation*}
 \end{itemize}
 Additionally in \cite{Deletion_Contraction}, Aluffi and Marcolli derived a deletion-contraction relation 
for the Grothendieck classes of graphs. Let $\Gamma$ be a finite graph. Fix an edge $e$ in $\Gamma$. We denote by $\Gamma\setminus e$ the graph obtained from $\Gamma$ by deleting the edge $e$ and $\Gamma / e$ the graph obtained by contracting (i.e. identifying the two vertices of) the edge $e$. Let $\Gamma_{ne}$ be the graph obtained by replacing $e$ by $n$-multiple edges, i.e. an $n$-banana for $n>0$.  In particular, $\Gamma_e = \Gamma$ and $\Gamma_{0e}=\Gamma \setminus e$. We say that $e$ is a bridge if the removal of $e$ disconnects the graph and a looping edge if $e$ is attached to only one vertex. In the case that $e$ is neither a bridge nor a looping edge, we have the following deletion-contraction relation (cf. \cite[Thm~5.3]{Deletion_Contraction})
\begin{equation}
\label{eq:deletion_contraction}
    \mathbb{U}(G'_{ne}) = f_{n}\mathbb{U}(G') + g_{n}\mathbb{U}(G'/e) + h_{n}\mathbb{U}(G' \setminus e)
\end{equation}
where (cf. \cite[Cor~5.7]{Deletion_Contraction})
\begin{equation}
\label{eq:recursive_poly}
    f_n = \frac{\mathbb{T}^n - (-1)^n}{\mathbb{T}+1} \quad\quad g_n = n\mathbb{T}^{n-1} - \frac{\mathbb{T}^n - (-1)^n}{\mathbb{T}+1} \quad\quad h_n = \frac{\mathbb{T}^n + (-1)^n\mathbb{T}}{\mathbb{T}+1}
\end{equation}
We note that notationally we have swapped $g_n,h_n$ in comparison to \cite{Deletion_Contraction} in order to match the notation used in \cite{MR4620363}. 
These properties are useful for computing the Grothendieck class of a graph, as we demonstrate below. 


\subsection{Melonic graphs}
\label{sec:melonic_graphs}
The authors of \cite{MR4620363} were interested in melonic graphs due to their role as the dominating diagram in the CTKT models of \cite{Bonzom_2011} in the large $N$ limit. Melonic graphs are graphs that may be constructed by starting with a single edge (and two vertices) and repeatedly replacing edges with strings of banana graphs.  More precisely, a melonic construction is defined as follows.
\begin{definition}
    A \textit{$n$-stage melonic construction} is a list $T = (t_1,\ldots, t_n)$ of tuples $t_s = (b_s, p_s, k_s)$ such that
\begin{enumerate}
    \item $b_s=(a_1,\ldots, a_{r_s})$ is a non-empty tuple of positive integers with length $|b_s|:= r_s\geq 1$, 
    \item $p_s$ is an integer with $p_1=0$ and $0<p_s<s$ for $s>1$,
    \item $k_s$ is an integer satisfying $1 \leq k_s \leq |b_{p_s}|$,
    \item and for all $t_i = ((a_1,\ldots,a_{r_i}), p_i, k_i)$, $i=1,\ldots, n$, $j = 1, \ldots, r_i$, at most $a_j$ tuples $t_s=(b_s, p_s, k_s)$ are such that $p_s=i, k_s=j$. 
\end{enumerate}
An $n$-stage melonic construction is also said to have \textit{depth} $n$.
\end{definition}

Each tuple in the melonic construction $T$ can be interpreted as a step in the corresponding melonic graph construction. One starts with a single edge connecting two vertices in stage $0$. The first tuple $t_1 = ((a_1,\ldots, a_{r_1}), 0, 1)$ represents replacing the single edge of the first (and only) banana in stage $0$ with a string of $r_1$ bananas, where the $i$th banana has $a_i$ edges ($1 \leq i \leq r_1$). This is stage $1$. Continuing, the tuple $t_s = ((a_1,\ldots, a_{r_s}), p_s, k_s)$ corresponds to replacing a single edge of the $k_s$-th banana constructed in stage $p_s$ with a string of $r_s$ bananas, again with the $i$-th banana being an $a_i$-banana ($1 \leq i \leq r_s$). Stage $p$ is the resulting graph after applying $p$ tuples. 

A \textit{melonic graph} is a graph resulting from a melonic construction in this manner. As noted in \cite{MR4620363}, two different melonic constructions may determine the same melonic graph (up to graph isomorphism), in which case the constructions are said to be equivalent. 
If a melonic construction $T$ further satisfies the condition that 
for all $t_i = ((a_1,\ldots, a_{r_i}), p_i, k_i)$, $i=1,\ldots, n$, if $a_j = 1$ then $k_s\neq j$ for all $s$ such that $p_s=i$, it is said to be \textit{reduced}. In terms of the stage by stage construction, this is equivalent to no $1$-banana constructed after stage 0 being replaced in a later stage. All melonic graphs admit a reduced construction (\cite[Lem~2.1]{MR4620363}). 

 The authors of \cite{MR4620363} developed a recursive formula for calculating the Grothendieck class of a melonic graph, which we review here. 
Given a reduced melonic construction $T$ with a total of $n$ stages, the following cases are considered. First, if $n=1$, i.e. $T$ corresponds to a graph that is a string of bananas $t_1 = ((a_1,\ldots, a_r), 0, 1)$, the Grothendieck class is simply a product of banana classes, $\mathbb{U}(\Gamma) = \prod_{i=1}^r \mathbb{B}_{a_i}$. 
The second case is where $n>1$ and $t_n = ((a), p, k)$ consists of adding a single $a$-banana. An equivalent melonic construction of depth $n-1$ may be obtained by omitting the last stage $t_n$ and altering $t_p$ such that the $k$th banana added in stage $p$ has $a-1$ extra edges than originally prescribed. In the third case, we consider a melonic construction of depth $n>1$ such that $t_n = ((1,\ldots, 1), p, k)$ corresponds to adding a string of $r$ $1$-bananas. In terms of graph operations, this is equivalent to splitting an edge of the $k$-th banana constructed in stage $p$ a total of $r-1$ times, hence letting $T'$ denote the construction given by omitting the last stage, we obtain $\mathbb{U}(T) = \mathbb{L}^{r-1}\mathbb{U}(T')$. 

The final case involves the deletion-contraction relation derived in \cite[Thm~5.3]{Deletion_Contraction}. We consider a melonic construction $T$ of depth $n>1$ such that $t_n = ((a_1,\ldots, a_r), p, k)$ corresponds to adding more than one banana, at least one of which has more than one edge. We denote the resulting graph by $G$. Letting $1 \leq m\leq r$ be such that $a_m$ is the maximum number of edges of the bananas added in stage $n$, we may consider the graph $G'$ giving by replacing the $a_m$-banana with a $1$-banana. Letting $e$ denote the single edge of the $1-$banana, $G = G'_{me}$ is the graph obtained from $G'$ by replacing $e$ with $m$-parallel edges. Note that as $T$ is reduced and $n>1$, $e$ is neither a bridge nor a looping edge of $G'$. Applying the deletion-contraction relation (\ref{eq:deletion_contraction}) then gives 
\begin{equation}
    \mathbb{U}(G'_{a_me}) = f_{a_m}\mathbb{U}(G') + g_{a_m}\mathbb{U}(G'/e) + h_{a_m}\mathbb{U}(G' \setminus e).
\end{equation}
Rewriting in terms of Grothendieck classes of melonic graphs gives
\begin{equation}
\label{eq:recursive_melonic}
    \mathbb{U}(T) = f_{a_m}\mathbb{U}(T') + g_{a_m}\mathbb{U}(T'') + \left(\prod_{i=1}^{m-1}\mathbb{B}_{a_i} \right)\left(\prod_{i=m+1}^r \mathbb{B}_{a_i} \right)h_{a_m}\mathbb{U}(T''')
\end{equation}
where $T'$ is a melonic construction for $G'$, $T''$ is a melonic construction for $G'/e$, and $T'''$ is a melonic construction obtained from $T$ be omitting $t_n$ and decreasing the order of the $k$th banana in $t_p$ by $1$. Note that the graph $G'\setminus e$ corresponds to the melonic graph obtained by taking the melonic graph constructed from $T'''$ and attaching  two strings of bananas, each at a single vertex. 

From this recursive formula for the Grothendieck class, the authors of \cite{MR4620363} proved that for any melonic construction $T$, $\mathbb{U}(T)$ could be written as a polynomial in $\mathbb{S}$ with nonnegative integer coefficients \cite[Cor~3.4]{MR4620363}. They also made the following conjecture.
\begin{conjecture}
\label{conj:log_concave_melonic}
Let $T$ be a melonic construction and $\mathbb{U}(T) = a_0 + a_1\mathbb{S}^1 + \ldots + a_n\mathbb{S}^n$ its Grothendieck class. Then, the sequence $a_0,a_1,\ldots, a_n$ is log-concave.   
\end{conjecture}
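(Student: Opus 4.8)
The plan is to prove Conjecture \ref{conj:log_concave_melonic} by induction on the depth of a reduced melonic construction $T$, using the recursion of Section \ref{sec:melonic_graphs} to reduce the statement to two ingredients: log-concavity of the base families $\mathbb{B}_n, f_n, g_n, h_n$ (which is Theorem \ref{thm:intro_recursive_LC}), and closure properties of log-concave $\mathbb{S}$-polynomials under the operations appearing in (\ref{eq:recursive_melonic}). The depth-one case gives a product $\prod_i \mathbb{B}_{a_i}$; since each $\mathbb{B}_{a_i}$ is log-concave and has strictly positive coefficients (no internal zeros) by \cite[Cor~3.4]{MR4620363} and Theorem \ref{thm:intro_recursive_LC}, it is enough to invoke the classical fact that the Cauchy product of two log-concave sequences with no internal zeros is again log-concave (closure of $\mathrm{PF}_2$ sequences under convolution). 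The ``split an edge'' and ``absorb a single banana'' cases only multiply by a power of $\mathbb{L}=\mathbb{S}+2$ or re-index a shallower construction, so they follow at once from the product case.

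The crux is the deletion--contraction step, where (\ref{eq:recursive_melonic}) writes $\mathbb{U}(T)$ as the $\mathbb{S}$-polynomial combination $f_{a_m}\mathbb{U}(T') + g_{a_m}\mathbb{U}(T'') + \bigl(\prod_{i\neq m}\mathbb{B}_{a_i}\bigr)h_{a_m}\mathbb{U}(T''')$ of classes of strictly smaller depth. By the product case each summand is log-concave, but a sum of log-concave polynomials need not be log-concave, so the real task is to show these three particular summands add to something log-concave. I would attack this by exploiting the linear relations among the base polynomials --- for instance $f_n+g_n = n(\mathbb{S}+1)^{n-1}$, $h_n=(\mathbb{S}+1)f_{n-1}$, and $\mathbb{B}_n=(\mathbb{S}+1)f_n+n(\mathbb{S}+1)^{n-1}$ --- together with the fact that $\mathbb{U}(T'),\mathbb{U}(T''),\mathbb{U}(T''')$ all arise from one smaller graph by deleting or contracting the same edge, so their supports and leading terms are tightly aligned. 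The goal would be either to force the combination to be real-rooted, or (more realistically) to strengthen the inductive hypothesis to a property genuinely closed under this operation --- e.g.\ that every melonic class is a product of base polynomials times a ``remainder'' that interlaces its deletion/contraction neighbours, so that results on sums of polynomials with a common interleaver apply.

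The step I expect to be the genuine obstacle is exactly this: proving the three-term deletion--contraction sum preserves log-concavity (or the strengthened invariant). Log-concavity fails to be additive in general, and even $f_n$ is not real-rooted in $\mathbb{S}$ (its roots are shifts of roots of unity, hence non-real for $n\ge 3$), so Newton's inequalities are unavailable and one is pushed into direct coefficient estimates. The fallback --- and the route actually carried out in this paper --- is to drop full generality and instead verify the conjecture on explicit infinite families where the recursion unrolls to a closed form: the banana graphs themselves (Theorem \ref{thm:intro_recursive_LC}), proved by expressing the $\mathbb{S}$-coefficients of $\mathbb{B}_n,f_n,g_n,h_n$ as alternating binomial sums and checking $a_k^2\ge a_{k-1}a_{k+1}$ directly; and the clasped necklaces $G'_{m,n}$, for which (\ref{eq:recursive_melonic}) yields a closed form for $\mathbb{U}(G'_{m,n})$ in terms of the base polynomials $\mathbb{B}_m, f_m, h_m$ and $\mathbb{B}_2$, reducing log-concavity to explicit (if lengthy) binomial estimates.
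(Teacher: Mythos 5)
You have not proved the statement, but neither does the paper: Conjecture \ref{conj:log_concave_melonic} is stated as a conjecture (it is Conjecture 3.4 of \cite{MR4620363}), and the paper establishes it only for two families, banana graphs (Theorem \ref{thm:f_g_h_b_LC}) and clasped necklaces (Proposition \ref{prop:clasp_necklace_LC}). Your proposal correctly locates the genuine obstruction in the general inductive strategy: in the deletion--contraction step (\ref{eq:recursive_melonic}) the class $\mathbb{U}(T)$ is a sum of three log-concave polynomials, log-concavity is not preserved under addition, and your suggested remedies (real-rootedness, a common interleaver) are only sketched --- indeed, as you note, $f_n$ is not real-rooted for $n\geq 3$, so Newton-type arguments are unavailable. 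That unresolved step is precisely why the general statement remains open, so your plan should be read as an honest reduction of the conjecture to an unproved closure property, not as a proof.

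Where your account drifts from the paper is in how the special cases are actually handled. The banana case is not proved by writing the $\mathbb{S}$-coefficients of $\mathbb{B}_n, f_n, g_n, h_n$ as alternating binomial sums and verifying $a_k^2\geq a_{k-1}a_{k+1}$ across all degrees; instead the paper argues by induction on $m$ using the one-step recursions $f_{m+1}=(s+1)f_m+(-1)^m$, $g_{m+1,n}=(s+1)g_{m,n}-(-1)^m$, $b_{m+1,n}=(s+1)b_{m,n}+(-1)^m(s+1)$, the product-closure result (Proposition \ref{prop:prod_log_concave}) applied to multiplication by $(s+1)$, and explicit checks only of the one or two lowest-degree coefficients (Section \ref{sec:f_m_coeff} and following); $h_m$ then follows from $h_m=(s+1)f_{m-1}$. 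The clasped-necklace case likewise does not reduce to lengthy binomial estimates: Lemma \ref{lem:clasp_necklace_class_T} factors $\mathbb{U}(G'_{m,n})$ as $(s+1)(s+2)b_m^{n-2}\,p_{m,n}(s)$, product-closure disposes of everything except the factor $p_{m,n}$, and $p_{m,n}$ is handled by the same trick, via $p_{m,n}=(s+1)p_{m-1,n+1}+(-1)^{m-2}(n-1)$ and a check of the degree-$1$ inequality. So the paper's method for the special cases is exactly the device your general plan lacks for sums --- but applied where the perturbation of a product by $\pm 1$ or $\pm(n-1)$ only disturbs the lowest coefficients, which is what makes it work there and not in the three-term deletion--contraction sum.
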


\subsection{Log-concavity and related properties}
We briefly recall the definitions of log-concavity and related properties, as well as some key results on the preservation of these properties under certain polynomial operations. 
\begin{definition}
    A sequence of nonnegative real numbers $a_0,\ldots, a_n$ is said to
\begin{itemize}
    \item be \textit{log-concave} (LC) if for $0<k<n$,
    \begin{equation}
    \label{eq:LC_ineq}
        a_k^2 \geq a_{k-1}a_{k+1}
    \end{equation}
    \item be \textit{ultra log-concave} (ULC) if for $0<k<n$,
\begin{equation}
\label{eq:ULC_binom_ineq}
    \frac{a_k^2}{{n \choose k}^2} \geq \frac{a_{k-1}}{{n\choose k-1}} \frac{a_{k+1}}{{n \choose k+1}}
\end{equation}
 Note that this condition (\ref{eq:ULC_binom_ineq}) is equivalent to 
\begin{equation}
\label{eq:ULC_factor}
    k(n-k)a_k^2 \geq (n-k+1)(k+1)a_{k-1}a_{k+1}.
\end{equation}
\item be \textit{unimodal} if there exists an index $k$ such that 
\begin{equation}
    a_0 \leq \ldots a_{k-1} \leq a_k \geq \ldots \geq a_n
\end{equation}
\item \textit{have no internal zeros} if there do not exist indices $0\leq i <j<k \leq n$ such that $a_i,a_k \neq 0$ but $a_j = 0$. 
\end{itemize}
\end{definition}
We note that sometimes in the literature, nonnegativity is included in the definition of log-concavity. For the purposes of this paper, we do not take nonnegativity of the $a_i$ to be a necessary condition for log-concavity but do state explicitly when we are considering sequences of nonnegative real numbers specifically. 

The above definitions are related to each other in the following ways. First, ultra log-concave is a stronger condition than log-concave. This follows from the observation that the binomial coefficient sequence ${n\choose 0},\ldots, {n \choose n}$ is log-concave. 
Log-concave sequences are not necessarily unimodal nor have no internal zeros; the sequence $1,0,0,1$ is one such example. Positive log-concave sequences are, however, unimodal. Given a sequence of positive real numbers $a_0,\ldots, a_n$ that is log-concave, if there is an index $k$ such that $a_{k-1}\geq a_k$, then by log-concavity, $a_k \cdot \frac{a_k}{a_{k-1}} \geq a_{k+1}$, hence $a_k \geq a_{k+1}$. 

In \cite{Liggett_ULC_NegDependence_ConvoProd}, Liggett defines a variation of ultra log-concavity introduced by Pemantle. A sequence of real numbers $a_0,\ldots, a_n$ is said to be \textit{ultra log-concave of order $m$}, shorthand ULC($m$), if $a_k = 0$ for $k>m$ and 
\begin{equation}
\label{eq:ULC_binom_ineq_orderm}
    \frac{a_k^2}{{m \choose k}^2} \geq \frac{a_{k-1}}{{m\choose k-1}} \frac{a_{k+1}}{{m \choose k+1}}
\end{equation}
holds for $0<k<m$.
Once again, this inequality (\ref{eq:ULC_binom_ineq_orderm}) is equivalent to 
\begin{equation}
\label{eq:ULC_factor_orderm}
    k(m-k)a_k^2 \geq (m-k+1)(k+1)a_{k-1}a_{k+1}.
\end{equation}
The limiting case, ULC($\infty$), is given by the condition 
\begin{equation}
    ka_k^2 \geq (k+1)a_{k-1}a_{k+1}
\end{equation}
for all $0<k<n$. Note that if a sequence of real numbers if ULC($m$), then it is also ULC($m+1$) as $\frac{n+1}{n+2} > \frac{n}{n+1}$ for all $n \in \mathbb{N}$.

A polynomial with real coefficients is said to be log-concave, ultra log-concave (of order $m$), unimodal, or have no internal zeros if its coefficients form a sequence with the respective property. In \cite{Stanley_LogConcave_Survey}, Stanley presents a survey of examples of log-concave sequences, properties related to log-concavity, and ways to prove certain real polynomials are log-concave. One such method is given by a result from Newton \cite[Thm~2]{Stanley_LogConcave_Survey}: if $P(x)$ is a real polynomial with real zeros, then $P(x)$ is ultra log-concave. 

A key property of (ultra) log-concavity that we use in the following sections is the behavior of (ultra) log-concavity under taking products. More precisely, the statement is as follows.
\begin{proposition}
\label{prop:prod_log_concave}
Let $f(x), g(x)$ be two real polynomials whose coefficients are nonnegative with no internal zeros. Then $f(x)g(x)$ has no internal zeros and 
\begin{enumerate}
    \item if $f(x),g(x)$ are log-concave, then $f(x)g(x)$ is log-concave.
    \item if $f(x), g(x)$ are ultra log-concave, then $f(x)g(x)$ is ultra log-concave.
    \item  if $f(x)$ is ULC($n$) and $g(x)$ is ULC($m$), then $f(x)g(x)$ is ULC($n+m$). 
\end{enumerate}
\end{proposition}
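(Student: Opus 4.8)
\emph{Proof strategy.} The plan is to reduce all four assertions to facts about convolutions of coefficient sequences and then exploit a ``two-index'' reformulation of log-concavity. Write $f(x) = \sum_i a_i x^i$ and $g(x) = \sum_j b_j x^j$ with all $a_i, b_j \geq 0$ and no internal zeros, so that the supports of $(a_i)$ and $(b_j)$ are intervals $[\alpha_1,\alpha_2]$ and $[\beta_1,\beta_2]$ on which every coefficient is strictly positive. The coefficient of $x^k$ in $f(x)g(x)$ is the convolution $c_k = \sum_i a_i b_{k-i}$, and the index $i$ in this sum ranges over the interval $[\max(\alpha_1,k-\beta_2),\min(\alpha_2,k-\beta_1)]$, which is a nonempty interval precisely when $\alpha_1+\beta_1\le k\le\alpha_2+\beta_2$ and on which both $a_i$ and $b_{k-i}$ are positive; hence $c_k>0$ exactly on $[\alpha_1+\beta_1,\alpha_2+\beta_2]$, so $f(x)g(x)$ has no internal zeros. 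This part uses nothing about log-concavity.

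For part (1), the first step is to record the standard reformulation: a nonnegative sequence $(a_i)$ with no internal zeros is log-concave if and only if $a_i a_j \geq a_{i-1}a_{j+1}$ for all $i \leq j$. The case $i = j$ is exactly log-concavity; conversely, log-concavity forces the ratios $a_{i+1}/a_i$ to be non-increasing along the support interval, so for $i < j$ one has $a_{i+1}/a_i \geq a_j/a_{j-1}$, which rearranges to the stated inequality, and the cases where $a_{i-1}$ or $a_{j+1}$ vanishes are trivial. Granting this for both $(a_i)$ and $(b_j)$, expand $c_k^2$ and $c_{k-1}c_{k+1}$ as double sums; after a change of the summation variable in the second,
\[
c_k^2 - c_{k-1}c_{k+1} = \sum_{i,j} b_{k-i}b_{k-j}\,(a_i a_j - a_{i-1}a_{j+1}).
\]
The involution $\sigma(i,j) = (j+1,\, i-1)$ of $\mathbb{Z}^2$ satisfies $a_{j+1}a_{i-1} - a_{j}a_{i} = -(a_i a_j - a_{i-1}a_{j+1})$ and has fixed locus $\{i = j+1\}$, on which the corresponding summand vanishes; pairing each two-element orbit and using the representative with $i \leq j$ gives
\[
c_k^2 - c_{k-1}c_{k+1} = \sum_{i\leq j} (a_i a_j - a_{i-1}a_{j+1})\,\bigl(b_{k-i}b_{k-j} - b_{k-i+1}b_{k-j-1}\bigr).
\]
For $i\leq j$ the first factor is $\geq 0$ by the reformulation applied to $(a_i)$, and writing $p = k-j\le q = k-i$ the second factor is $b_p b_q - b_{p-1}b_{q+1}\geq 0$ by the reformulation applied to $(b_j)$; hence every summand is nonnegative and $f(x)g(x)$ is log-concave.

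Part (2) is the case $n = \deg f$, $m = \deg g$ of part (3), so it suffices to prove (3). The plan is to pass to binomial-normalized sequences: $f$ is $\mathrm{ULC}(n)$ with no internal zeros precisely when $\widehat a_i := a_i/\binom{n}{i}$ is log-concave with no internal zeros, and likewise $\widehat b_j := b_j/\binom{m}{j}$ for $g$ (the equivalence is the reformulation of \eqref{eq:ULC_binom_ineq_orderm} via \eqref{eq:ULC_factor_orderm}). By the Vandermonde identity $\binom{n+m}{k} = \sum_{i+j=k}\binom{n}{i}\binom{m}{j}$, the normalized coefficients of $f(x)g(x)$ are the weighted averages
\[
\frac{c_k}{\binom{n+m}{k}} = \frac{\sum_{i+j=k} \binom{n}{i}\binom{m}{j}\,\widehat a_i\,\widehat b_j}{\sum_{i+j=k}\binom{n}{i}\binom{m}{j}},
\]
so (3) is precisely the statement that this ``binomial convolution'' of two log-concave sequences is again log-concave. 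This is the content of Liggett's theorem on ultra-log-concave sequences \cite{Liggett_ULC_NegDependence_ConvoProd}, which I would invoke directly; an alternative self-contained route is induction on $m$, the base case $m=1$ (so $g$ is linear) reducing, after normalization, to an explicit inequality among $\widehat a_{k-2},\ldots,\widehat a_{k+1}$ provable from the two-index reformulation together with an AM--GM estimate that uses the slack in the log-concavity of the rescaled sequences $k\,\widehat a_{k-1}$ and $(n+1-k)\,\widehat a_k$.

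I expect part (3) to be the main obstacle. Unlike part (1), the term-by-term involution argument does not survive the introduction of the binomial weights, since those weights do not factor through the convolution; and the inductive route cannot be reduced to linear factors, because a log-concave---indeed ultra-log-concave---polynomial need not have real roots (for instance $1+x+x^2$). Thus the clean path is to cite Liggett for (2) and (3), while (1) and the ``no internal zeros'' claim are established directly as above; this also matches the role the proposition plays later, where it is applied to products such as $\prod_i \mathbb{B}_{a_i}$.
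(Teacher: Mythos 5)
Your proposal is correct, and for the ultra-log-concave parts it coincides with what the paper does: the paper offers no proof of this proposition at all, but simply cites Stanley's Proposition 2 (linear-algebra proof) or Kook's Theorem 1 (elementary proof) for part (1), and Liggett's Theorem 2 for parts (2)--(3), exactly as you do for (2)--(3). Where you differ is that you supply a complete elementary argument for the ``no internal zeros'' claim and for part (1), rather than citing it; your argument is essentially the elementary convolution proof that the paper's reference to Kook points to. The details check out: the two-index reformulation $a_ia_j\geq a_{i-1}a_{j+1}$ for $i\leq j$ does require the no-internal-zeros hypothesis (the sequence $1,0,0,1$ shows it fails otherwise), and you correctly invoke it; the change of variables giving $c_{k-1}c_{k+1}=\sum_{i,j}a_{i-1}a_{j+1}b_{k-i}b_{k-j}$ is right; the map $(i,j)\mapsto(j+1,i-1)$ is an involution whose fixed locus $i=j+1$ contributes zero, and pairing orbits yields the product of two factors each nonnegative by the reformulation applied to $(a_i)$ and $(b_j)$ respectively. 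Your reduction of (2) to (3) with $n=\deg f$, $m=\deg g$ is also valid, since the paper's notion of ULC for a polynomial uses binomials indexed by its degree, and the product has degree $n+m$. The only weak spot is the sketched ``alternative self-contained route'' to (3) by induction on $m$, which, as you yourself note, cannot proceed by factoring into linear pieces and is not fleshed out; but since you explicitly take Liggett's theorem as the actual proof of (2)--(3), this does not affect correctness. In short, your version buys a self-contained proof of (1) at the cost of a page of computation, while the paper outsources everything to the literature.
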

The statement for log-concavity is proved in \cite[Prop~2]{Stanley_LogConcave_Survey} using linear algebra techniques. A more elementary proof is given in \cite[Thm~1]{WoongKook_ProductLogConcavePoly}. Liggett proves the statements for ultra log-concavity \cite[Thm~2]{Liggett_ULC_NegDependence_ConvoProd} as a statement for the convolution product of two ultra log-concave sequences that have no internal zeros. 

Both log-concavity and ultra log-concavity are also preserved under a shift of the dependent variable.
\begin{proposition}
\label{prop:LC_and_ULC_xPLUSn}
Let $f(x)$ be a real polynomial with nonnegative coefficients and no internal zeros. Then,
\begin{itemize}
    \item if $f(x)$ is log-concave, then $f(x+k)$ is log-concave for any positive integer $k$.
    \item if $f(x)$ is ultra-log-concave, then $f(x+k)$ is ultra log-concave for any positive integer $k$. 
\end{itemize}
\end{proposition}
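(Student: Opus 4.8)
The plan is to realise $f(x+k)$ as a coefficientwise limit of polynomials obtained from $f$ by one operator that transparently preserves both properties, and then to use that log-concavity and ultra-log-concavity of a fixed order are \emph{closed} conditions (their defining inequalities are non-strict). For $t\ge 0$ let $S_t$ be the linear operator $S_t(f)=f+tf'$ on real polynomials; it preserves the degree and the leading coefficient. If $\deg f=n$, then $f^{(j)}=0$ for $j>n$, so
\[
\bigl(S_{k/m}\bigr)^m f=\Bigl(1+\tfrac{k}{m}\tfrac{d}{dx}\Bigr)^m f=\sum_{j=0}^{n}\binom{m}{j}\frac{k^j}{m^j}\,f^{(j)}(x),
\]
and as $m\to\infty$ the right-hand side converges coefficientwise to $\sum_{j=0}^{n}\tfrac{1}{j!}k^j f^{(j)}(x)=f(x+k)$ by Taylor's formula, each term of the sequence still being a polynomial of degree $n$. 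Hence it suffices to prove that, for $t\ge 0$, $S_t$ sends polynomials with nonnegative coefficients and no internal zeros to polynomials of the same kind, and that on degree-$n$ such polynomials it preserves log-concavity and preserves ultra-log-concavity of order $n$; passing to the limit then gives both claims.

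Nonnegativity of coefficients is clearly preserved, and "no internal zeros" is preserved because if $f$ has support $\{m_0,\dots,n_0\}$ then $f+tf'$ has support an interval containing $\{m_0,\dots,n_0\}$ (when $m_0\ge 1$ the coefficient of $x^{m_0-1}$ in $f+tf'$ is $t\,m_0 a_{m_0}>0$). For log-concavity, write $b_i=a_i+t(i+1)a_{i+1}$; a direct expansion gives, for $0<i<n$,
\[
b_i^2-b_{i-1}b_{i+1}=\bigl(a_i^2-a_{i-1}a_{i+1}\bigr)+t(i+2)\bigl(a_ia_{i+1}-a_{i-1}a_{i+2}\bigr)+t^2(i+1)^2\bigl(a_{i+1}^2-a_ia_{i+2}\bigr)+t^2a_ia_{i+2},
\]
and every summand is $\ge 0$: for a log-concave sequence with no internal zeros one has $a_pa_q\ge a_{p-1}a_{q+1}$ whenever $p\le q$ (the ratios $a_l/a_{l-1}$ are non-increasing along the support), which covers the first three brackets, and the last summand is a product of nonnegative coefficients.

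For ultra-log-concavity of order $n$, pass to the rescaled coefficients $\hat a_i=a_i/\binom{n}{i}$, whose log-concavity is exactly the hypothesis; the coefficients of $S_tf$ then rescale to $\hat b_i=\hat a_i+u_i\hat a_{i+1}$ with $u_i=t(n-i)$. Using the two elementary identities $2u_i-u_{i-1}=u_{i+1}$ and $u_i^2-u_{i-1}u_{i+1}=t^2$ one obtains, for $0<i<n$,
\[
\hat b_i^2-\hat b_{i-1}\hat b_{i+1}=\bigl(\hat a_i^2-\hat a_{i-1}\hat a_{i+1}\bigr)+u_{i+1}\bigl(\hat a_i\hat a_{i+1}-\hat a_{i-1}\hat a_{i+2}\bigr)+u_i^2\bigl(\hat a_{i+1}^2-\hat a_i\hat a_{i+2}\bigr)+t^2\hat a_i\hat a_{i+2},
\]
again nonnegative term by term, using $u_{i+1}=t(n-i-1)\ge 0$ for $0<i<n$, the log-concavity of $(\hat a_i)$, and the same majorization inequality for $(\hat a_i)$ (which has no internal zeros, sharing the support of $f$).

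The step I expect to require the most care is the ultra-log-concave bookkeeping, and it is also what forces the uniform treatment via $S_t$. Log-concavity alone could instead be reduced to the case of strictly positive coefficients, by factoring $f=x^{m_0}g$ and combining $f(x+k)=(x+k)^{m_0}g(x+k)$ with Proposition \ref{prop:prod_log_concave}; but ultra-log-concavity is sensitive to the ambient degree and is \emph{not} preserved when one divides out a power of $x$, so that reduction is unavailable. Arranging that the cross terms in $\hat b_i^2-\hat b_{i-1}\hat b_{i+1}$ come out with the correct signs — which is precisely what the two identities for $u_i=t(n-i)$ accomplish, and which fails for the "bare" weights $t(i+1)$ appearing in $f+tf'$ before rescaling — is the only genuinely computational point, and it is short. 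Everything else is formal: the polynomials $(S_{k/m})^m f$ all have degree $n$ and the relevant property, the defining inequalities are non-strict, hence the coefficientwise limit $f(x+k)$ inherits it.
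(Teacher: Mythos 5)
Your proof is correct, and it takes a genuinely different route from the paper, which does not argue this proposition directly but cites the literature: log-concavity of $f(x+k)$ is quoted from Hoggar's theorem, and ultra-log-concavity from the Lorentzian-polynomial framework of Br\"and\'en--Huh (their Example 2.26 together with Theorem 3.4). You instead give a self-contained elementary argument: you show that the single operator $S_t(f)=f+tf'$ preserves nonnegativity, absence of internal zeros, log-concavity, and ultra-log-concavity of order $n=\deg f$ (the binomial rescaling $u_i=t(n-i)$ and the identities $2u_i-u_{i-1}=u_{i+1}$, $u_i^2-u_{i-1}u_{i+1}=t^2$ are exactly the right bookkeeping, and the two-sided inequality $a_pa_q\ge a_{p-1}a_{q+1}$ for $p\le q$ is the standard consequence of log-concavity with no internal zeros), and then you obtain $f(x+k)$ as a coefficientwise limit of $(S_{k/m})^m f$, using that $S_t$ fixes the degree and leading coefficient so the ULC order stays pinned at $n$, and that the defining inequalities are non-strict and hence closed under limits. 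I checked the expansions of $b_i^2-b_{i-1}b_{i+1}$ and $\hat b_i^2-\hat b_{i-1}\hat b_{i+1}$ and the boundary cases $i=n-1$ (where $a_{n+1}=0$, $u_n=0$); everything is in order. What each approach buys: the paper's citations are short and situate the ULC statement inside the general Lorentzian theory, which gives preservation under a much wider class of operations; your argument avoids that machinery entirely, proves the stronger intermediate fact that $f\mapsto f+tf'$ preserves both properties on nonnegative polynomials with no internal zeros, and visibly works for any real shift $k\ge 0$, not just positive integers. Your remark that the reduction $f=x^{m_0}g$ combined with Proposition \ref{prop:prod_log_concave} would handle log-concavity but not ultra-log-concavity (which is sensitive to the ambient degree) is also accurate and explains why the uniform operator treatment is the right choice.
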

See \cite[Thm~2]{Hoggar_Chromative_Poly_Log_Concavity} for the statement on log-concavity. The statement for ultra log-concavity follows from Example 2.26 and Theorem 3.4 of \cite{BrandenHuh_Lorentzian}. 

\section{Recursive Polynomials}
\label{sec:recursive_polynomials}
As partial progress towards proving Conjecture \ref{conj:log_concave_melonic}, we start by focusing on what we refer to as the four recursive polynomials involved in the recursive relation (\ref{eq:recursive_melonic}), namely $f_m(\mathbb{S}), g_m(\mathbb{S}), h_m(\mathbb{S}), \mathbb{B}_m$ for $m \geq 1$. Written in terms of the indeterminate $s$ (as a stand-in for $\mathbb{S}$), they are (for $m \geq 1$):
\begin{equation}
\label{eq:f_m(s)}
    f_m(s) := \frac{(s+1)^m-(-1)^m}{s+2} 
\end{equation}
\begin{equation}
\label{eq:g_m(s)}
    g_m(s) := m(s+1)^{m-1} - \frac{(s+1)^m-(-1)^m}{s+2}
\end{equation}
\begin{equation}
\label{eq:h_m(s)}
    h_m(s) := \frac{(s+1)^m + (-1)^m(s+1)}{s+2} 
\end{equation}
\begin{equation}
\label{eq:b_m(s)}
    b_m(s) := (s+1)\frac{(s+1)^m-(-1)^m}{s+2} + m(s+1)^{m-1} 
\end{equation}
Following equation (5.16) of \cite{Deletion_Contraction}, we also define
\begin{equation}
    f_0(s) = 0 \quad\quad g_0(s) = 0 \quad\quad h_0(s) = 1 \quad\quad b_0(s) = 0
\end{equation}
We may then make the following observations to rewrite the polynomials in forms that are easier to work with. First,
\begin{equation}
\label{eq:f_m_expansion}
    f_m(s) = \frac{(s+1)^m - (-1)^m}{(s+1) - (-1)} = \sum_{k=0}^{m-1} (s+1)^k(-1)^{m-1-k}. 
\end{equation}
The coefficients of $f_m(s)$ are given in Claim 3.5 of \cite{MR4620363}. Both $g_m, b_m$ may be written in terms of $f_m$, given below:
\begin{equation}
\label{eq:g_m(s)_in_f_m(s)}
    g_m(s) = m(s+1)^{m-1} - f_m(s)
\end{equation}
\begin{equation}
\label{eq:b_m(s)_in_f_m(s)}
    b_m(s) = m(s+1)^{m-1} + (s+1)f_m
\end{equation}
Finally, for $m\geq 1$, $h_m$ may be written in terms of $f_{m-1}$, as follows:
\begin{equation}
    \label{eq:h_m(s)_in_f_m(s)}
    h_m(s) = (s+1)f_{m-1}(s).
\end{equation}

\subsection{Log-concavity of the recursion polynomials}
The goal of this subsection is to prove Theorem \ref{thm:intro_recursive_LC}. In what follows, we say a polynomial  $p(s) = a_ns^n + \ldots + a_0 \in \mathbb{Z}[s]$ of degree $n$ has positive coefficients if $a_i>0$ for $0\leq i\leq n$. Note that this is equivalent to the condition that all coefficients are nonnegative, the constant term is positive, and the polynomial has no internal zeroes. 

To make use of Proposition \ref{prop:prod_log_concave}, we first derive a recursive formula for $f_m$. For all $m \geq 0$, we claim that 
\begin{equation}
\label{eq:f_m+1_from_f_m}
    f_{m+1}(s) = (s+1)f_m + (-1)^m.
\end{equation}
Observe that 
\begin{align*}
    f_{m+1}(s) &= \sum_{k=0}^m (s+1)^k(-1)^{m-k}
    = \sum_{k=1}^m (s+1)^k(-1)^{m-k} + (-1)^m\\
    &= (s+1)\sum_{k=0}^{m-1} (s+1)^k(-1)^{m-1-k} + (-1)^m = (s+1)f_m + (-1)^m,
\end{align*}
so indeed (\ref{eq:f_m+1_from_f_m}) holds. 

\begin{lemma}
\label{lem:f_m_LC}
For all $m\geq 1$, $f_m$ is a degree $m-1$ log-concave polynomial with nonnegative integral coefficients and no internal zeros. More precisely, for $m\geq 2$, all coefficients of degree $1\leq k\leq m-1$ are strictly positive. 
\end{lemma}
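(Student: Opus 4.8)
The plan is to induct on $m$, using the recursion $f_{m+1}(s) = (s+1)f_m(s) + (-1)^m$ from (\ref{eq:f_m+1_from_f_m}) together with the product fact (Proposition \ref{prop:prod_log_concave}) applied to $(s+1)$, which is itself log-concave with positive coefficients and no internal zeros. The base cases are direct: $f_1(s) = 1$ (degree $0$, trivially log-concave), and $f_2(s) = s+2$ (degree $1$, positive coefficients), and one may as well check $f_3(s) = s^2 + 3s + 3$ by hand to have room for the inductive step. The degree count is immediate from (\ref{eq:f_m_expansion}): the leading term is $(s+1)^{m-1}$, so $\deg f_m = m-1$.

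For the inductive step, suppose $f_m$ is log-concave of degree $m-1$ with nonnegative coefficients, all of degree $1 \le k \le m-1$ strictly positive, and constant term $f_m(0) = \pm$ something nonzero (in fact $f_m(0) = \frac{1 - (-1)^m}{2}$, which is $0$ when $m$ is even and $1$ when $m$ is odd — this is the one subtlety). Write $g(s) := (s+1)f_m(s)$. By Proposition \ref{prop:prod_log_concave}(1), $g$ is log-concave with nonnegative coefficients and no internal zeros; moreover since $f_m$ has all coefficients in degrees $1,\dots,m-1$ strictly positive, $g = (s+1)f_m$ has all coefficients in degrees $1,\dots,m$ strictly positive (the degree-$k$ coefficient of $g$ is $[f_m]_{k-1} + [f_m]_k$, a sum of nonnegatives with at least one positive when $1 \le k \le m$). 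Then $f_{m+1} = g + (-1)^m$ differs from $g$ only in the constant term, by $\pm 1$.

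The main obstacle — really the only nontrivial point — is controlling what happens to log-concavity at the bottom of the sequence when we perturb the constant term by $(-1)^m$. Two cases: if $m$ is odd, we add $+1$ to the constant term of $g$, which only helps (increasing $a_0$ weakens the inequality $a_1^2 \ge a_0 a_2$... wait, it makes $a_0 a_2$ larger) — so one must check this carefully; here $g(0) = f_m(0) = 1$ since $m$ odd, so $f_{m+1}(0) = 2$, and one needs $[f_{m+1}]_1^2 \ge 2\,[f_{m+1}]_2$, which should follow from an explicit lower bound on $[f_{m+1}]_1$ obtained from (\ref{eq:f_m_expansion}) (namely $[f_{m+1}]_1 = \sum_{k=1}^{m}k(-1)^{m+1-k}\cdot(\text{binomials})$, computable in closed form, or more cleanly from the recursion $[f_{m+1}]_1 = [f_m]_0 + [f_m]_1$ and an induction tracking that these first coefficients grow). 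If $m$ is even, $g(0) = f_m(0) = 0$ but $g$ has no internal zeros and $[g]_1 > 0$, so actually $[g]_0 = 0$ forces $g = (s+1)f_m$ with $f_m(0)=0$; then $f_{m+1}(0) = g(0) + 1 = 1 > 0$, and we need $[f_{m+1}]_1^2 \ge 1 \cdot [f_{m+1}]_2$ and all higher inequalities, which are inherited from $g$ (changing $a_0$ from $0$ to $1$ only tightens the single inequality at $k=1$). In both cases it comes down to one explicit estimate on the first two nonzero coefficients of $f_{m+1}$, which I would extract either from a closed form for $[f_m]_1, [f_m]_2$ via (\ref{eq:f_m_expansion}) or from a parallel induction on those coefficients; the higher-index log-concavity inequalities transfer verbatim from $g$ since $f_{m+1}$ and $g$ agree there. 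Finally, strict positivity of coefficients in degrees $1 \le k \le m$ for $f_{m+1}$ was already established for $g$ and is unaffected by the constant-term shift, completing the induction.
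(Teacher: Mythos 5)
Your overall strategy is the same as the paper's: induct via the recursion $f_{m+1}=(s+1)f_m+(-1)^m$, use Proposition \ref{prop:prod_log_concave} to handle all log-concavity inequalities except the bottom one, and settle the degree-$1$ inequality by an explicit coefficient computation. However, the execution has concrete errors. First, your base cases are wrong: $f_2(s)=\frac{(s+1)^2-1}{s+2}=s$ and $f_3(s)=s^2+s+1$, not $s+2$ and $s^2+3s+3$ (you appear to have divided by $s$ rather than $s+2$); your values even contradict your own observation that $f_m(0)=\frac{1-(-1)^m}{2}\in\{0,1\}$. Second, and more seriously, your parity case analysis has the sign of the perturbation reversed: for $m$ odd one has $(-1)^m=-1$, so $f_{m+1}(0)=f_m(0)-1=0$, not $2$. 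The inequality $[f_{m+1}]_1^2\ge 2\,[f_{m+1}]_2$ you propose to verify in that case is not only unnecessary but false for the actual polynomials (e.g.\ $f_6$ has $a_1=3$, $a_2=6$), so carrying out your plan as written would fail there; with the correct sign that case is trivial because $a_0=0$.

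The genuinely delicate case is the one you treat too lightly: when $m$ is even, the constant term jumps from $0$ (in $(s+1)f_m$) to $1$ (in $f_{m+1}$), and the inequality $a_1^2\ge a_0a_2=a_2$ at $k=1$ is \emph{not} inherited from $(s+1)f_m$, where it read $a_1^2\ge 0$. Moreover it holds only with equality: for $f_{m+1}$ of odd index one has $a_1=\frac{m}{2}$ and $a_2=\frac{m^2}{4}$, so $a_1^2-a_0a_2=0$. This means the ``explicit lower bound on $[f_{m+1}]_1$'' you defer to cannot be a rough estimate; you need the exact closed forms of the first few coefficients (as the paper computes in its appendix, $a_1=\frac{m-1}{2}$ or $\frac{m}{2}$ and $a_2=\frac{(m-1)^2}{4}$ or $\frac{m(m-2)}{4}$ according to parity), which give $a_1^2-a_0a_2=0$ in the odd case and $\frac{m^2}{4}>0$ in the even case. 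So the skeleton of your argument matches the paper, but the step that constitutes the actual proof is left undone, and the case where you do attempt a check is set up incorrectly because of the sign error.
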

\begin{proof}
The statement on the degree of $f_m$ is clear from (\ref{eq:f_m_expansion}).
From (\ref{eq:f_m+1_from_f_m}), it is easy to see that the constant term is either $0$ or $1$ depending on the parity of $f$; see also Section \ref{sec:f_m_coeff}. Similarly, having established nonnegativity of the constant term, by induction using (\ref{eq:f_m+1_from_f_m}), the nonconstant coefficients are positive. Note that the positivity condition on the coefficients implies no internal zeroes.

We prove log-concavity by induction on $m$. Key to this will be the recursive formula (\ref{eq:f_m+1_from_f_m}) together with Proposition \ref{prop:prod_log_concave}. The first couple cases $f_1 = 1, f_2 = s$ are all trivially log-concave as monomials.  Now, let $m\geq 3$ and assume that $f_{m-1}$ is log-concave with positive nonconstant coefficients.  Then, by Proposition \ref{prop:prod_log_concave}, $(s+1)f_{m-1}$ is also log-concave with positive nonconstant coefficients. Let $a_k$ denote the degree $k$ coefficient of $f_m$, so $a_k>0$ for $1\leq k\leq m-1$ and $a_k^2 \geq a_{k+1}a_{k-1}$ for $2\leq k\leq m-1$. It thus suffices to check the log-concavity condition for the degree $1$ coefficient of $f_{m}$.
The coefficients of $f_m$ up through degree 4 are computed in terms of $m$ in Section \ref{sec:f_m_coeff}. 
Denoting by $a_k$ the degree $k$ coefficient, we check that 
\begin{equation*}
a_1^2 - a_0a_2 =
    \begin{cases}
     \frac{(m-1)^2}{4} - \frac{(m-1)^2}{4} = 0 & m \text{ is odd}  \\
     \frac{m^2}{4} > 0 & m \text{ is even}
    \end{cases}.
\end{equation*}
Thus, $f_m$ is log-concave.   
\end{proof}

\begin{corollary}
\label{cor:h_m_LC}
For all $m \geq 1$, $h_m$ is a degree $m-1$ log-concave polynomial with nonnegative integer coefficients and no internal zeros.     
\end{corollary}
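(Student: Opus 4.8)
The plan is to reduce the corollary entirely to Lemma \ref{lem:f_m_LC} by exploiting the identity \eqref{eq:h_m(s)_in_f_m(s)}, namely $h_m(s) = (s+1)f_{m-1}(s)$. Since the substantive work of proving that $f_{m-1}$ is log-concave with nonnegative integer coefficients and no internal zeros has already been carried out, the corollary should follow immediately from the stability of log-concavity under multiplication established in Proposition \ref{prop:prod_log_concave}.

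Concretely, I would first dispose of the boundary case $m=1$: here $f_0 = 0$, so $h_1 = (s+1)f_0 = 0$, which is vacuously log-concave with no internal zeros and has (vacuously) nonnegative integer coefficients; one should simply note that the ``degree $m-1$'' claim is read with the usual convention for the zero polynomial, or restrict the nontrivial content to $m\ge 2$. For $m\ge 2$, Lemma \ref{lem:f_m_LC} gives that $f_{m-1}$ is a degree $m-2$ log-concave polynomial with nonnegative integer coefficients and no internal zeros, while the linear polynomial $s+1$ plainly has nonnegative integer coefficients, no internal zeros, and is log-concave. Hence Proposition \ref{prop:prod_log_concave}(1) applies to the product $(s+1)f_{m-1}$, yielding that $h_m = (s+1)f_{m-1}$ is log-concave and has no internal zeros; its coefficients are manifestly nonnegative integers, and its degree is $1 + (m-2) = m-1$, as asserted.

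I do not expect a genuine obstacle here: the corollary is a direct consequence of Lemma \ref{lem:f_m_LC} together with the product-closure of log-concavity. The only points requiring a word of care are the degenerate case $m=1$ (where $h_1$ vanishes) and the trivial verification that $s+1$ satisfies the hypotheses of Proposition \ref{prop:prod_log_concave}. For completeness one could also remark that an identical argument, applied to Proposition \ref{prop:prod_log_concave}(2)--(3), would transfer whatever ultra-log-concavity properties $f_{m-1}$ enjoys to $h_m$, which will presumably be needed for the later ultra-log-concavity results.
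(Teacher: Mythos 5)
Your proposal is correct and matches the paper's own proof: both reduce the claim to Lemma \ref{lem:f_m_LC} via the identity $h_m=(s+1)f_{m-1}$ and invoke Proposition \ref{prop:prod_log_concave}, with the degree statement read off from the factorization. Your extra remark on the degenerate case $m=1$ (where $h_1=0$) is a reasonable clarification but not a departure from the paper's argument.
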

\begin{proof}
As $f_m$, $(s+1)$ are log-concave with nonnegative integer coefficients and no internal zeros for all $m$, by Proposition \ref{prop:prod_log_concave}, (\ref{eq:h_m(s)_in_f_m(s)}) tells us that $h_m$ is also log-concave with nonnegative integer coefficients and no internal zeros for all $m \geq 1$. The statement on degree follows from Lemma \ref{lem:f_m_LC} and (\ref{eq:h_m(s)_in_f_m(s)}). 
\end{proof}

\begin{lemma}
\label{lem:g_m_LC}
For all $m \geq 1$, $g_m$ is a degree $m-1$ log-concave polynomial with positive integer coefficients.   
\end{lemma}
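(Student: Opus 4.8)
The plan is to realize $g_m$ as a product of two log-concave polynomials, in the same spirit as Corollary~\ref{cor:h_m_LC}. The key observation is that $g_m$ is, up to the factor $s+2$, the derivative of $f_m$: differentiating the identity $(s+2)f_m(s)=(s+1)^m-(-1)^m$ in $s$ gives $f_m(s)+(s+2)f_m'(s)=m(s+1)^{m-1}$, and therefore
\begin{equation*}
 g_m(s)=m(s+1)^{m-1}-f_m(s)=(s+2)\,f_m'(s).
\end{equation*}
One can also arrive at this by checking $g_m(-2)=0$, using $f_m(-2)=m(-1)^{m-1}$, which shows $s+2\mid g_m$, and then identifying the quotient as $f_m'$.

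With this in hand I would record the elementary fact that differentiation preserves log-concavity: if $p(x)=\sum_k a_k x^k$ has nonnegative coefficients, no internal zeros, and is log-concave, then $p'(x)=\sum_j (j+1)a_{j+1}x^j$ has the same three properties, because $((j+1)a_{j+1})^2=(j+1)^2 a_{j+1}^2\ge (j+1)^2 a_j a_{j+2}\ge j(j+2)a_j a_{j+2}$, using $(j+1)^2\ge j(j+2)$. By Lemma~\ref{lem:f_m_LC}, $f_m$ is log-concave with nonnegative integer coefficients, no internal zeros, and, for $m\ge 2$, strictly positive coefficients in degrees $1,\dots,m-1$; hence $f_m'$ is log-concave with strictly positive integer coefficients in degrees $0,\dots,m-2$. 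Since $s+2$ has positive integer coefficients and is trivially log-concave, Proposition~\ref{prop:prod_log_concave}(1) shows that $g_m=(s+2)f_m'$ is log-concave with no internal zeros; moreover the convolution of two coefficient sequences that are positive on full intervals is positive on the full interval $0,\dots,m-1$, so $g_m$ has positive integer coefficients, and its degree is $1+(m-2)=m-1$, as claimed. The small cases $m=1$ (where $g_1=0$) and $m=2$ (where $g_2=s+2$) are checked by hand.

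I expect no real obstacle: the argument collapses as soon as one spots $g_m=(s+2)f_m'$, after which it is just Lemma~\ref{lem:f_m_LC}, the ``derivative preserves log-concavity'' fact, and Proposition~\ref{prop:prod_log_concave}. The one point that wants a little attention is that the constant coefficient of $f_m'$, namely the coefficient of $s$ in $f_m$, is strictly positive; for $m\ge 2$ this is precisely the refined positivity clause in Lemma~\ref{lem:f_m_LC}, and it is what guarantees that $g_m$ has no internal zeros and hence positive coefficients throughout.
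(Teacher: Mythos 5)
Your proof is correct, but it follows a genuinely different route from the paper. You differentiate $(s+2)f_m(s)=(s+1)^m-(-1)^m$ to get the closed identity $g_m=(s+2)f_m'$, then combine Lemma \ref{lem:f_m_LC} (including its refinement that the coefficients of $f_m$ in degrees $1,\dots,m-1$ are strictly positive for $m\geq 2$, which is exactly what makes $f_m'$ positive in all degrees $0,\dots,m-2$) with the elementary fact that differentiation preserves log-concavity --- your inequality $(j+1)^2a_{j+1}^2\geq (j+1)^2a_ja_{j+2}\geq j(j+2)a_ja_{j+2}$ is the right verification --- and Proposition \ref{prop:prod_log_concave}; positivity of the coefficients, the degree count $1+(m-2)=m-1$, and the small cases $g_1=0$, $g_2=s+2$ all come out directly, and I confirm the identity itself (e.g.\ $g_3=2s^2+5s+2=(s+2)(2s+1)=(s+2)f_3'$). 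The paper instead introduces the auxiliary family $g_{m,n}(s)=n(s+1)^{m-1}-f_m$, derives the recursion $g_{m+1,n}=(s+1)g_{m,n}-(-1)^m$ in (\ref{eq:g_m_n_recursive}), and runs an induction on $m$ modeled on the proof of Lemma \ref{lem:f_m_LC}, with the degree-$1$ log-concavity inequality checked by the explicit coefficient formulas of Section \ref{sec:g_m_n_coeff}, finally specializing $n=m$. Your argument is shorter and avoids the appendix computations entirely, since positivity and the low-degree check are absorbed into the derivative identity; the paper's approach has the advantage that the family $g_{m,n}$ and its recursion are reused later (Lemma \ref{lem:g_m_n_ULC_3} and the ULC analysis), so the extra machinery is not wasted there, but for this lemma alone your derivative factorization is the cleaner proof.
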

\begin{proof}
We adapt the method used to prove Lemma \ref{lem:f_m_LC}. It is clear that $g_1=0$ satisfies the statement, so it suffices to consider $m \geq 2$.  
First, for all $n\geq 1,m\geq 1$, we define
\begin{equation}
\label{eq:g_m_n_def}
    g_{m,n}(s) = n(s+1)^{m-1}- f_m.
\end{equation}
Observe that then
\begin{equation}
\label{eq:g_m_n_recursive}
    g_{m+1,n}(s) = n(s+1)^m - f_{m+1} = (s+1)g_{m,n} - (-1)^m.
\end{equation}
Note that for $n\geq 2$,
\begin{equation*}
    g_{1,n} = n - f_1 = n-1 \quad\quad g_{2,n} = n(s+1) - f_2 = (n-1)s + n
\end{equation*}
are both log-concave with positive integer coefficients. We fix $n \geq 2$ and proceed by induction on $m$ to show that $g_{m,n}$ is a log-concave polynomial of degree $m-1$ with positive integer coefficients for all $m \geq 2$. The case $m=2$ is given above. Let $m\geq 3$. Assume $g_{m-1,n}$ is a log-concave polynomial of degree $m-2$ with positive integer coefficients. The product $(s+1)g_{m-1,n}$ is then degree $m-1$ with positive integer coefficients which are log-concave by Proposition \ref{prop:prod_log_concave}. From (\ref{eq:g_m(s)_in_f_m(s)}), the constant term of $g_{m,n}$ is at least $n-1>0$. It then suffices to check that the degree $1$ coefficient of $g_{m,n}$ satisfies the log-concave inequality, which is done in Section \ref{sec:g_m_n_coeff}. 

Taking $n=m$ gives the statement for $g_m = g_{m,m}$ for $m\geq 2$. 
\end{proof}

\begin{lemma}
\label{lem:b_m_LC}
For all $m \geq 1$, $b_m$ is a degree $m$ log-concave polynomial with positive integer coefficients. 
\end{lemma}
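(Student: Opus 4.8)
The plan is to mirror the inductive arguments behind Lemmas \ref{lem:f_m_LC} and \ref{lem:g_m_LC}. Just as $g_m$ was handled through the two-parameter family $g_{m,n}$, I would introduce, for $n\geq 1$ and $m\geq 1$,
\[
    b_{m,n}(s) := n(s+1)^{m-1} + (s+1)f_m(s),
\]
so that $b_m = b_{m,m}$ by (\ref{eq:b_m(s)_in_f_m(s)}). Since $(s+1)f_m$ has degree $m$ with leading coefficient $1$ while $n(s+1)^{m-1}$ has degree $m-1$, the polynomial $b_{m,n}$ has degree $m$ with leading coefficient $1$, which already settles the degree statement. Substituting the recursion (\ref{eq:f_m+1_from_f_m}) for $f_{m+1}$ into the definition yields the clean recursion
\[
    b_{m+1,n}(s) = (s+1)\,b_{m,n}(s) + (-1)^{m}(s+1),
\]
the analogue of (\ref{eq:g_m_n_recursive}) except that the correction term now has degree one rather than being a constant.

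Next I would fix $n\geq 1$ and induct on $m$, taking $b_{1,n}(s) = s+(n+1)$ as the base case (degree $1$, positive integer coefficients, log-concave vacuously). For $m\geq 2$, assume $b_{m-1,n}$ is log-concave of degree $m-1$ with positive integer coefficients. By Proposition \ref{prop:prod_log_concave}, $(s+1)b_{m-1,n} = \sum_{k=0}^{m} c_k s^k$ is log-concave of degree $m$ with all $c_k>0$, and by the recursion $b_{m,n} = \sum_{k=0}^{m} a_k s^k$ with $a_k = c_k$ for $k\geq 2$ and $a_0 = c_0 + (-1)^{m-1}$, $a_1 = c_1 + (-1)^{m-1}$. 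Positivity of all $a_k$ is immediate from the closed form of $b_{m,n}$: the constant term is $n+f_m(0)\geq n\geq 1$, each coefficient in degrees $1,\dots,m-1$ receives the strictly positive contribution $n\binom{m-1}{k}$ from $n(s+1)^{m-1}$, and the top coefficient is $1$. Because the correction $(-1)^{m-1}(s+1)$ touches only degrees $0$ and $1$, the inequality $a_k^2\geq a_{k-1}a_{k+1}$ at every index $k\geq 3$ is inherited from $(s+1)b_{m-1,n}$; so is the inequality at $k=2$ when $m$ is even, since there $a_1 = c_1-1\leq c_1$ and $a_2^2=c_2^2\geq c_1c_3\geq a_1 a_3$. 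The statement thus reduces to verifying log-concavity at $k=1$ (for both parities of $m$) and at $k=2$ (for $m$ odd).

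These finitely many inequalities I would check by an explicit low-degree computation in the spirit of Sections \ref{sec:f_m_coeff} and \ref{sec:g_m_n_coeff}: using the already-tabulated coefficients of $f_m$ in degrees $\leq 2$, one writes $a_0,a_1,a_2,a_3$ of $b_{m,n}$ as explicit expressions in $m$ and $n$, separates the two parities of $m$, and confirms the resulting polynomial inequalities in $m\geq 2$, $n\geq 1$. Specializing $n=m$ then gives $b_m=b_{m,m}$ and completes the proof. The main obstacle — essentially the only place this is more work than Lemma \ref{lem:g_m_LC} — is precisely the degree-one correction term: it forces control of two log-concavity inequalities near the bottom of the coefficient sequence rather than one, and requires checking that subtracting $(s+1)$ in the even case does not destroy the positivity of $a_0$ and $a_1$ (it does not: for $m$ even, $a_0 = n\geq 1$ and $a_1 = n(m-1)+\tfrac{m}{2}\geq 1$). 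Everything else runs exactly parallel to the earlier lemmas.
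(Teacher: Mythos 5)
Your proposal mirrors the paper's proof of Lemma \ref{lem:b_m_LC} almost exactly: the same auxiliary family $b_{m,n}(s)=n(s+1)^{m-1}+(s+1)f_m$, the same recursion $b_{m,n}=(s+1)b_{m-1,n}+(-1)^{m-1}(s+1)$, induction using Proposition \ref{prop:prod_log_concave}, reduction to explicit checks of the degree-$1$ and degree-$2$ coefficient inequalities (which the paper carries out in Section \ref{sec:b_m_n_coeff}), and finally specializing $n=m$. Your additional observation that the degree-$2$ inequality is inherited automatically when $m$ is even is a correct minor refinement, but the approach is essentially the paper's.
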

\begin{proof}
We take a similar approach as for $g_m$. 

For integers $n, m \geq 1$, we define
\begin{equation}
\label{eq:b_m_n_def}
    b_{m,n}(s) = n(s+1)^{m-1} + (s+1)f_m
\end{equation}
Observe that then
\begin{equation}
\label{eq:b_m_n_recursive}
    b_{m+1, n}(s) = n(s+1)^m + (s+1)f_{m+1} = (s+1)b_{m,n} + (-1)^m(s+1)
\end{equation}
Note that 
\begin{equation*}
    b_{1,n} = n + (s+1)f_1 = s+ n+1 \quad\quad b_{2,n} = n(s+1) + (s+1)f_2 = s^2 + (n+1)s +n
\end{equation*}
are both log-concave with positive integer coefficients. In fact, as $f_m$ is degree $m-1$ with nonnegative integer coefficients by Lemma \ref{lem:f_m_LC}, $b_{m,n}$ is degree $m$ with positive integer coefficients for all $n,m\geq 1$.  

We prove by induction on $m$ that $b_{m,n}$ is log-concave for $m,n \geq 1$. Let $n\geq 1, m\geq 3$ and assume that $b_{m-1,n}$ is log-concave. Then, $(s+1)b_{m-1,n}$ is also log-concave, so by (\ref{eq:b_m_n_recursive}), to show that $b_{m,n}$ is log-concave, it suffices to check that the degree 2 and degree 1 coefficients both satisfy the log-concave inequality. We do this by explicit computations, given in Section \ref{sec:b_m_n_coeff}. 
We thus see that $b_{m,n}$ is log-concave, completing the inductive proof. Taking $n=m$ gives the desired result for $b_m = b_{m,m}$, $m\geq 1$. 
\end{proof}

Altogether, we have the following result. 
\begin{theorem}
\label{thm:f_g_h_b_LC}
For all $m\geq 0$, $f_m, g_m, h_m,$ and $b_m$ are log-concave with nonnegative integer coefficients and no internal zeros.     
\end{theorem}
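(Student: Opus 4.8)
The plan is to assemble the statement from the four results already established in this section, with only the degenerate case $m=0$ needing separate attention.

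First I would dispose of $m=0$. By the conventions fixed just after (\ref{eq:b_m(s)}), we have $f_0=g_0=b_0=0$ and $h_0=1$. The zero polynomial and a nonzero constant each have nonnegative integer coefficients, have no internal zeros, and are vacuously log-concave, since the defining inequality (\ref{eq:LC_ineq}) ranges over indices $0<k<n$, an empty range when the polynomial has degree $\leq 1$ or is identically zero. So the case $m=0$ is immediate.

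For $m\geq 1$ I would simply collect the conclusions of the preceding results. Lemma \ref{lem:f_m_LC} gives that $f_m$ is log-concave with nonnegative integer coefficients and no internal zeros; Corollary \ref{cor:h_m_LC} gives the same for $h_m$; Lemma \ref{lem:g_m_LC} shows $g_m$ is log-concave with positive integer coefficients; and Lemma \ref{lem:b_m_LC} shows the same for $b_m$. The only remark needed is that ``positive integer coefficients'' is strictly stronger than ``nonnegative integer coefficients with no internal zeros'': a polynomial all of whose coefficients, from the constant term through the leading term, are positive has no internal zeros. Hence $g_m$ and $b_m$ satisfy the weaker conclusion asserted in the theorem, and we are done.

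I do not expect any genuine obstacle here; the theorem is a bookkeeping corollary of the lemmas proved above. The real content lies upstream, in the inductive arguments for Lemmas \ref{lem:f_m_LC}, \ref{lem:g_m_LC}, and \ref{lem:b_m_LC}: after peeling off a factor of $(s+1)$ via the recursions (\ref{eq:f_m+1_from_f_m}), (\ref{eq:g_m_n_recursive}), (\ref{eq:b_m_n_recursive}) and invoking Proposition \ref{prop:prod_log_concave}, one is reduced to checking the log-concavity inequality only at the lowest-degree coefficients (degree $1$, and additionally degree $2$ for $b_m$), and those checks rely on the explicit coefficient formulas deferred to Section \ref{sec:recursive_polynomials}'s companion computations. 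Had I been proving those lemmas from scratch, the key structural point I would be careful about is running the inductions for $g_m$ and $b_m$ through the auxiliary families $g_{m,n}$ and $b_{m,n}$, so that the inductive hypothesis is strong enough to survive multiplication by $(s+1)$, and only specializing $n=m$ at the end.
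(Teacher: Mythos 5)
Your proposal is correct and matches the paper, which gives no separate argument for this theorem: it is stated with the phrase ``Altogether, we have the following result'' as a direct summary of Lemma \ref{lem:f_m_LC}, Corollary \ref{cor:h_m_LC}, Lemma \ref{lem:g_m_LC}, and Lemma \ref{lem:b_m_LC}, exactly as you assemble it (your explicit handling of $m=0$ via the stated conventions is a harmless addition).
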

\begin{remark}
Note that the result for banana graphs also establishes the same result for any string of bananas by the multiplicative property of the Grothendieck class of a graph and Proposition \ref{prop:prod_log_concave}.  
\end{remark}
For completion, we reprove Corollary 3.4 of \cite{MR4620363}. 
\begin{corollary}
\label{cor:melonic_no_int_zeros}
Let $T$ be a melonic construction. Suppose the graph obtained from $T$ has $n$ edges. We may then write $\mathbb{U}(T) = P(\mathbb{S})$ for some  degree $n$ polynomial $P(t)\in \mathbb{Z}[t]$ with positive coefficients. 
\end{corollary}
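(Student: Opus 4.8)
The plan is to run the case analysis for $\mathbb{U}(T)$ recalled in Section~\ref{sec:melonic_graphs}, following the strategy of \cite{MR4620363} but now feeding in the sharper information on $f_m,g_m,h_m,b_m$ obtained above. I would argue by induction on the pair $(n,d)$, ordered lexicographically with $n$ dominant, where $n$ is the number of edges of the graph $G$ underlying $T$ and $d$ is the depth of $T$; we may assume $T$ is reduced by \cite[Lem~2.1]{MR4620363} (passing to an equivalent reduced construction at later stages if needed). The assertion to preserve is that $\mathbb{U}(T)=P(\mathbb{S})$ with $P(t)\in\mathbb{Z}[t]$ of degree exactly $n$ and all coefficients \emph{positive}; recall this is equivalent to nonnegative coefficients plus a positive constant term plus no internal zeros, and that a product of polynomials with positive coefficients again has positive coefficients.

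In the base case $d=1$, $T=((a_1,\dots,a_r),0,1)$ and $\mathbb{U}(T)=\prod_{i=1}^r\mathbb{B}_{a_i}$; by Lemma~\ref{lem:b_m_LC} each $\mathbb{B}_{a_i}=b_{a_i}(\mathbb{S})$ has degree $a_i$ with positive integer coefficients, so $P$ has positive integer coefficients and degree $\sum_i a_i=n$. For $d>1$ there are three sub-cases. If $t_n=((a),p,k)$ adds a single banana, $T$ is equivalent to a construction of depth $d-1$ on the same graph, and the inductive hypothesis applies at $(n,d-1)<_{\mathrm{lex}}(n,d)$. If $t_n=((1,\dots,1),p,k)$ with $r\ge2$ ones, then $\mathbb{U}(T)=\mathbb{L}^{r-1}\mathbb{U}(T')=(\mathbb{S}+2)^{r-1}\mathbb{U}(T')$, where the graph of $T'$ has $n-(r-1)<n$ edges; since $(\mathbb{S}+2)^{r-1}$ has positive coefficients (degree $r-1$) and, by the inductive hypothesis, so does $\mathbb{U}(T')$ (degree $n-(r-1)$), the product again has positive coefficients of degree $n$.

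The remaining case, $t_n=((a_1,\dots,a_r),p,k)$ with $r\ge2$ and $a_m:=\max_i a_i\ge2$, is the heart of the matter, handled with the deletion--contraction recursion (\ref{eq:recursive_melonic}),
\[
\mathbb{U}(T)=f_{a_m}\,\mathbb{U}(T')+g_{a_m}\,\mathbb{U}(T'')+\left(\prod_{i\neq m}\mathbb{B}_{a_i}\right)h_{a_m}\,\mathbb{U}(T''').
\]
The graphs $G'$ (replace the $a_m$-banana by a single edge), $G'/e$ (then contract that edge), and the graph underlying $T'''$ (whose construction omits the edge-increasing stage $t_n$, which replaces one edge by $\sum_i a_i\ge2$ edges) each have strictly fewer than $n$ edges, so the inductive hypothesis gives that $\mathbb{U}(T'),\mathbb{U}(T''),\mathbb{U}(T''')$ all have positive coefficients. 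By Theorem~\ref{thm:f_g_h_b_LC}, $f_{a_m},g_{a_m},h_{a_m}$ and each $\mathbb{B}_{a_i}$ have nonnegative integer coefficients, so all three summands, hence $\mathbb{U}(T)$, have nonnegative integer coefficients. For strict positivity, Lemma~\ref{lem:f_m_LC} says the coefficients of $f_{a_m}$ in every degree $1\le k\le a_m-1$ are strictly positive; since $\mathbb{U}(T')$ has \emph{all} of its coefficients positive, a one-line convolution estimate shows the first summand $f_{a_m}\mathbb{U}(T')$ already has strictly positive coefficients in degrees $1,\dots,n$. The second summand contributes a positive constant term, since $g_{a_m}(0)=a_m-f_{a_m}(0)\ge a_m-1\ge1$ by (\ref{eq:g_m(s)_in_f_m(s)}) and $\mathbb{U}(T'')(0)>0$. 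Thus every coefficient of $\mathbb{U}(T)$ in degrees $0,\dots,n$ is positive; and since $\deg\bigl(f_{a_m}\mathbb{U}(T')\bigr)=(a_m-1)+(n-a_m+1)=n$ while the other two summands have degree $n-1$, the degree of $\mathbb{U}(T)$ is exactly $n$, closing the induction.

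The main obstacle is precisely the positivity of the \emph{interior} coefficients in the last case: their nonnegativity is immediate from Theorem~\ref{thm:f_g_h_b_LC}, but ruling out internal zeros in the sum forces one to know that $f_{a_m}$ itself has no internal zeros among its nonconstant coefficients --- the sharp form of Lemma~\ref{lem:f_m_LC}, which is why that lemma was stated with the clause that all coefficients of degree $1\le k\le m-1$ are strictly positive. Everything else is bookkeeping: checking that the four cases exhaust the possibilities for a reduced construction, that the ``string of $\ge2$ bananas'' cases strictly decrease the number of edges while the ``single added banana'' case strictly decreases the depth, and that $G'$, $G'/e$, and the graph of $T'''$ each lose at least one edge relative to $G$ so that the recursion bottoms out.
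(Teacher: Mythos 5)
Your argument is correct and follows essentially the same route as the paper: induction through the recursive cases of Section~\ref{sec:melonic_graphs}, with the degree bookkeeping and the positivity of $f_m,g_m,h_m,b_m$ from Section~\ref{sec:recursive_polynomials} doing all the work. The only (harmless) difference is in the deletion--contraction case, where the paper obtains positive coefficients in degrees $0,\dots,n-1$ from $g_{a_m}\mathbb{U}(T'')$ (all coefficients of $g_{a_m}$ being positive) and degree $n$ from the leading term of $f_{a_m}\mathbb{U}(T')$, whereas you obtain degrees $1,\dots,n$ from $f_{a_m}\mathbb{U}(T')$ via the positive nonconstant coefficients of $f_{a_m}$ and degree $0$ from the constant term of $g_{a_m}\mathbb{U}(T'')$.
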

\begin{proof}


We first use the recursion described at the end of Section \ref{sec:melonic_graphs} to prove that $\mathbb{U}(T)$ may be written as a polynomial of degree $n$ in $\mathbb{S}$ with nonnegative integral coefficients. Recall that $f_m, g_m, h_m$ are all of degree $m-1$ while $b_m$ is of degree $m$. 
In particular, an $m$-banana has a Grothendieck class of degree $m$ in $\mathbb{S}$. The Grothendieck class of a string of bananas is just the product of the classes of the individual bananas in the string, hence the degree of the class in $\mathbb{S}$ is the total number of edges. If $T$ is a melonic construction whose last stage is $((1,\ldots, 1), p, k)$, then $\mathbb{U}(T) = (\mathbb{S}+2)^{r-1}\mathbb{U}(T')$ where $r$ is the length of the tuple $(1,\ldots, 1)$ and $T'$ is obtained from $T$ by omitting the last stage. Assuming that $\mathbb{U}(T')$ is of degree $d$ where $d$ is the number of edges of the graph determined by $T'$, we then have that $G$ has $d+ r-1$ edges and $\mathbb{U}(T)$ is of degree $d + r-1$. 

For the final case involving the deletion-contraction relation, use notation as in the end of Section \ref{sec:melonic_graphs} and let $d$ be the number of edges of $G'$. Then $G'/e, G' \setminus e$ both have $d-1$ edges and $G$ has $d + a_m -1$ edges. Let $G'''$ be the graph determined by $T'''$ and let $n'$ be the total number of edges in the (one or two) strings of bananas whose attachment to vertices in $G'''$ give $G' \setminus e$. Then $G'''$ has $d-1 - n'$ edges. Assume the Grothendieck classes of $T', T'',T'''$ are of degrees equal to the number of edges of $G', G'', G'''$ respectively. By (\ref{eq:recursive_melonic}), we then have that $\mathbb{U}(T)$ is a polynomial given by a sum of polynomials with nonnegative coefficients of degree $a_m-1+ d, a_m-1 + d-1, a_m-1+ n'+d-1-n'$ respectively, hence $\mathbb{U}(T)$ is of degree $d + a_m-1$, the number of edges of $G$. 

We also use the recursion to prove the statement on positivity of the coefficients. First, observe that $b_m$ has positive coefficients by Lemma \ref{lem:b_m_LC}, hence any product $\prod_{i=1}^r b_{m_i}$ will also have positive coefficients. This covers the case of a string of bananas. For a melonic graph $G$ obtained from another $G'$ by splitting an edge $r$ times, assuming that $\mathbb{U}(G')$ has positive coefficients, $\mathbb{U}(G) = (\mathbb{S}+2)^{r-1}\mathbb{U}(G')$ also has positive coefficients as this property is preserved under products. 

Finally, we consider the case where the last stage of the construction $T$ is of the form $t_l = ((a_1,\ldots, a_r), p, k )$ with $a_m = \max\{a_i\}_{1\leq i\leq r} > 1$. 
Observe that while $f_{a_m}, h_{a_m}$ may have a zero constant term depending on the parity of $a_m$, $g_{a_m}$ has a nonzero constant term for ${a_m}>1$. Assuming $\mathbb{U}(T'), \mathbb{U}(T''),\mathbb{U}(T''')$ all have positive coefficients, it then follows that $g_{a_m}\mathbb{U}(T'')$ has positive coefficients and $f_{a_m}\mathbb{U}(T'),$ $h_{a_m}\mathbb{U}(T''')\prod_{i\neq m}b_{a_i}$ both have positive nonconstant coefficients. Thus, $\mathbb{U}(T)$ has positive coefficients.
\end{proof}

\subsection{Ultra log-concavity for the recursion polynomials}
A natural question to ask following Theorem \ref{thm:f_g_h_b_LC} is if the recursive polynomials satisfy the stronger property of being ultra log-concave. 

We first fix some language. Let $p(s) \in \mathbb{R}[s]$ be an arbitrary polynomial of degree $d$. Denote by $a_j$ the coefficient of the degree $j$ term of $p$ for $0\leq j \leq d$. We set $a_j=0$ for integers $j>d$ and $j<0$. We will say that the coefficient of degree $k>0$ satisfies the ULC inequality (\ref{eq:ULC_binom_ineq}, \ref{eq:ULC_factor}) if $k(d-k)a_k^2 \geq (k+1)(d-k+1)a_{k-1}a_{k+1}$.

We give the results of checking ultra log-concavity for $m=1$ through $m=10$ below. The computations were carried out in Sage \cite{Sage}. 
For the coefficients, the list has indexes corresponding to the degree of the term, i.e. a coefficient list $[1,2,3]$ would correspond to the polynomial $3s^2 + 2s+1$. 
\begin{table}[H]
    \centering
    \begin{tabular}{|c|c|c|c|}
    \hline 
        $m$ &  Coefficients & ULC & Degrees that fail ULC\\
        \hline 
        1 & [1] &  True & None\\
        2 & [0, 1] & True & None \\
        3 & [1, 1, 1] & False & 1\\
        4 & [0, 2, 2, 1] & False & 2\\
        5 & [1, 2, 4, 3, 1] & False & 1, 3\\
        6 & [0, 3, 6, 7, 4, 1] & False & 2, 4\\ 
        7 & [1, 3, 9, 13, 11, 5, 1] & False & 1, 3, 4, 5\\
        8 & [0, 4, 12, 22, 24, 16, 6, 1] & False & 2, 4, 5, 6\\
        9 & [1, 4, 16, 34, 46, 40, 22, 7, 1] & False & 1, 3, 4, 5, 6, 7\\
        10 & [0, 5, 20, 50, 80, 86, 62, 29, 8, 1] & False & 2, 4, 5, 6, 7, 8 \\
        \hline 
    \end{tabular}
    \caption{Checking ultra log-concavity for $f_m$, $m$=1-10}
    \label{tab:f_m_ULC_1-10}
\end{table}

\begin{table}[H]
    \centering
    \begin{tabular}{|c|c|c|c|}
    \hline 
        $m$ &  Coefficients & ULC & Degrees that fail ULC\\ 
        \hline 
        1 & [0] &  True & None\\ 
        2 & [2, 1] & True & None \\ 
        3 & [2, 5, 2] & True & None \\ 
        4 & [4, 10, 10, 3] & False & 1\\ 
        5 & [4, 18, 26, 17, 4] & False & 2\\ 
        6 & [6, 27, 54, 53, 26, 5] & False & 1\\ 
        7 & [6, 39, 96, 127, 94, 37, 6] & False & 2\\ 
        8 & [8, 52, 156, 258, 256, 152, 50, 7] & False & 1\\
        9 & [8, 68, 236, 470, 584, 464, 230, 65, 8] & False & 2\\
        10 & [10, 85, 340, 790, 1180, 1174, 778, 331, 82, 9] & False & 1\\
        \hline 
    \end{tabular}
    \caption{Checking ultra log-concavity for $g_m$, $m$=1-10}
    \label{tab:g_m_ULC_1-10}
\end{table}

\begin{table}[H]
    \centering
    \begin{tabular}{|c|c|c|c|}
    \hline 
        $m$ &  Coefficients & ULC & Degrees that fail ULC\\
        \hline 
        1 & [0] &  True & None\\
        2 & [1, 1] & True & None \\
        3 & [0, 1, 1] & True & None\\
        4 & [1, 2, 2, 1] & False & 1, 2\\
        5 & [0, 2, 4, 3, 1] & False & 3\\
        6 & [1, 3, 6, 7, 4, 1] & False & 1, 2, 4\\ 
        7 & [0, 3, 9, 13, 11, 5, 1] & False & 3, 4, 5\\
        8 & [1, 4, 12, 22, 24, 16, 6, 1] & False & 1, 2, 4, 5, 6\\
        9 & [0, 4, 16, 34, 46, 40, 22, 7, 1] & False & 3, 4, 5, 6, 7\\
        10 & [1, 5, 20, 50, 80, 86, 62, 29, 8, 1] & False & 1, 2, 4, 5, 6, 7, 8\\
        \hline 
    \end{tabular}
    \caption{Checking ultra log-concavity for $h_m$, $m$=1-10}
    \label{tab:h_m_ULC_1-10}
\end{table}

\begin{table}[H]
    \centering
    \begin{tabular}{|c|c|c|c|}
    \hline 
        $m$ &  Coefficients & ULC & Degrees that fail ULC\\
        \hline 
        1 & [2, 1] &  True & None\\
        2 & [2, 3, 1] & True & None \\
        3 & [4, 8, 5, 1] & True & None \\
        4 & [4, 14, 16, 7, 1] & True & None \\
        5 & [6, 23, 36, 27, 9, 1] & False & 1 \\
        6 & [6, 33, 69, 73, 41, 11, 1] & True & None \\ 
        7 & [8, 46, 117, 162, 129, 58, 13, 1] & False & 1\\
        8 & [8, 60, 184, 314, 326, 208, 78, 15, 1] & True & None\\
        9 & [10, 77, 272, 554, 710, 590, 314, 101, 17, 1] & False & 1\\
        10 & [10, 95, 385, 910, 1390, 1426, 988, 451, 127, 19, 1] & True & None\\
        \hline 
    \end{tabular}
    \caption{Checking ultra log-concavity for $b_m$, $m$=1-10}
    \label{tab:b_m_ULC_1-10}
\end{table}

\begin{lemma}
\label{lem:g_m_n_ULC_3}
Taking $g_{m,n}$ as defined previously (\ref{eq:g_m_n_def}), for all $m, n \geq 1$, all coefficients of degree at least $3$ satisfy the ULC inequality.     
\end{lemma}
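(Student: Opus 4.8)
The plan is to reduce the claim about $g_{m,n}$ to explicit formulas for the low-degree coefficients of $f_m$ combined with the already-established log-concavity of $f_m$. Write $g_{m,n}(s) = n(s+1)^{m-1} - f_m(s)$ and let $c_k$ denote the degree-$k$ coefficient of $g_{m,n}$, so that $c_k = n\binom{m-1}{k} - a_k$ where $a_k$ is the degree-$k$ coefficient of $f_m$ as tabulated in Section \ref{sec:f_m_coeff}. Since $g_{m,n}$ has degree $m-1$, the ULC inequality for degree $k$ reads $k(m-1-k)c_k^2 \geq (k+1)(m-k)c_{k-1}c_{k+1}$. The claim is that this holds for all $3 \leq k \leq m-2$.

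First I would dispose of the boundary and small cases: for $m \leq 4$ there are no coefficients of degree $\geq 3$ that are not the leading coefficient (for which the ULC inequality is vacuous since $c_{m} = 0$ makes the right side zero while we only need $k \leq m-2$), so one checks directly that the statement is vacuous or trivial for $m \in \{1,2,3,4\}$. For $m \geq 5$, the key idea is to exploit that the ULC inequality $k(m-1-k)c_k^2 \geq (k+1)(m-k)c_{k-1}c_{k+1}$ follows from the \emph{stronger but cleaner} fact that the coefficient sequence of $g_{m,n}$ is already known (Lemma \ref{lem:g_m_LC}) to be log-concave, $c_k^2 \geq c_{k-1}c_{k+1}$, \emph{provided} the extra binomial-type factor $\frac{k(m-1-k)}{(k+1)(m-k)}$ is not too small — but that ratio is less than $1$, so log-concavity alone does not suffice, and one genuinely needs a quantitative strengthening. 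The natural route is therefore to prove directly that $g_{m,n}$ is ULC$(m-1)$ in the relevant degree range by the same inductive scheme used for Lemma \ref{lem:f_m_LC} and Lemma \ref{lem:g_m_LC}: using the recursion (\ref{eq:g_m_n_recursive}), $g_{m+1,n}(s) = (s+1)g_{m,n}(s) - (-1)^m$, together with part (3) of Proposition \ref{prop:prod_log_concave} (the product of a ULC$(m-2)$ polynomial with the ULC$(1)$ polynomial $s+1$ is ULC$(m-1)$), so that $(s+1)g_{m-1,n}$ is ULC$(m-1)$; it then remains only to control the effect of adding the constant $\pm 1$, which perturbs only the degree-$0$, degree-$1$, and degree-$2$ ULC inequalities and hence leaves all inequalities of degree $\geq 3$ intact.

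In more detail, the induction hypothesis would be: for all $n \geq 1$, every coefficient of $g_{m-1,n}$ of degree $\geq 3$ satisfies the ULC$(m-2)$ inequality. From (\ref{eq:g_m_n_recursive}) we have $g_{m,n} = (s+1)g_{m-1,n} + (-1)^{m-2}$. Since $(s+1)g_{m-1,n}$ is a product of a polynomial whose degree-$\geq 3$ coefficients are ULC$(m-2)$ with an honest ULC$(1)$ polynomial, Proposition \ref{prop:prod_log_concave}(3) upgrades this to ULC$(m-1)$ on those degrees (one should be slightly careful and apply the proposition to the genuinely ULC polynomials, handling the low-degree coefficients of $g_{m-1,n}$ separately — which is exactly what Lemma \ref{lem:g_m_LC} and its deferred computations already do). Adding the constant $(-1)^{m-2}$ changes only $c_0$, hence affects only the ULC inequalities indexed by $k=1$ and $k=2$ (those are the only ones involving $c_0$); all inequalities for $k \geq 3$ survive verbatim. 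This closes the induction, and then setting $n = m$ recovers the statement for $g_m$ as used in the proof of Lemma \ref{lem:g_m_LC}.

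The main obstacle I anticipate is the base case and the careful bookkeeping of \emph{which} ULC inequality each low-degree coefficient enters: because the polynomials $g_{m,n}$ may have vanishing low-degree behavior depending on the parity of $m$, and because the "order" $m$ in ULC$(m)$ must match the actual degree, one must verify that the degree of $(s+1)g_{m-1,n}$ is exactly $m-1$ (true, since $g_{m-1,n}$ has positive leading coefficient by Lemma \ref{lem:g_m_LC}) and that no internal zeros appear (also guaranteed by Lemma \ref{lem:g_m_LC}). Assuming those hygiene conditions, the argument is routine; I would relegate the explicit verification of the degree-$3$ and degree-$4$ ULC inequalities for small $m$ — the genuine base cases, since for $k \leq 2$ there is nothing to prove and for $k = m-1$ the right-hand side vanishes — to a computational section analogous to Section \ref{sec:g_m_n_coeff}.
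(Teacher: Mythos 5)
Your inductive skeleton (the recursion $g_{m,n}=(s+1)g_{m-1,n}-(-1)^{m-1}$ plus Proposition \ref{prop:prod_log_concave}) is the same as the paper's, but the step where you apply the product proposition has a genuine gap. That proposition requires the factor to be ultra log-concave \emph{in full}, and $g_{m-1,n}$ is not: as the computations in Section \ref{sec:g_m_n_coeff} (and Proposition \ref{prop:f_g_h_b_ULC} for $n=m$) show, its degree-$1$ coefficient (for $m-1$ even) or degree-$2$ coefficient (for $m-1$ odd) fails the ULC inequality. There is no result available saying that ``ULC in degrees $\geq 3$'' is preserved under multiplication by $(s+1)$, so you cannot conclude that $(s+1)g_{m-1,n}$ satisfies the ULC inequalities in degrees $\geq 3$, and your subsequent observation about adding the constant $(-1)^{m-2}$ (which indeed only touches $c_0$) rests on an unjustified premise.

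Your parenthetical remedy --- apply the proposition to ``the genuinely ULC polynomials'' and handle the low-degree coefficients separately --- is exactly the paper's truncation device: set $g_{m-1,n,2}$ equal to $g_{m-1,n}$ with its degree-$0$ and degree-$1$ terms deleted, which \emph{is} ULC under the induction hypothesis, so $(s+1)g_{m-1,n,2}$ is ULC by Proposition \ref{prop:prod_log_concave}. But carrying this out undermines your conclusion that all inequalities of degree $\geq 3$ ``survive verbatim'': writing $g_{m,n}=(s+1)g_{m-1,n,2}+(s+1)(a_1s+a_0)-(-1)^{m-1}$ with $a_0,a_1$ the low-degree coefficients of $g_{m-1,n}$, one gets $g_{m,n,2}=(s+1)g_{m-1,n,2}+a_1s^2$, so the degree-$2$ coefficient of the ULC polynomial $(s+1)g_{m-1,n,2}$ is perturbed by $a_1>0$, and this perturbation enters the $k=3$ ULC inequality through the product $c_2c_4$. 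Consequently the degree-$3$ inequality must be re-verified at \emph{every} inductive step for general $m,n$ --- this explicit check (done in Section \ref{sec:g_m_n_coeff} with the closed-form coefficients) is the real content of the proof, not merely a small-$m$ base case as you propose. Your appeal to Lemma \ref{lem:g_m_LC} and its deferred computations does not fill this hole either: those computations verify the log-concavity inequality in degree $1$, not any ULC inequality in degree $3$.
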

\begin{proof}
Fix $n\geq 1$. We denote by $g_{m,n,2}$ the polynomial obtained from $g_{m,n}$ by omitting the terms of degree 0 and 1. In other words, if $a_0, a_1$ are the coefficients of the terms of degree 0,1 in $g_{m,n}$ respectively, then 
\begin{equation*}
    g_{m,n}(s) = g_{m,n,2}(s) + a_1s + a_0.
\end{equation*}
 The condition that all coefficients of degree at least $3$ satisfy the ULC inequality is equivalent to $g_{m,n,2}$ being ultra log-concave. This is trivially true when $g_{m,n}$ is of degree less than $4$, i.e. when $m \leq 4$. We proceed by induction. Let $m\geq 5$ and assume that $g_{m-1, n,2}$ is ultra log-concave. By Proposition \ref{prop:prod_log_concave}, $(s+1)g_{m-1,n,2}$ is then ultra log-concave. 
 
 Let $a_0,a_1$ denote the coefficients of the degree 0,1 terms of $g_{m-1,n}$ respectively.  Observe using (\ref{eq:g_m_n_recursive}) that then
 \begin{align*}
     g_{m,n}(s)& = (s+1)g_{m-1,n,2} + (s+1)(a_1s+a_0) - (-1)^{m-1}\\
     &= (s+1)g_{m-1, n,2} + a_1s^2 + (a_1+a_0)s + a_0 - (-1)^{m-1},
 \end{align*}
 so
 \begin{equation*}
     g_{m,n,2}(s) =  (s+1)g_{m-1,n,2} + a_1s^2. 
 \end{equation*}
 It then suffices to check that the degree 3 coefficient of $g_{m,n}$ satisfies the ULC inequality. This is done in Section \ref{sec:g_m_n_coeff}. 
\end{proof}

\begin{lemma}
\label{lem:b_m_n_ULC_3}
Taking $b_{m,n}$ as defined previously (\ref{eq:b_m_n_def}), for all $m \geq 1, n \geq 3$, all coefficients of degree at least $3$ satisfy the ULC inequality.     
\end{lemma}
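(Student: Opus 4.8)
The plan is to mimic the strategy used for $g_{m,n}$ in Lemma \ref{lem:g_m_n_ULC_3}, combining the recursion (\ref{eq:b_m_n_recursive}) with the product result Proposition \ref{prop:prod_log_concave}. Fix $n \geq 3$ and let $b_{m,n,2}$ denote the polynomial obtained from $b_{m,n}$ by deleting its degree $0$ and degree $1$ terms. By Lemma \ref{lem:b_m_LC} the polynomial $b_{m,n}$ has positive coefficients and degree $m$, so $b_{m,n,2}$ has degree $m$, nonnegative coefficients, and no internal zeros. Since the ULC inequality at a given degree $k$ depends only on the ambient degree and on the coefficients $a_{k-1},a_k,a_{k+1}$, the assertion that every coefficient of $b_{m,n}$ of degree at least $3$ satisfies the ULC inequality is \emph{equivalent} to $b_{m,n,2}$ being ultra log-concave: the degree-$1$ and degree-$2$ inequalities for $b_{m,n,2}$ hold trivially because its degree-$0$ and degree-$1$ coefficients vanish.

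Next I would record the key recursive identity. Writing $a_0,a_1$ for the degree $0$ and degree $1$ coefficients of $b_{m-1,n}$, formula (\ref{eq:b_m_n_recursive}) gives
\begin{align*}
b_{m,n}(s) &= (s+1)\bigl(b_{m-1,n,2}(s) + a_1 s + a_0\bigr) + (-1)^{m-1}(s+1) \\
&= (s+1)b_{m-1,n,2}(s) + a_1 s^2 + \bigl(a_1+a_0+(-1)^{m-1}\bigr)s + \bigl(a_0+(-1)^{m-1}\bigr),
\end{align*}
hence $b_{m,n,2}(s) = (s+1)b_{m-1,n,2}(s) + a_1 s^2$. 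In other words $b_{m,n,2}$ is obtained from $(s+1)b_{m-1,n,2}$ by increasing \emph{only} its degree-$2$ coefficient.

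Then I would induct on $m$. The base cases $m \leq 3$ are vacuous (for $m=3$, $b_{3,n,2}$ has degree $3$ and its only nontrivial ULC inequalities, at degrees $1$ and $2$, hold trivially). For $m \geq 4$, assume $b_{m-1,n,2}$ is ULC, i.e.\ ULC($m-1$). Since $s+1$ is ULC($1$) with no internal zeros, Proposition \ref{prop:prod_log_concave} shows $(s+1)b_{m-1,n,2}$ is ULC($m$), that is, ultra log-concave as a degree-$m$ polynomial. Passing to $b_{m,n,2}=(s+1)b_{m-1,n,2}+a_1 s^2$ alters only the degree-$2$ coefficient; the ULC inequalities at degrees $1$ and $2$ stay trivial (the two lowest coefficients still vanish), those at degrees $k\geq 4$ involve only unchanged coefficients, so the sole inequality that could fail is the one at degree $3$, which involves the product of the degree-$2$ and degree-$4$ coefficients. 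It therefore suffices to verify directly that the degree-$3$ coefficient of $b_{m,n}$ satisfies the ULC inequality for all $m\geq 4$ and $n\geq 3$. Using the closed forms for the low-degree coefficients of $f_m$ from Section \ref{sec:f_m_coeff}, the degree-$2$, $3$, and $4$ coefficients of $b_{m,n}=n(s+1)^{m-1}+(s+1)f_m$ are explicit polynomials in $m$ and $n$, and the required inequality $3(m-3)a_3^2 \geq 4(m-2)a_2 a_4$ reduces to an elementary polynomial inequality, whose verification I would defer to Section \ref{sec:b_m_n_coeff}.

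The main obstacle is exactly this final degree-$3$ computation. Unlike the $g_{m,n}$ case, which held for all $n\geq 1$, here the inequality degenerates for small $n$: already at $m=4$ it reduces to $3n^2-6n-5\geq 0$, which fails for $n\leq 2$. Thus the argument must be arranged so that the hypothesis $n\geq 3$ is invoked precisely at this step, and one must confirm that the inequality does not degenerate for any larger $m$ either — the $n(s+1)^{m-1}$ contribution is expected to dominate, but this needs to be checked uniformly in $m\geq 4$.
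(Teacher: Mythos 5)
Your proposal is correct and follows essentially the same route as the paper: the same truncation $b_{m,n,2}$, the same recursion $b_{m,n,2}=(s+1)b_{m-1,n,2}+a_1s^2$, induction on $m$ via Proposition \ref{prop:prod_log_concave}, and reduction to the explicit degree-$3$ check in Section \ref{sec:b_m_n_coeff}. You also correctly pinpoint where the hypothesis $n\geq 3$ enters (the $m=4$ case $3n^2-6n-5\geq 0$), matching the paper's remark that the statement fails for $m=4$, $n=2$.
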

\begin{proof}
Fix $n\geq 2$. As in Lemma \ref{lem:g_m_n_ULC_3}, we denote by $b_{m,n,2}$ the polynomial obtained from $b_{m,n}$ by omitting the terms of degree 0 and 1.
The condition that all coefficients of degree at least $3$ satisfy the ULC inequality is then equivalent to $b_{m,n,2}$ being ultra log-concave. This is trivially true when $b_{m,n}$ is of degree less than $4$, i.e. when $m \leq 3$. We proceed by induction. Let $m\geq 4$ and assume that $b_{m-1, n,2}$ is ultra log-concave. By Proposition \ref{prop:prod_log_concave}, $(s+1)b_{m-1,n,2}$ is then ultra log-concave.  
Let $a_0,a_1$ denote the coefficients of the degree 0,1 terms of $b_{m-1,n}$ respectively.  Observe using (\ref{eq:b_m_n_recursive}) that then
 \begin{align*}
     b_{m,n}(s)& = (s+1)b_{m-1,n,2} + (s+1)(a_1s+a_0) + (-1)^{m-1}(s+1)\\
     &= (s+1)b_{m-1, n,2} + a_1s^2 + (a_1+a_0 +(-1)^{m-1})s + a_0 + (-1)^{m-1},
 \end{align*}
 so
 \begin{equation*}
     b_{m,n,2}(s) =  (s+1)b_{m-1,n,2} + a_1s^2. 
 \end{equation*}
 It then suffices to check that the degree 3 coefficient of $b_{m,n}$ satisfies the ULC inequality. This is done in Section \ref{sec:b_m_n_coeff}. Note that the assumption that $n\geq 3$ rather than $n\geq 1$ or $n\geq 2$ is necessary here as the statement is false for $m=6, n=1$ and $m=4, n=2$.  
\end{proof}

\begin{proposition}
\label{prop:f_g_h_b_ULC}
(ultra log-concavity of $f_m, g_m, h_m, b_m$)

For $m\geq 4$, $f_m, g_m, h_m,$ and $b_m$ have the following properties. 

When $m$ is odd:
\begin{itemize}
    \item $f_m$ is not ultra log-concave and the coefficients of degree 1 and 3 fail the ULC inequality while the coefficient of degree 2 satisfies the ULC inequality.
    \item $g_m$ is not ultra log-concave and the only coefficient that fails the ULC inequality is degree 2.
    \item $h_m$ is not ultra log-concave and the coefficient of degree 3 fails the ULC inequality while the coefficient of degrees 1 and 2 satisfy the ULC inequality.
    \item $b_m$ is not ultra log-concave and the only coefficient that fails the ULC inequality is degree 1.
\end{itemize}

When $m$ is even:
\begin{itemize}
    \item $f_m$ is not ultra log-concave and the coefficient of degree 2 fails the ULC inequality while the coefficients of degrees 1 and 3 satisfy the ULC inequality.
    \item $g_m$ is not ultra log-concave and the only coefficient that fails the ULC inequality is degree 1.
    \item  $h_m$ is not ultra log-concave and the coeficients of degrees 1 and 2 fail the ULC inequality while the coefficient of degree 2 satisfies the ULC inequality
    \item $b_m$ is ultra log-concave 
\end{itemize}
\end{proposition}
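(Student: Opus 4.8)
The plan is to reduce all eight assertions of the proposition (four polynomials, each for $m$ even and $m$ odd) to the verification of finitely many explicit polynomial inequalities in $m$, split according to the parity of $m$. The main inputs are the two reduction lemmas already proved, together with the closed forms for the low-degree coefficients of $f_m$ recorded in Section \ref{sec:f_m_coeff}.

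First I would handle $g_m$ and $b_m$. Taking $n=m$ in Lemma \ref{lem:g_m_n_ULC_3} shows that every coefficient of $g_m$ of degree at least $3$ satisfies the ULC inequality, and taking $n=m$ in Lemma \ref{lem:b_m_n_ULC_3} (legitimate since $m\geq 4\geq 3$) does the same for $b_m$. Hence for these two polynomials only the degree $1$ and degree $2$ coefficients need to be examined. Writing $g_m = m(s+1)^{m-1}-f_m$ and $b_m = m(s+1)^{m-1}+(s+1)f_m$, the coefficients of $g_m$ and $b_m$ of degree $\leq 3$ are $\binom{m-1}{j}m$ corrected by low-degree coefficients of $f_m$, and the latter are the explicit functions of $m$ from Section \ref{sec:f_m_coeff}, with the usual parity dichotomy (constant term of $f_m$ equal to $1$ for $m$ odd and $0$ for $m$ even). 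Plugging these into the ULC inequalities $k(d-k)a_k^2\geq (k+1)(d-k+1)a_{k-1}a_{k+1}$ for $k=1,2$, with $d=m-1$ for $g_m$ and $d=m$ for $b_m$, turns each into a polynomial inequality in $m$; I would then read off its sign on the ranges $m$ odd $\geq 5$ and $m$ even $\geq 4$. For $b_m$ with $m$ even both inequalities hold, which combined with the lemma gives that $b_m$ is ultra log-concave; in every other case exactly the claimed coefficient fails, which in particular certifies that the polynomial fails to be ultra log-concave.

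Next I would treat $f_m$ and $h_m$ by direct substitution of the coefficient formulas. The degree of $f_m$ is $m-1$, so for $m\geq 5$ the degrees $1,2,3$ are all interior and the three inequalities $k=1,2,3$ can be checked using the closed forms for the coefficients of $f_m$ of degree $\leq 4$. When $m$ is even the constant term of $f_m$ vanishes, so the $k=1$ inequality is automatic (``degree $1$ satisfies''), and $k=2,3$ are checked by substitution; when $m$ is odd one has the borderline identity $a_1^2=a_0a_2$ from Lemma \ref{lem:f_m_LC}, and feeding the explicit $a_1,a_2,a_3$ into the $k=2$ ULC inequality shows it holds while the $k=1$ and $k=3$ inequalities fail. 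For $h_m=(s+1)f_{m-1}$ the coefficient of $s^j$ equals $[s^j]f_{m-1}+[s^{j-1}]f_{m-1}$, so its coefficients of degree $\leq 3$ are again explicit in $m$ (now of parity opposite to that of $m-1$), and the same substitution into the $k=1,2,3$ inequalities produces the stated pattern. The boundary value $m=4$, where $f_4$ and $h_4$ have degree $3$ so the $k=3$ check is vacuous, is settled by the direct computation already displayed in Tables \ref{tab:f_m_ULC_1-10} and \ref{tab:h_m_ULC_1-10}.

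The hard part is not conceptual --- everything follows from the two reduction lemmas plus the explicit coefficients of $f_m$ --- but organizational: keeping the parity split clean and carrying the three-term ULC inequalities for $g_m$ and $b_m$ (each of whose relevant coefficients contains a $(-1)^m$) through to clean inequalities in $m$ without sign errors. For readability I would defer the coefficient computations and the final inequality manipulations to Sections \ref{sec:f_m_coeff}, \ref{sec:g_m_n_coeff}, and \ref{sec:b_m_n_coeff}, as in the proofs of the preceding lemmas.
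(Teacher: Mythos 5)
Your proposal is correct and follows essentially the same route as the paper: reduce $g_m$ and $b_m$ to Lemmas \ref{lem:g_m_n_ULC_3} and \ref{lem:b_m_n_ULC_3} with $n=m$ plus explicit degree $1,2$ checks, and settle $f_m$ (and the boundary case $m=4$) by direct substitution of the closed-form coefficients, with all parity-split inequality verifications deferred to the appendix. The only cosmetic difference is your treatment of $h_m$ via $(s+1)f_{m-1}$, whereas the paper notes $h_m = f_m + (-1)^m$ so that all coefficients of degree $\geq 1$ coincide with those of $f_m$ and only the degree $1$ inequality needs to be rechecked; both amount to the same computation.
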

\begin{proof}
The statements for $g_m,b_m$ are given by taking $n=m$ in Lemmas \ref{lem:g_m_n_ULC_3},\ref{lem:b_m_n_ULC_3} and checking the ULC inequality for the coefficients of degrees 1 and 2, which is done in Sections \ref{sec:g_m_n_coeff} and \ref{sec:b_m_n_coeff}. The statements for $f_m$ may be checked by explicit computation, as in Section \ref{sec:f_m_coeff}. For $h_m$, observe using equations (\ref{eq:h_m(s)_in_f_m(s)}) and (\ref{eq:f_m+1_from_f_m}) that 
\begin{equation}
\label{eq:h_m_is_f_m_off1}
    h_m(s) = f_m(s) + (-1)^m.
\end{equation}
Then, the only coefficient of $h_m$ that differs from $f_m$ is the degree 0 coefficient. It then suffices to check whether or not the degree 1 coefficient satisfies the ULC inequality. Let $a_j$ denote the coefficient of the degree $j$ term of $h_m$. 
For $m$ odd, $a_0 = 0$, so the ULC inequality is satisfied. For $m \geq 4$ even, $a_0 = 1$, so using the coefficients of $f_m$ calculated in Section \ref{sec:f_m_coeff}, we have
\begin{equation*}
    (m-2)a_1^2 - 2(m-1)a_0a_2
    = \frac{m(m-2)}{2} - \frac{m(m-2)(m-1)}{2}
    = -\frac{m(m-2)^2}{2} < 0,
\end{equation*}
hence the degree 1 coefficient fails the ULC inequality. 
\end{proof}

Computer computations for $m=1$ through $m=100$ suggest the following. 
\begin{conjecture}
For $m\geq 4$, both $f_m$ and $h_m$ are not ultra log-concave and all coefficients of degree $4 \leq j \leq m-2$ fail the ULC inequality. 
\end{conjecture}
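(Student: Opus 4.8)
\emph{Overall strategy.} The plan is to reduce the statement to a single explicit polynomial inequality in the coefficients of $f_m$ and then establish that inequality by sufficiently sharp estimates. First note that by \eqref{eq:h_m_is_f_m_off1} we have $h_m(s)=f_m(s)+(-1)^m$, so $h_m$ and $f_m$ share the same degree $m-1$, the same leading term, and identical coefficients in every degree $\geq 1$; hence the ULC inequality at any degree $k\geq 2$ is literally the same statement for $h_m$ as for $f_m$. Since $4\leq j$ forces $j\geq 2$, it suffices to treat $f_m$. The fact that $f_m$ and $h_m$ fail to be ultra log-concave is already part of Proposition \ref{prop:f_g_h_b_ULC}, and for $m\in\{4,5\}$ the index range $4\leq j\leq m-2$ is empty, so we may assume $m\geq 6$. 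Write $a_j$ for the degree-$j$ coefficient of $f_m$. From $f_m(s)(s+2)=(s+1)^m-(-1)^m$ (equivalently from \eqref{eq:f_m+1_from_f_m}) one reads off $2a_j+a_{j-1}=\binom{m}{j}$ for $j\geq 1$ and $a_0=\tfrac12(1-(-1)^m)$, and unwinding this recursion yields the closed form $a_j=\sum_{i=0}^{j}\frac{(-1)^i}{2^{i+1}}\binom{m}{j-i}+\frac{(-1)^{j+m+1}}{2^{j+1}}$.

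\emph{A quadratic reformulation.} Substituting $a_{j-1}=\binom{m}{j}-2a_j$, $a_{j+1}=\tfrac12\bigl(\binom{m}{j+1}-a_j\bigr)$ and $\binom{m}{j+1}=\binom{m}{j}\frac{m-j}{j+1}$ into the failure inequality $j(m-1-j)a_j^2<(j+1)(m-j)a_{j-1}a_{j+1}$ and using the cancellation $(j+1)(m-j)-j(m-1-j)=m$, one finds that ULC fails at degree $j$ if and only if
\[
R_j(m):=2m\,a_j^{\,2}-(m-j)(2m-j+1)\binom{m}{j}a_j+(m-j)^2\binom{m}{j}^2>0 .
\]
Dividing by $\binom{m}{j}^2$ and setting $b_j:=a_j/\binom{m}{j}$, which satisfies $0\leq b_j\leq\tfrac12$ for $j\geq 1$ (the $a_j$ are nonnegative and $2b_j=1-\frac{j}{m-j+1}b_{j-1}\leq 1$), this becomes $2m b_j^2-(m-j)(2m-j+1)b_j+(m-j)^2>0$; the larger root of this upward quadratic in $b_j$ exceeds $\tfrac12$, since $(m-j)(2m-j+1)>2m$ for $j\leq m-2$, so ULC fails at degree $j$ exactly when $b_j$ lies below the smaller root
\[
\rho_-(m,j)=\frac{2(m-j)}{(2m-j+1)+\sqrt{(2m-j+1)^2-8m}} .
\]
Thus the conjecture is equivalent to the single inequality $b_j<\rho_-(m,j)$ for all $m\geq 6$ and all $4\leq j\leq m-2$.

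\emph{Estimating $b_j$.} I would prove $b_j<\rho_-(m,j)$ from sharp two-sided bounds on $b_j$ obtained inductively from the recursion $b_j=\tfrac12-\frac{j}{2(m-j+1)}b_{j-1}$. Both $b_j$ and $\rho_-(m,j)$ equal $\frac{m-j}{2m-j+1}+O(m^{-2})$ and differ from one another only at relative order $m^{-3}$, so the bounds must carry an extra order of accuracy. I anticipate splitting the range into a "bulk" regime $j\leq m/2$, where $b_j$ stays close to $\tfrac12$ and a clean inductive bracketing of $b_j$ propagates, and a "tail" regime, where the substitution $j=m-1-\ell$ with $\ell$ small makes $b_j$ (of size $\asymp\ell/m$) directly tractable; the two parities of $m$ can be handled separately, or uniformly by isolating the parity-dependent term $\frac{(-1)^{j+m+1}}{2^{j+1}}$ of $a_j$, which for $j\geq 4$ is a perturbation of size at most $\tfrac1{32}$. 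The finitely many small values of $m$ left over are verified directly, as in Tables \ref{tab:f_m_ULC_1-10} and \ref{tab:h_m_ULC_1-10}.

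\emph{Main obstacle.} The crux is exactly this uniform precision: since the margin in the final inequality is of order $m^{-3}$ while the deviation of $b_j$ from $\tfrac12$ is of order $j/m$, any estimate accurate only to $o(m^{-2})$ is useless near the crossover, and one needs explicit, non-asymptotic error control valid simultaneously for all $m\geq 6$ and all $4\leq j\leq m-2$. It would be considerably cleaner if $R_j(m)$ admitted an exact identity making its positivity manifest --- for instance a representation as a sum of nonnegative terms after substituting the closed form for $a_j$, or one coming from the generating function $\frac{(1+z)^m}{2+z}$ whose coefficients differ from the $a_j$ only by the negligible tail --- and I would search for such an identity first; failing that, the estimate-based argument above should still go through with care.
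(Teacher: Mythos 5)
This statement is one the paper does \emph{not} prove: it is stated as a conjecture, supported only by machine checks up to $m=100$, so there is no proof of record to compare you against, and a complete argument here would be new mathematics. Your proposal is not that complete argument. What you do carry out is correct, and I verified it: $(s+2)f_m(s)=(s+1)^m-(-1)^m$ gives $a_{j-1}+2a_j=\binom{m}{j}$ and your closed form for $a_j$; by \eqref{eq:h_m_is_f_m_off1} the case of $h_m$ coincides with that of $f_m$ in every degree $j\ge 2$, so in particular for $4\le j\le m-2$ (and the non-ULC assertion itself is Proposition \ref{prop:f_g_h_b_ULC}); and the quadratic reformulation is right --- using $(j+1)(m-j)-j(m-1-j)=m$, failure of \eqref{eq:ULC_factor} at degree $j$ is equivalent to $b_j<\rho_-(m,j)$, since $0\le b_j\le\tfrac12<\rho_+$ and the discriminant $(2m-j+1)^2-8m$ is positive for $j\le m-2$. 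That repackaging of the conjecture as a single inequality for the normalized coefficients is genuinely useful.

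The gap is that the entire analytic content --- the uniform inequality $b_j<\rho_-(m,j)$ for all $m\ge 6$ and $4\le j\le m-2$ --- is only announced, not proved. No explicit two-sided brackets for $b_j$ are produced, no induction hypothesis is formulated, and the bulk/tail split is left as intention; ``should go through with care'' cannot be accepted for a family that sits this close to equality (the paper already exhibits exact equality cases such as $a_1^2=a_0a_2$ for odd $m$). The tightness you flag is real: for instance at $m=20$, $j=10$ one has $a_{10}=62292$, so $b_{10}=62292/\binom{20}{10}\approx 0.33716$, while $\rho_-(20,10)=20/\bigl(31+\sqrt{801}\bigr)\approx 0.33726$, an absolute margin of about $10^{-4}$, i.e.\ of the order $m^{-3}$ you predict. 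Moreover, your guiding approximation $b_j=\frac{m-j}{2m-j+1}+O(m^{-2})$ is already off at order $m^{-1}$ in the bulk regime $j\asymp m$ (there $b_j$ is close to $\frac{m-j}{2m-j}$; at $m=20$, $j=10$ your expression gives $10/31\approx 0.3226$ versus the true $0.3372$), so the expansion around which you plan to build the inductive bracketing must itself be corrected before error control sharp enough to resolve an $m^{-3}$ margin can even be attempted. As written, the proposal is a correct reformulation plus a plan; the conjecture remains open.
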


\subsection{Ultra-log-concavity of order $m$ for the recursion polynomials}
The polynomials $f_m, g_m, h_m$ are slightly better behaved with respect to being ultra log-concave of order $m$ (rather than $m-1$). Note that $b_m$ is of degree $m$, hence the results for the ultra log-concavity of order $m$ for $b_m$ are covered by Proposition \ref{prop:f_g_h_b_ULC}, though we include them again in Proposition \ref{prop:f_g_h_b_ULC_m} for completeness.  As before, let $p(s) \in \mathbb{R}[s]$ be a polynomial of degree $d<m$ and denote by $a_j$ the coefficient of the degree $j$ term of $p$ for $0 \leq j \leq d$ with $a_j=0$ for $j > d$. 
We say that the coefficient of degree $k>0$ satisfies the ULC($m$) inequality (\ref{eq:ULC_factor_orderm}) if $k(m-k)a_k^2 \geq (k+1)(m-k+1)a_{k-1}a_{k+1}$.

We first give the results of ULC($m$) for $f_m, g_m, h_m$ computed for $m=1$ through $m=10$ below. As before, the coefficients are listed in order $[a_0,\ldots, a_{m-1}]$. 
\begin{table}[H]
    \centering
    \begin{tabular}{|c|c|c|c|}
    \hline 
        $m$ &  Coefficients & ULC($m$) & Degrees that fail ULC($m$)\\
        \hline 
        1 & [1] &  True & None\\
        2 & [0, 1] & True & None \\
        3 & [1, 1, 1] & False & 1\\
        4 & [0, 2, 2, 1] & False & 2\\
        5 & [1, 2, 4, 3, 1] & False & 1\\
        6 & [0, 3, 6, 7, 4, 1] & False & 2\\ 
        7 & [1, 3, 9, 13, 11, 5, 1] & False & 1\\
        8 & [0, 4, 12, 22, 24, 16, 6, 1] & False & 2\\
        9 & [1, 4, 16, 34, 46, 40, 22, 7, 1] & False & 1\\
        10 & [0, 5, 20, 50, 80, 86, 62, 29, 8, 1] & False & 2 \\
        \hline 
    \end{tabular}
    \caption{Checking ULC($m$) for $f_m$, $m$=1-10}
    \label{tab:f_m_ULC_m_1-10}
\end{table}

\begin{table}[H]
    \centering
    \begin{tabular}{|c|c|c|c|}
    \hline 
        $m$ &  Coefficients & ULC($m$) & Degrees that fail ULC($m$)\\ 
        \hline 
        1 & [0] &  True & None\\ 
        2 & [2, 1] & True & None \\ 
        3 & [2, 5, 2] & True & None \\ 
        4 & [4, 10, 10, 3] & True & None\\ 
        5 & [4, 18, 26, 17, 4] & True & None\\ 
        6 & [6, 27, 54, 53, 26, 5] & False & 1\\ 
        7 & [6, 39, 96, 127, 94, 37, 6] & True & None\\ 
        8 & [8, 52, 156, 258, 256, 152, 50, 7] & False & 1\\
        9 & [8, 68, 236, 470, 584, 464, 230, 65, 8] & True & None\\
        10 & [10, 85, 340, 790, 1180, 1174, 778, 331, 82, 9] & False & 1\\
        \hline 
    \end{tabular}
    \caption{Checking ULC($m$) for $g_m$, $m$=1-10}
    \label{tab:g_m_ULC_m_1-10}
\end{table}

\begin{table}[H]
    \centering
    \begin{tabular}{|c|c|c|c|}
    \hline 
        $m$ &  Coefficients & ULC($m$) & Degrees that fail ULC($m$)\\
        \hline 
        1 & [0] &  True & None\\
        2 & [1, 1] & True & None \\
        3 & [0, 1, 1] & True & None\\
        4 & [1, 2, 2, 1] & False & 1, 2\\
        5 & [0, 2, 4, 3, 1] & True & None\\
        6 & [1, 3, 6, 7, 4, 1] & False & 1, 2\\ 
        7 & [0, 3, 9, 13, 11, 5, 1] & True & None\\
        8 & [1, 4, 12, 22, 24, 16, 6, 1] & False & 1, 2\\
        9 & [0, 4, 16, 34, 46, 40, 22, 7, 1] & True & None\\
        10 & [1, 5, 20, 50, 80, 86, 62, 29, 8, 1] & False & 1, 2\\
        \hline 
    \end{tabular}
    \caption{Checking ULC($m$) for $h_m$, $m$=1-10}
    \label{tab:h_m_ULC_m_1-10}
\end{table}

\begin{proposition}
\label{prop:f_g_h_b_ULC_m}
(ultra log-concavity of order $m$ of $f_m, g_m, h_m, b_m$)

For $m\geq 6$, $f_m, g_m, h_m,$ and $b_m$ have the following properties. 

When $m$ is odd:
\begin{itemize}
    \item $f_m$ is not ULC($m$) and the only coefficient that fails the ULC($m$) inequality is degree $1$.
    \item $g_m$ is ULC($m$).
    \item $h_m$ is ULC($m$).
    \item $b_m$ is not ULC($m$) and the only coefficient that fails the ULC($m$) inequality is degree 1.
\end{itemize}

When $m$ is even:
\begin{itemize}
    \item $f_m$ is not ULC($m$) and the only coefficient that fails the ULC($m$) inequality is degree $2$.
    \item $g_m$ is not ULC($m$) and the only coefficient that fails the ULC($m$) inequality is degree $1$.
    \item  $h_m$ is not ULC($m$) and the only coefficients that fail the ULC($m$) inequality are degrees $1,2$.
    \item $b_m$ is ULC($m$).
\end{itemize}
\end{proposition}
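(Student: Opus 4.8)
\emph{Proof strategy.} The plan is to reduce, for each of the four families and for $m\geq 6$, the verification of the ULC($m$) inequality to (a) the behavior at degrees $\geq 3$, which is already under control for $g_m$ and $b_m$, and (b) a direct computation of the degree-$0$, degree-$1$, and degree-$2$ coefficients as functions of $m$. The one elementary fact I would record first is that, for a fixed coefficient index $k$ with $a_{k-1},a_{k+1}\geq 0$, the ULC($m-1$) inequality $k(m-1-k)a_k^2\geq (k+1)(m-k)a_{k-1}a_{k+1}$ implies the ULC($m$) inequality $k(m-k)a_k^2\geq (k+1)(m+1-k)a_{k-1}a_{k+1}$, since after clearing denominators this amounts to $(m-k)^2\geq (m-k)^2-1$. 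Combined with Lemmas \ref{lem:g_m_n_ULC_3} and \ref{lem:b_m_n_ULC_3} at $n=m$ (legitimate since $m\geq 6\geq 3$), this shows that every coefficient of $g_m=g_{m,m}$ and of $b_m=b_{m,m}$ of degree $\geq 3$ satisfies the ULC($m$) inequality. Since $b_m$ has degree $m$, its ULC($m$) condition is ordinary ULC, so for $b_m$ the remaining degree-$1$ and degree-$2$ checks are precisely those already carried out for Proposition \ref{prop:f_g_h_b_ULC}, and nothing further is needed.

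For $f_m$ the degree-$\geq 3$ statement is genuinely new: as the conjecture above records, $f_m$ violates even the ULC($m-1$) inequality at the middle degrees, so the argument must exploit the extra slack of order $m$ over order $m-1$. I would mirror the induction of Lemmas \ref{lem:f_m_LC} and \ref{lem:g_m_n_ULC_3}. Write $f_{m,2}$ for the polynomial obtained from $f_m$ by deleting its degree-$0$ and degree-$1$ terms; then $f_{m,2}$ is ULC($m$) if and only if every coefficient of $f_m$ of degree $\geq 3$ satisfies the ULC($m$) inequality. From $f_{m+1}=(s+1)f_m+(-1)^m$ one gets $f_{m+1,2}=(s+1)f_{m,2}+a_1 s^2$, with $a_1$ the degree-$1$ coefficient of $f_m$. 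Assuming inductively that $f_{m,2}$ is ULC($m$), Proposition \ref{prop:prod_log_concave}(3) makes $(s+1)f_{m,2}$ ULC($m+1$); adding $a_1 s^2$ alters only the degree-$2$ coefficient, so the degree-$2$ ULC($m+1$) inequality of $f_{m+1,2}$ is automatic (its degree-$1$ coefficient is $0$) and every ULC($m+1$) inequality at degree $\geq 4$ is inherited from $(s+1)f_{m,2}$. Thus only the single degree-$3$ ULC($m+1$) inequality for $f_{m+1}$ needs checking; after substituting the closed forms for the degree-$2,3,4$ coefficients of $f_m$ this becomes a polynomial inequality in $m$ that I would verify (together with the base cases $m=4,5$) in Section \ref{sec:f_m_coeff}. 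This gives that $f_{m,2}$ is ULC($m$) for all $m\geq 4$, hence every coefficient of $f_m$ of degree $\geq 3$ satisfies the ULC($m$) inequality.

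It then remains to treat degrees $1$ and $2$ and to pass to $h_m$. By (\ref{eq:h_m_is_f_m_off1}), $h_m=f_m+(-1)^m$, so $h_m$ and $f_m$ agree in all coefficients except degree $0$; in particular their degree-$2$ ULC($m$) inequalities coincide and their degree-$1$ ULC($m$) inequalities differ only through the constant term, so the degree-$\geq 3$ statement for $h_m$ is immediate from that for $f_m$. For $f_m$ and $g_m$ (and hence $h_m$) I would then substitute the explicit degree-$0,1,2$ coefficients — computed in Sections \ref{sec:f_m_coeff}, \ref{sec:g_m_n_coeff} — into the inequalities $(m-1)a_1^2\geq 2m\,a_0a_2$ (degree $1$) and $2(m-2)a_2^2\geq 3(m-1)a_1a_3$ (degree $2$), splitting into $m$ even and $m$ odd. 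Because the constant term of $f_m$ is $1$ when $m$ is odd and $0$ when $m$ is even, with the opposite behavior for $h_m$ and a strictly positive constant term for $g_m$ via (\ref{eq:g_m(s)_in_f_m(s)}), these two inequalities toggle between satisfied and violated according to parity, yielding exactly the parity dichotomy in the statement; the finitely many cases $m<6$ are read off Tables \ref{tab:f_m_ULC_m_1-10}--\ref{tab:h_m_ULC_m_1-10} and Table \ref{tab:b_m_ULC_1-10}.

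The main obstacle is the degree-$\geq 3$ claim for $f_m$: unlike the corresponding statements for $g_m$ and $b_m$, it is not a corollary of a lemma already proved, so the induction above must be run from scratch, and its degree-$3$ step genuinely relies on the order-$m$ versus order-$(m-1)$ gap — the same inequality is false with $m-1$ in place of $m$ — which forces the bookkeeping with the parity-dependent formulas for the degree-$2,3,4$ coefficients of $f_m$ to be done carefully rather than quoted.
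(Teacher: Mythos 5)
Your proposal is correct and follows essentially the same route as the paper: an induction on the truncated polynomial $f_{m,2}$ via $f_{m+1,2}=(s+1)f_{m,2}+a_1s^2$ and Proposition \ref{prop:prod_log_concave}(3), reducing everything to a single degree-$3$ ULC($m$) check; $h_m$ handled through $h_m=f_m+(-1)^m$ with only the degree-$1$ coefficient to recheck; $g_m$ and $b_m$ obtained from the ULC($m-1$)$\Rightarrow$ULC($m$) slack together with the degree-$\geq 3$ control already established (and, for $b_m$, the observation that ULC($m$) is just ULC since $\deg b_m=m$); and the parity-dependent degree-$1,2$ verifications deferred to explicit coefficient formulas, exactly as in the paper's appendix. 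No gaps of substance.
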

\begin{proof}
We first prove by induction on $m$ that for all $m\geq 0$, all coefficients of degree at least 3 satisfy the ULC($m$) inequality for $f_m$. As before, we denote by $f_{m,2}$ the polynomial obtained from $f_m$ by omitting the terms of degree 0 and 1. The condition that all coefficients of degree at least 3 satisfy the ULC($m$) inequality for $f_m$ is equivalent to $f_{m,2}$ being ULC($m$). Let $a_0,a_1$ be the coefficients of the terms of degree 0, 1 in $f_{m}$. Then, using (\ref{eq:f_m+1_from_f_m}),
\begin{equation*}
    f_{m+1}(s) 
    = (s+1)f_{m,2} + (s+1)(a_1s+a_0) + (-1)^m
    = (s+1)f_{m,2} + a_1s^2 + (a_0+a_1)s + a_0 + (-1)^m,
\end{equation*}
so 
\begin{equation*}
    f_{m+1,2} = (s+1)f_{m,2} + a_1s^2. 
\end{equation*}
For $0 \leq m \leq 4$, $f_{m}$ is of degree at most $3$, hence $f_{m,2}$ is trivially ULC($m$). Let $m\geq 5$ and assume that $f_{m-1,2}$ is ULC($m-1$). By Proposition \ref{prop:prod_log_concave}, $(s+1)f_{m,2}$ is then ULC($m$). It then suffices to check that the degree 3 coefficients of $f_m$ satisfies the ULC($m$). This is done explicitly in Section \ref{sec:f_m_coeff}. The checking of the statements about the degree 1 and 2 coefficients of $f_m$ may also be found in Section \ref{sec:f_m_coeff}. 

As before, by (\ref{eq:h_m_is_f_m_off1}), the statements for $h_m$ follow from that of $f_m$ and checking ULC($m$) inequality for the degree 1 coefficient of $h_m$, as below:
\begin{equation*}
    m \text{ odd}: \quad (m-1)a_1^2 - 2ma_0a_2 = \frac{(m-1)^3}{4} > 0
\end{equation*}
\begin{equation*}
    m \text{ even}: \quad (m-1)a_1^2 - 2ma_0a_2
    = \frac{m^2(m-1)}{4} - \frac{m^2(m-2)}{2}
    = - \frac{m^2(m-3)}{4} < 0
\end{equation*}

Finally, the statements for $g_m$ and $b_m$ follow from the previous proposition. More precisely, as ULC($m-1$) implies ULC($m$), by Proposition \ref{prop:f_g_h_b_ULC}, all coefficients of degree at least 3 of $g_m$ satisfy the ULC($m$) inequality. It then suffices to check the degree 1 and 2 coefficients, which is done in Section \ref{sec:g_m_n_coeff}. 
\end{proof}

\section{Necklace Graphs}
In this section, we take necklace graphs to be graphs obtained by multiplying the edges of a polygon.  Equivalently, a necklace graph is obtained from a string of bananas by identifying the two outer endpoints. This section is inspired by Remark 2.2 of \cite{MR4620363} where the authors introduce the necklace graph consisting of only $2$-bananas, which we discuss shortly. 
The main result of this section is log-concavity of the Grothendieck classes of a specific infinite subfamily of necklace graphs, which we call \textit{clasped necklaces}, described below. 

\subsection{Necklace graph of $2$-bananas}
In Remark 2.2 of \cite{MR4620363}, the authors introduce the necklace graph consisting of only $2$-bananas. A corresponding melonic construction for the necklace graph of $n$ $2$-bananas is $(((3), 0, 1), ((2,\ldots, 2), 1,1))$ where the tuple $(2,\ldots, 2)$ has length $n-1$. 
More generally, for $m\geq 1, n\geq 2$, we denote by $G_{m,n}$ the necklace graph of $n$ $m$-bananas with melonic construction given by $(((m+1), 0, 1), ((m,\ldots, m), 1,1))$ where $(m,\ldots, m)$ has length $n-1$. We take $G_{m, 1}$ to be the graph with one vertex and $m$-loops. Note that $G_{m,2}$ is just a $2m$-banana. Observe that for $n\geq 3$, $G_{1,n}$ is an $n$-sided polygon, hence may be obtained from a $2$-banana by splitting an edge $n-2$ times. Its Grothendieck class is then
\begin{equation}
    \mathbb{U}(G_{1,n}) = \mathbb{B}_2(\mathbb{S}+2)^{n-2}
\end{equation}
which is ultra log-concave in $\mathbb{S}$ by Proposition \ref{prop:prod_log_concave} as $\mathbb{B}_2$ is ultra log-concave in $\mathbb{S}$. 

The necklace graph of $n$ $2$-bananas, $G_{2, n}$, has  Grothendieck class
\begin{equation}
    \mathbb{U}(G_{2,n}) = (\mathbb{T}^n + n\mathbb{T}^{n-1}-1)(\mathbb{T}+1)^{n-1}\mathbb{T} = ((\mathbb{S}+1)^n+n(\mathbb{S}+1)^{n-1}-1)(\mathbb{S}+2)^{n-1}(\mathbb{S}+1)
\end{equation}
as given in \cite[Remark~6.3]{MR4620363}. This formula may be proved by induction using (\ref{eq:necklace_delectioncontraction}) and (\ref{eq:clasp_necklace_class_T})  below. By observing that 
\begin{equation*}(\mathbb{S}+1)^n+n(\mathbb{S}+1)^{n-1}-1
    = (\mathbb{S}+1)^{n-1}(\mathbb{S}+1+n) - 1,
\end{equation*}
we see that $\mathbb{U}(G_{2,n})$ is also ultra log-concave in $\mathbb{S}$.

\subsection{Clasped necklaces}
For $m \geq 1, n\geq 2$, define the \textit{clasped necklace} $G_{m,n}'$ to be the necklace graph of $n-1$ $m$-bananas and a single $1$-banana. The clasped necklace $G'_{3,7}$ is pictured in Figure \ref{fig:clasped_necklace_3_7} below. 
The graph has melonic construction $((m+1, 0,1), ((1, m,\ldots, m), 1, 1))$ where $(1, m,\ldots, m)$ has length $n-1$. Denoting the single $1$-banana by $e$, we see that $G_{m,n} = (G_{m,n}')_{me}$. Note that $G_{m, 2}'$ is a $(m+1)$-banana. 

\begin{figure}
    \centering
    \includegraphics[width=0.4\linewidth]{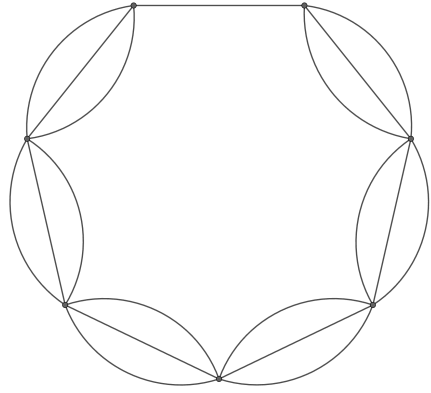}
    \caption{The clasped necklace $G'_{3,7}$ with $6$ $3$-bananas. }
    \label{fig:clasped_necklace_3_7}
\end{figure}

In what follows, we fix $m\geq 2$ and $n\geq 3$. For simplicity, we will also consider polynomials in terms of $\mathbb{T}$ instead of $\mathbb{S}$. 
Note that for $n\geq 3$,
 $G_{m,n}' / e = G_{m, n-1}$ and $G_{m,n}' \setminus e$ is a string of $n-1$ $m$-bananas. The deletion-contraction relation then gives 
 \begin{equation}
\label{eq:necklace_delectioncontraction}
     \mathbb{U}(G_{m,n}) = f_m\mathbb{U}(G_{m,n}') + g_m\mathbb{U}(G_{m, n-1}) + h_m\mathbb{B}_m^{n-1}.
 \end{equation}
Let $G_{m,n}''$ be the graph obtained from $G'_{m,n-1}$ by splitting the edge $e$, so $G_{m, n}''$ is the necklace graph of $2$ $1$-bananas and $n-2$ $m$-bananas. Then, $G_{m,n}' = (G_{m, n}'')_{me'}$ where $e'$ is an edge added from the splitting of $e$. Observe that then $G_{m,n}'' / e' = G_{m, n-1}'$ and $G_{m,n}'' \setminus e'$ is a string of $n-2$ $m$ bananas and a single $1$-banana. It follows that 
\begin{equation}
\begin{cases}
 \mathbb{U}(G_{m,n}'') = (\mathbb{T}+1)\mathbb{U}(G'_{m, n-1})\\
 \mathbb{U}(G''_{m, n} / e') = \mathbb{U}(G'_{m, n-1}) \\
 \mathbb{U}(G''_{m, n}\setminus e') = (\mathbb{T}+1)\mathbb{B}_m^{n-2}
\end{cases},
\end{equation}
hence
\begin{equation}
\label{eq:clasp_necklace_deletioncontraction}
    \mathbb{U}(G'_{m,n}) = f_m(\mathbb{T}+1)\mathbb{U}(G'_{m, n-1})
    + g_m\mathbb{U}(G'_{m, n-1})  + h_m(\mathbb{T}+1)\mathbb{B}_m^{n-2}. 
\end{equation}
We arrive at the following closed form for the Grothendieck class of $G'_{m,n}$. 
\begin{lemma}
\label{lem:clasp_necklace_class_T}
For $m,n \geq 2$, the necklace graph of $n-1$ $m$-bananas and a single $1$-banana has Grothendieck class 
\begin{equation}
\label{eq:clasp_necklace_class_T}
    \mathbb{U}(G_{m,n}') = \mathbb{T}(\mathbb{T}+1)\mathbb{B}_m^{n-2}\left(\mathbb{T}^{m-1} + \sum_{k=0}^{m-2}(-1)^{m-2-k}(n+k-1)\mathbb{T}^k \right).
\end{equation}
\end{lemma}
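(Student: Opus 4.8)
The plan is an induction on $n$ built on the deletion--contraction recursion (\ref{eq:clasp_necklace_deletioncontraction}), which I would first put into a much simpler shape. Working throughout in terms of $\mathbb{T}$: since $g_m = m\mathbb{T}^{m-1} - f_m$ and $\mathbb{B}_m = m\mathbb{T}^{m-1} + \mathbb{T} f_m$ by (\ref{eq:b_m(s)_in_f_m(s)}), the coefficient of $\mathbb{U}(G'_{m,n-1})$ in (\ref{eq:clasp_necklace_deletioncontraction}) collapses:
\begin{equation*}
 f_m(\mathbb{T}+1) + g_m = \mathbb{T} f_m + m\mathbb{T}^{m-1} = \mathbb{B}_m,
\end{equation*}
while $h_m = \frac{\mathbb{T}^m + (-1)^m\mathbb{T}}{\mathbb{T}+1}$ gives $h_m(\mathbb{T}+1) = \mathbb{T}^m + (-1)^m\mathbb{T} = \mathbb{T}(\mathbb{T}^{m-1} + (-1)^m)$. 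Hence for $n \geq 3$ the recursion reads
\begin{equation*}
 \mathbb{U}(G'_{m,n}) = \mathbb{B}_m\,\mathbb{U}(G'_{m,n-1}) + \mathbb{T}\bigl(\mathbb{T}^{m-1} + (-1)^m\bigr)\,\mathbb{B}_m^{n-2}.
\end{equation*}

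\textbf{Inductive step.} Writing $P_n(\mathbb{T}) := \mathbb{T}^{m-1} + \sum_{k=0}^{m-2}(-1)^{m-2-k}(n+k-1)\mathbb{T}^k$, the assertion is $\mathbb{U}(G'_{m,n}) = \mathbb{T}(\mathbb{T}+1)\mathbb{B}_m^{n-2}P_n$. I would substitute the hypothesis $\mathbb{U}(G'_{m,n-1}) = \mathbb{T}(\mathbb{T}+1)\mathbb{B}_m^{n-3}P_{n-1}$ into the simplified recursion; both sides then carry the common factor $\mathbb{T}\mathbb{B}_m^{n-2}$ (and $\mathbb{Z}[\mathbb{T}]$ is a domain), so matching the remaining factors the claim reduces to the polynomial identity $(\mathbb{T}+1)(P_n - P_{n-1}) = \mathbb{T}^{m-1} + (-1)^m$. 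Since $P_n - P_{n-1} = \sum_{k=0}^{m-2}(-1)^{m-2-k}\mathbb{T}^k = f_{m-1}$ by (\ref{eq:f_m_expansion}) and $(\mathbb{T}+1)f_{m-1} = \mathbb{T}^{m-1} - (-1)^{m-1} = \mathbb{T}^{m-1} + (-1)^m$ straight from the definition of $f_{m-1}$, the identity is immediate.

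\textbf{Base case.} For $n = 2$ the graph $G'_{m,2}$ is the $(m+1)$-banana, so $\mathbb{U}(G'_{m,2}) = \mathbb{B}_{m+1}$, and I must check $\mathbb{B}_{m+1} = \mathbb{T}(\mathbb{T}+1)P_2$. Using $\mathbb{B}_{m+1} = (m+1)\mathbb{T}^m + \mathbb{T} f_{m+1}$ from (\ref{eq:b_m(s)_in_f_m(s)}) and cancelling a factor $\mathbb{T}$, this is equivalent to $(\mathbb{T}+1)P_2 = f_{m+1} + (m+1)\mathbb{T}^{m-1}$. Expanding $(\mathbb{T}+1)P_2$ and performing a short rearrangement of the weighted alternating sum $\sum_{k=0}^{m-2}(k+1)(-1)^{m-2-k}\mathbb{T}^k$ gives $(\mathbb{T}+1)P_2 = \mathbb{T}^m + m\mathbb{T}^{m-1} + f_{m-1}$, so the base case comes down to $f_{m+1} - f_{m-1} = \mathbb{T}^m - \mathbb{T}^{m-1}$, which is immediate from $f_j = \frac{\mathbb{T}^j - (-1)^j}{\mathbb{T}+1}$.

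\textbf{Main obstacle.} The only genuinely mechanical --- and mildly delicate --- part is this base-case computation, because of the parity-dependent signs $(-1)^{m-2-k}$ and the reindexing of the weighted alternating sum; once past it, the inductive step is a one-line consequence of having rewritten the recursion as $\mathbb{U}(G'_{m,n}) = \mathbb{B}_m\mathbb{U}(G'_{m,n-1}) + \mathbb{T}(\mathbb{T}^{m-1}+(-1)^m)\mathbb{B}_m^{n-2}$. One could instead verify $n=2$ and $n=3$ directly and start the induction at $n=3$, but this does not avoid the same computation.
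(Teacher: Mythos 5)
Your proposal is correct and follows essentially the same route as the paper: induction on $n$ via the deletion--contraction recursion (\ref{eq:clasp_necklace_deletioncontraction}), simplified through the same identity $f_m(\mathbb{T}+1)+g_m=\mathbb{B}_m$, with the inductive step amounting to adding $f_{m-1}=\sum_{k=0}^{m-2}(-1)^{m-2-k}\mathbb{T}^k$ to the bracketed coefficient sum (your identity $(\mathbb{T}+1)(P_n-P_{n-1})=\mathbb{T}^{m-1}+(-1)^m$ is just the unexpanded form of the paper's $h_m(\mathbb{T}+1)=\mathbb{T}(\mathbb{T}+1)f_{m-1}$). The only minor divergence is the base case $n=2$, which the paper handles via the factorization (\ref{eq:B_m_T+1_factor}) of $\mathbb{B}_{m+1}$ (itself proved by induction on $m$), whereas you verify it directly from $\mathbb{B}_{m+1}=(m+1)\mathbb{T}^m+\mathbb{T}f_{m+1}$ and the identity $f_{m+1}-f_{m-1}=\mathbb{T}^m-\mathbb{T}^{m-1}$; both computations are sound.
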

\begin{proof}
We first record some useful expressions involving the recursive polynomials in terms of $\mathbb{T}$. For $m\geq 2$,
\begin{equation}
\label{eq:B_m_T+1_factor}
    \mathbb{B}_m = \mathbb{T}(\mathbb{T}+1)\left(\mathbb{T}^{m-2} + \sum_{k=0}^{m-3}(-1)^{m-3-k}(k+1)\mathbb{T}^k \right).
\end{equation}
This may be proved by induction on $m$ using (\ref{eq:f_m+1_from_f_m}) written in terms of $\mathbb{T}$. Similarly, using (\ref{eq:g_m(s)_in_f_m(s)}), (\ref{eq:b_m(s)_in_f_m(s)}), we see that 
\begin{equation}
\label{eq:f_m(T+1)_plus_g_m_equals_B_m}
    f_m(\mathbb{T}+1) + g_m =  \mathbb{B}_m.
\end{equation}
Finally, using (\ref{eq:h_m(s)_in_f_m(s)}),
\begin{equation}
\label{eq:h_m_(T+1)}
    h_m(\mathbb{T}+1) = \mathbb{T}(\mathbb{T}+1)\sum_{k=0}^{m-2}(-1)^{m-2-k}\mathbb{T}^k.
\end{equation}

Fix $m\geq 2$. Then by (\ref{eq:B_m_T+1_factor}),
\begin{equation*}
    \mathbb{U}(G_{m,2}') = \mathbb{B}_{m+1} = \mathbb{T}(\mathbb{T}+1)\left(\mathbb{T}^{m-1} + \sum_{k=0}^{m-2}(-1)^{m-2-k}(k+1)\mathbb{T}^k \right),
\end{equation*}
so (\ref{eq:clasp_necklace_class_T}) is satisfied for $n=2$. We proceed by induction on $n$. Let $n\geq 2$ and assume (\ref{eq:clasp_necklace_class_T}) holds for $n$. 
By (\ref{eq:clasp_necklace_deletioncontraction}) together with (\ref{eq:f_m(T+1)_plus_g_m_equals_B_m}), (\ref{eq:h_m_(T+1)}), we then have
\begin{align*}
    \mathbb{U}(G'_{m, n+1})
    &= (f_m(\mathbb{T}+1) + g_m)\mathbb{U}(G'_{m,n}) + h_m(\mathbb{T}+1)\mathbb{B}_m^{n-1}\\
    &= \mathbb{T}(\mathbb{T}+1)\mathbb{B}_m^{n-1}\left(\mathbb{T}^{m-1} + \sum_{k=0}^{m-2}(-1)^{m-2-k}(n+k-1)\mathbb{T}^k  + \sum_{k=0}^{m-2}(-1)^{m-2-k}\mathbb{T}^k\right)\\
    &=  \mathbb{T}(\mathbb{T}+1)\mathbb{B}_m^{n-1}\left(\mathbb{T}^{m-1} + \sum_{k=0}^{m-2}(-1)^{m-2-k}(n+k)\mathbb{T}^k\right).
\end{align*}
\end{proof}

\begin{proposition}[Log-concavity for clasped necklaces]
\label{prop:clasp_necklace_LC}
For $m \geq 1, n\geq 2$, the class of the clasped necklace graph of $n-1$ $m$ bananas and a single $1$-banana is log-concave in $\mathbb{S}$. 
\end{proposition}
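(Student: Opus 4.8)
The plan is to start from the closed form of Lemma \ref{lem:clasp_necklace_class_T}, strip off the factors already known to be well behaved, and prove log-concavity of the one remaining factor by an induction in the style of Lemmas \ref{lem:g_m_LC} and \ref{lem:b_m_LC}.

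First I would dispose of the case $m=1$: then $G'_{1,n}$ is the $n$-gon, obtained from a $2$-banana by $n-2$ edge splittings, so $\mathbb{U}(G'_{1,n}) = \mathbb{B}_2(\mathbb{S}+2)^{n-2}$ is a product of polynomials --- namely $\mathbb{B}_2 = b_2(\mathbb{S})$ and $\mathbb{S}+2$ --- that are log-concave with positive integer coefficients and hence have no internal zeros, so Proposition \ref{prop:prod_log_concave} gives the claim. For $m\geq 2$ and $n\geq 2$ write $\mathbb{T} = \mathbb{S}+1$; Lemma \ref{lem:clasp_necklace_class_T} then reads
\[
\mathbb{U}(G'_{m,n}) = (\mathbb{S}+1)(\mathbb{S}+2)\,\mathbb{B}_m^{n-2}\,P_{m,n}, \qquad P_{m,n} := \mathbb{T}^{m-1} + \sum_{k=0}^{m-2}(-1)^{m-2-k}(n+k-1)\mathbb{T}^k .
\]
Since $\mathbb{S}+1$, $\mathbb{S}+2$, and $\mathbb{B}_m = b_m(\mathbb{S})$ (Lemma \ref{lem:b_m_LC}) are each log-concave with positive integer coefficients and no internal zeros, repeated use of Proposition \ref{prop:prod_log_concave} reduces everything to showing that $P_{m,n}$, regarded as a polynomial in $\mathbb{S}$, is log-concave with positive integer coefficients for all $m\geq 2$, $n\geq 2$.

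The engine of the proof is a recursion of exactly the shape of (\ref{eq:g_m_n_recursive}) and (\ref{eq:b_m_n_recursive}): expanding $P_{m,n}$ and $\mathbb{T}\,P_{m-1,n+1}$ in powers of $\mathbb{T}$ and matching coefficients gives
\[
P_{m,n} = (\mathbb{S}+1)\,P_{m-1,n+1} + (-1)^m(n-1),
\]
valid for all $m\geq 2$ and $n\geq 1$ with the convention $P_{1,n}\equiv 1$ (so $P_{2,n} = \mathbb{S}+n$). I would then prove by induction on $m$ that, for every integer $n\geq 1$, $P_{m,n}$ is a polynomial of degree $m-1$ with positive integer coefficients which is log-concave. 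The degree claim and positivity of the nonconstant coefficients are immediate from the recursion together with the inductive hypothesis, since multiplying by $\mathbb{S}+1$ preserves positivity of coefficients; for the constant term, evaluating $P_{m,n}$ at $\mathbb{T}=1$ gives $P_{m,n}(0) = (m+1)/2$ when $m$ is odd and $P_{m,n}(0) = n + (m-2)/2$ when $m$ is even, a positive integer in either case, and positivity of all coefficients yields the absence of internal zeros. For log-concavity, $(\mathbb{S}+1)P_{m-1,n+1}$ is log-concave by Proposition \ref{prop:prod_log_concave}; since adding the constant $(-1)^m(n-1)$ changes only the degree-$0$ coefficient of $P_{m,n}$, every log-concavity inequality $a_k^2\geq a_{k-1}a_{k+1}$ with $k\geq 2$ is inherited verbatim from $(\mathbb{S}+1)P_{m-1,n+1}$, and only the degree-$1$ inequality $a_1^2\geq a_0a_2$ remains; I would settle that by computing the three lowest coefficients of $P_{m,n}$ explicitly as functions of $m$ and $n$.

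The main obstacle I anticipate is precisely this degree-$1$ verification: one needs closed-form expressions for $a_0, a_1, a_2$ --- where $a_1$ and $a_2$ carry alternating sums of the type $\sum_j (-1)^j j^2$ --- together with a check that $a_1^2 - a_0a_2\geq 0$ for all $m\geq 3$, $n\geq 1$. The genuinely constrained case is $m$ even, where the added constant $n-1$ enlarges $a_0$; in the first few instances $a_1^2 - a_0a_2$ collapses to a manifestly nonnegative expression in $n$ (for example $5n+21$ at $m=4$ and $21n+123$ at $m=6$), which makes me confident the general inequality comes out cleanly once the coefficient formulas are assembled, though that assembly is where the real computation lies. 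Everything else --- the reduction through Lemma \ref{lem:clasp_necklace_class_T}, the derivation of the recursion, the inheritance of the higher-degree log-concavity inequalities, and the constant-term bookkeeping --- is routine.
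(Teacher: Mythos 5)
Your proposal is correct and follows essentially the same route as the paper's proof: the same reduction through Lemma \ref{lem:clasp_necklace_class_T} and Proposition \ref{prop:prod_log_concave}, the same auxiliary polynomial $p_{m,n}$, the same recursion $p_{m,n}=(s+1)p_{m-1,n+1}+(-1)^m(n-1)$, and the same induction that leaves only the degree-$1$ inequality to check. The one step you defer --- computing $a_0,a_1,a_2$ explicitly in both parities of $m$ and verifying $a_1^2-a_0a_2>0$ --- is exactly what the paper does, and your sample values ($5n+21$ at $m=4$, $21n+123$ at $m=6$) agree with its general closed-form expressions, so the plan goes through.
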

\begin{proof}
When $m=1$,  $G'_{1,n} = G_{1,n}$, hence its Grothendieck class is ultra log-concave in $\mathbb{S}$ as discussed above. For $m,n\geq 2$, the formula for the Grothendieck class (\ref{eq:clasp_necklace_class_T}) in terms of $s:= \mathbb{S}$ becomes
\begin{equation*}
    \mathbb{U}(G'_{m,n})
    = (s+1)(s+2)b_m^{n-2}\left((s+1)^{m-1} +\sum_{k=0}^{m-2}(-1)^{m-2-k}(n+k-1)(s+1)^k\right).
\end{equation*}
By Proposition \ref{prop:prod_log_concave} and Theorem \ref{thm:f_g_h_b_LC}, 
it suffices to show that 
\begin{equation*}
    p_{m,n}(s) := (s+1)^{m-1} +\sum_{k=0}^{m-2}(-1)^{m-2-k}(n+k-1)(s+1)^k
\end{equation*}
is log-concave with no internal zeros. For $m=2, n\geq 2$, we have 
\begin{equation*}
    p_{2, n}(s) = s+1 + n-1 = s+n,
\end{equation*}
which has no internal zeros and is trivially log-concave. 
Let $m\geq 3$ and assume that $p_{m-1,j}$ is log-concave with no internal zeros for all $j \geq 2$.
Observe that 
\begin{align*}
    p_{m,n}(s) &= (s+1)(s+1)^{m-2} + (s+1)\sum_{k=1}^{m-2}(-1)^{m-2-k}(n+k-1)(s+1)^{k-1} + (n-1)(-1)^{m-2}\\
    &= (s+1)p_{m-1, n+1} +  (n-1)(-1)^{m-2}. 
\end{align*}
By assumption, $p_{m-1, n+1}$ is log-concave with no internal zeros, hence so is $(s+1)p_{m-1, n+1}$.  We denote by $a_i$ the coefficient of the degree $i$ term of $p_{m, n}$. It suffices to check that $a_1^2 \geq a_0a_2$ and $a_1 >0$. 

When $m$ is odd (hence assuming $m\geq 3$),
\begin{equation*}
\begin{cases}
a_0 = \frac{m+1}{2}\\
a_1 = \frac{(m-1)(n+m)}{2}\\
a_2 = \frac{(m-1)(2nm+2m^2-6n-7m+7)}{8}
\end{cases}
\end{equation*}
and
\begin{equation*}
    a_1^2 - a_0a_2 = \frac{(m-1)(4n^2m+6nm^2+2m^3-4n^2-12nm-7m^2+14n+12m-11)}{16} > 0.
\end{equation*}
When $m$ is even,
\begin{equation*}
\begin{cases}
a_0 = \frac{m+2n-2}{2}\\
 a_1 = \frac{nm+m^2-2n-2m+2}{8}\\
 a_2 = \frac{(m-2)(2nm+2m^2-4n-5m+4)}{8}
\end{cases}
\end{equation*}
and
\begin{equation*}
    a_1^2-a_0a_2 = \frac{m(2nm^2+2m^3-2nm-3m^2-4n+4)}{16} > 0.
\end{equation*}
We see in both cases that $a_1^2>a_0a_2$ and $a_1>0$, completing the proof. 
\end{proof}

\section{Appendix: Coefficient Calculations}
We present explicit expressions for the coefficients of the recursive polynomials used to prove the results of Section \ref{sec:recursive_polynomials} here. The open source mathematics software Sage (\cite{Sage}) was used to simplify some algebraic expressions. As the computations are straightforward, we state only the resulting simplified expressions. Throughout the following, unless otherwise stated, $a_i$ denotes the coefficient of the degree $i$ term of the polynomial in question. It is assumed in the expression for $a_j$ that the polynomial has degree at least $j$ ($a_j=0$ otherwise). Further, when checking either the log-concave (LC) inequality (\ref{eq:LC_ineq}) or the ultra log-concave (ULC) inequality (\ref{eq:ULC_factor}) for degree $j$, we may assume that the polynomial has degree at least $j+1$ as otherwise the inequalities are both trivially satisfied by $a_{j+1}=0$. 
Note that for $m\geq 1$, $f_m, g_{m,n}, h_m$ are all of degree $m-1$ while $b_{m,n}$ is of degree $m$. 

\subsection{Coefficients and inequality computations for $f_m$}
\label{sec:f_m_coeff}
We compute the coefficients for $f_m$ using the explicit formula for $f_m$ given by Claim 3.5 of \cite{MR4620363}
\begin{equation}
    f_m(s) = \sum_{j=1}^{n-1}\sum_{k=1}^{\lfloor \frac{m}{2}\rfloor} {m-2k \choose j-1} s^j + \begin{cases}
    1 & m \text{ is odd}\\
    0 & m \text{ is even}
    \end{cases}
\end{equation}
together with the well-known formulas for the sum of the first $n$ positive (even, odd) integers, squares, and cubes. 
We obtain
\begin{equation}
\label{eq:f_m_coeff_deg0}
    a_0=\begin{cases}
    1 & m \text{ is odd }\\
    0 & m \text{ is even}
    \end{cases},
\end{equation}
\begin{equation}
\label{eq:f_m_coeff_deg1}  
a_1 = \begin{cases}
\frac{m-1}{2} & m \text{ is odd}\\
\frac{m}{2} & m \text{ is even},
\end{cases}
\end{equation}
\begin{equation}
\label{eq:f_m_coeff_deg2}
a_2 = \begin{cases}
 \frac{(m-1)^2}{4} & m \text{ is odd}\\
 \frac{m(m-2)}{4} & m \text{ is even}
\end{cases},
\end{equation}
\begin{equation}
\label{eq:f_m_coeff_deg3}
a_3=
\begin{cases}
\frac{(m-1)(m-3)(2m-1)}{24} & m \text{ is odd}\\
\frac{m(m-2)(2m-5)}{24} & m \text{ is even}
\end{cases},
\end{equation}
and
\begin{equation}
\label{eq:f_m_coeff_deg4}
a_4 = \begin{cases}
    \frac{(m-1)(m-3)(m^2-4m+1)}{48} & m \text{ is odd}\\
    \frac{m(m-2)^2(m-4)}{48} & m \text{ is even}
\end{cases}. 
\end{equation}
For completion, we prove (\ref{eq:f_m_coeff_deg0})-(\ref{eq:f_m_coeff_deg4}) by induction using the recursive relation (\ref{eq:f_m+1_from_f_m}). Note that (\ref{eq:f_m_coeff_deg0})-(\ref{eq:f_m_coeff_deg4}) hold for $f_0=0, f_1=1, f_2=s$. Assume $m\geq 3$ and that (\ref{eq:f_m_coeff_deg0})-(\ref{eq:f_m_coeff_deg4}) hold for $f_{m-1}$. 

We consider first the case where $m$ is odd. Then, 
\begin{align*}
    f_{m-1}& = \text{(higher order terms)} + \frac{(m-1)(m-3)^2(m-5)}{48}s^4 \\
    &+ \frac{(m-1)(m-3)(2m-7)}{24}s^3 + \frac{(m-1)(m-3)}{4}s^2 + \frac{m-1}{2}s
\end{align*}
hence
\begin{equation*}
    a_4 = \frac{(m-1)(m-3)^2(m-5)}{48} + \frac{(m-1)(m-3)(2m-7)}{24}
    = \frac{(m-1)(m-3)(m^2-4m+1)}{48},
\end{equation*}
\begin{equation*}
    a_3 = \frac{(m-1)(m-3)(2m-7)}{24} + \frac{(m-1)(m-3)}{4}
    = \frac{(m-1)(m-3)(2m-1)}{24},
\end{equation*}
\begin{equation*}
    a_2 = \frac{(m-1)(m-3)}{4} + \frac{m-1}{2} = \frac{(m-1)^2}{4},
\end{equation*}
\begin{equation*}
    a_1 = \frac{m-1}{2},
\end{equation*}
and 
\begin{equation*}
        a_0 = (-1)^{m-1} = 1.
\end{equation*}
Suppose now that $m$ is even. Then,
\begin{align*}
    f_{m-1}& = \text{(higher order terms)} + \frac{(m-2)(m-4)(m^2-6m+6)}{48}s^4 \\
    & + \frac{(m-2)(m-4)(2m-3)}{24}s^3 + \frac{(m-2)^2}{4}s^2 + \frac{m-2}{2}s + 1,
\end{align*}
hence
\begin{equation*}
    a_4 = \frac{(m-2)(m-4)(m^2-6m+6)}{48} + \frac{(m-2)(m-4)(2m-3)}{24}
    = \frac{m(m-2)^2(m-4)}{48},
\end{equation*}
\begin{equation*}
    a_3 = \frac{(m-2)(m-4)(2m-3)}{24} + \frac{(m-2)^2}{4}
    = \frac{m(m-2)(2m-5)}{24},
\end{equation*}
\begin{equation*}
    a_2 = \frac{(m-2)^2}{4} + \frac{m-2}{2}
    = \frac{(m-2)(m-4)}{4},
\end{equation*}
\begin{equation*}
    a_1 = \frac{m-2}{2} + 1=\frac{m}{2},
\end{equation*}
and
\begin{equation*}
    a_0 = 1 + (-1)^{m-1} = 0. 
\end{equation*}
Having established (\ref{eq:f_m_coeff_deg0})-(\ref{eq:f_m_coeff_deg4}), we now check the inequalities for log-concavity and ultra log-concavity. 

We begin by checking log-concavity (\ref{eq:LC_ineq}) for the degree 1 coefficient for the proof of Lemma \ref{lem:f_m_LC}. We compute
\begin{equation*}
   a_1^2 - a_0a_2 = \begin{cases}
        \frac{(m-1)^2}{4} - \frac{(m-1)^2}{4} = 0 & m \text{ is odd }\\
         a_1^2 - a_0a_2 = \frac{m^2}{4} > 0 & m \text{ is even}
   \end{cases}. 
\end{equation*}
We now check the $\text{ULC}$ inequality (\ref{eq:ULC_factor}) for degrees 1, 2, and 3 for the proof of Proposition \ref{prop:f_g_h_b_ULC}. Recall that $f_m$ has degree $m-1$. Assuming $m\geq 4$, we compute
\begin{equation*}
   \text{Degree 1:} \quad  (m-2)a_1^2 - 2(m-1)a_0a_2 = \begin{cases}
    -\frac{m(m-1)^2}{4} < 0 & m \text{ is odd}\\
    \frac{(m-2)m^2}{4}> 0 & m \text{ is even}
    \end{cases},
\end{equation*}
\begin{equation*}
    \text{Degree 2:} \quad  2(m-3)a_2^2-3(m-2)a_1a_3 =
    \begin{cases}
    \frac{m(m-1)^2(m-3)}{16} > 0 & m \text{ is odd} \\
   - \frac{m^2(m-2)^2}{16}<0 & m \text{ is even}
    \end{cases},
\end{equation*}
and
\begin{equation*}
     \text{Degree 3:} \quad 3(m-4)a_3^2 - 4(m-3)a_2a_4 = 
    \begin{cases}
       -\frac{m(m-1)^2(m-3)^2}{64} <0 & m \text{ is odd}\\
       \frac{m^2(m-2)^2(m-4)}{192} \geq 0 & m \text{ is even}
    \end{cases}. 
\end{equation*}
Finally, we check the $\text{ULC}(m)$ inequality (\ref{eq:ULC_factor_orderm}) for degrees 1, 2, and 3 for the proof of Proposition \ref{prop:f_g_h_b_ULC_m}.
Assuming $m\geq 6$, we compute
\begin{equation*}
     \text{Degree 1:} \quad  (m-1)a_1^2 - 2ma_0a_2 =
     \begin{cases}
       -\frac{(m+1)(m-1)^2}{4} < 0 & m \text{ is odd}\\
       \frac{m^2(m-1)}{4} >0 & m \text{ is even}
     \end{cases},
\end{equation*}
\begin{equation*}
     \text{Degree 2:} \quad 
      2(m-2)a_2^2 - 3(m-1)a_1a_3
      = \begin{cases}
        \frac{(m+1)(m-1)^3}{16} > 0 & m \text{ is odd}\\
        -\frac{m^2(m-2)(m-3)}{16} <0 & m \text{ is even}
      \end{cases},
\end{equation*}
and
\begin{equation*}
     \text{Degree 3:} \quad 
      3(m-3)a_3^2 - 4(m-2)a_2a_4 = \begin{cases}
       \frac{(m+1)^2(m-1)^2(m-3)}{192} > 0 & m \text{ is odd}\\
       \frac{m^2(m-2)^2(5m-11)}{192}>0 & m \text{ is even}
      \end{cases}. 
\end{equation*}
We summarize the results of these calculations below; for simplicity, assume $m\geq 6$. 
\begin{table}[H]
    \centering
    \begin{tabular}{|c|c|c|c|}
    \hline 
        Parity of $m$ &  Degree  & ULC & ULC$(m)$\\
        \hline 
        Odd & 1 & Fails & Satisfies\\
        Odd & 2 & Satisfies & Satisfies\\
        Odd & 3 & Fails & Satisfies \\
        Even & 1 & Satisfies & Satisfies\\
        Even & 2 & Fails & Fails\\
        Even & 3 & Satisfies & Satisfies\\
        \hline 
    \end{tabular}
    \caption{Checking ULC and ULC$(m)$ for $f_m$ for degrees 1, 2 and 3}
    \label{tab:f_m_ineq_check}
\end{table}

\subsection{Coefficients and inequality computations for $g_{m,n}$}
\label{sec:g_m_n_coeff}
Using the equation
\begin{equation*}
    g_{m,n}(s) = n(s+1)^{m-1} - f_m(s)
\end{equation*}
and the coefficients of $f_m(s)$ as calculated in (\ref{eq:f_m_coeff_deg0})-(\ref{eq:f_m_coeff_deg4}), we obtain the following expressions for the coefficients of $g_{m,n}$:
\begin{equation}
\label{eq:g_mn_coeff_deg0}
a_0 = \begin{cases}
    n-1 & m \text{ is odd}\\
    n & m \text{ is even}
\end{cases},
\end{equation}
\begin{equation}
\label{eq:g_mn_coeff_deg1}
a_1 = \begin{cases}
  \frac{(m-1)(2n-1)}{2}& m \text{ is odd}\\
  \frac{2n(m-1)-m}{2} & m \text{ is even }
\end{cases},
\end{equation}
\begin{equation}
\label{eq:g_mn_coeff_deg2}
a_2 = \begin{cases}
 \frac{(m-1)[(m-1)(2n-1)-2n]}{4} & m \text{ is odd}\\
 \frac{(m-2)(2n(m-1)-m)}{4} & m \text{ is even}
\end{cases},
\end{equation}
\begin{equation}
\label{eq:g_mn_coeff_deg3}
a_3 = \begin{cases}
 \frac{(m-1)(m-3)(4n(m-2)-(2m-1))}{24} & m \text{ is odd}\\
 \frac{(m-2)(4n(m-1)(m-3) - m(2m-5))}{24} & m \text{ is even }
\end{cases},
\end{equation}
and
\begin{equation}
 \label{eq:g_mn_coeff_deg4}
a_4 = \begin{cases}
  \frac{(m-1)(m-3)(2n(m-2)(m-4)-(m^2-4m+1))}{48} & m \text{ is odd}\\
   \frac{(m-2)(m-4)(2n(m-1)(m-3) - m(m-2))}{48} & m \text{ is even}
\end{cases}. 
\end{equation}
For $g_{m,m}$, these simplify to 
\begin{equation}
\label{eq:g_m_coeff_deg0}
a_0 = \begin{cases}
    m-1 & m \text{ is odd}\\
    m & m \text{ is even}
\end{cases},
\end{equation}
\begin{equation}
\label{eq:g_m_coeff_deg1}
a_1 = \begin{cases}
  \frac{(m-1)(2m-1)}{2}& m \text{ is odd}\\
  \frac{m(2m-3)}{2} & m \text{ is even }
\end{cases},
\end{equation}
\begin{equation}
\label{eq:g_m_coeff_deg2}
a_2 = \begin{cases}
 \frac{(m-1)(2m^2-5m+1)}{4} & m \text{ is odd}\\
 \frac{m(m-2)(2m-3)}{4} & m \text{ is even}
\end{cases},
\end{equation}
\begin{equation}
\label{eq:g_m_coeff_deg3}
a_3 = \begin{cases}
 \frac{(m-1)(m-3)(4m^2-10m+1)}{24} & m \text{ is odd}\\
 \frac{m(m-2)(4m^2-18m+17)}{24} & m \text{ is even }
\end{cases},
\end{equation}
and
\begin{equation}
 \label{eq:g_m_coeff_deg4}
a_4 = \begin{cases}
  \frac{(m-1)(m-3)(2m^3-13m^2+20m-1)}{48} & m \text{ is odd}\\
   \frac{m(m-2)(m-4)(2m^2-9m+8)}{48} & m \text{ is even}
\end{cases}. 
\end{equation}
Using (\ref{eq:g_mn_coeff_deg0})-(\ref{eq:g_mn_coeff_deg2}), we check the log-concavity inequality (\ref{eq:LC_ineq}) in degree 1 for the proof of Lemma \ref{lem:g_m_LC}. Assuming $m\geq 3, n\geq 1$, we compute
\begin{equation*}
 a_1^2 - a_0a_2 = \begin{cases}
  \frac{(m-1)n(2nm-m-1)}{4} > 0 & m \text{ is odd}\\
  \frac{m(n-1)(2nm-2n-m)}{4} > 0 & m \text{ is even}
 \end{cases}.    
\end{equation*}
We now check the ULC inequality (\ref{eq:ULC_factor}) for the coefficients of degrees 1, 2, and 3 for the proofs of Lemma \ref{lem:g_m_n_ULC_3} and Proposition \ref{prop:f_g_h_b_ULC}. Recall that $g_{m,n}$ has degree $m-1$. Assuming $m\geq 4, n\geq 1$, we obtain
\begin{equation*}
    \text{Degree 1}: \quad 
    (m-2)a_1^2 - 2(m-1)a_0a_2= \begin{cases}
    \frac{(m-1)^2(2nm-2n-m)}{4} > 0 & m \text{ is odd}\\
    -\frac{m(m-2)(2nm-2n-m)}{4} < 0 & m \text{ is even}
    \end{cases},
\end{equation*}
\begin{equation*}
    \text{Degree 2}: \quad  2(m-3)a_2^2- 3(m-2)a_1a_3 = \begin{cases}
      -\frac{(m-1)^2(m-3)^2(2nm-4n-m)}{16} < 0 & m \text{ is odd}\\
      \frac{m(m-2)^2(2nm-2n-m)}{16} > 0 & m \text{ is even}
    \end{cases},
\end{equation*}
and
\begin{equation*}
    \text{Degree 3}: \quad \begin{cases}
     \frac{(m-1)^2(m-3)^2(8nm-16n-3m)}{192} > 0 & m \text{ is odd}\\
     \frac{m^2(m-2)^2(m-4)}{192} > 0 & m \text{ is even}
    \end{cases}. 
\end{equation*}
Finally, we check the $\text{ULC}(m)$ inequality (\ref{eq:ULC_factor_orderm}) for degrees 1, 2, and 3 for the proof of Proposition \ref{prop:f_g_h_b_ULC_m}. Note that for Proposition  \ref{prop:f_g_h_b_ULC_m}, it suffices to check the inequalities for $g_{m,m}$ specifically rather than $g_{m,n}$ more generally. For completeness, we include the expressions for the left-hand sides of (\ref{eq:ULC_factor}) for all combinations of $m,n$, but state only the result of the inequality for $m=n$ when the inequality is dependent on the relative sizes of $m,n$.

Assuming $m\geq 6$, we compute
\begin{equation*}
    \text{Degree 1}: \quad 
    (m-1)a_1^2 - 2ma_0a_2 = \begin{cases}
   \frac{(m-1)(2nm^2+4n^2-2nm-m^2-4n+1)}{4} > 0 & m \text{ is odd}\\
   -\frac{(m^2-2n-m)(2nm-2n-m)}{4} & m \text{ is even}\\
   -\frac{m^2(m-3)(2m-3)}{4} < 0 & m=n \text{ is even}
    \end{cases},
\end{equation*}
\begin{flalign*}
 & \text{Degree 2}: \quad     2(m-2)a_2^2 - 3(m-1)a_1a_3  \\ 
 &= 
 \begin{cases}
  \frac{(m-1)^2(8n^2m-2nm^2-16n^2+m^2+2n-1)}{16} > 0 & m \text{ is odd}  \\
  \frac{(m-2)(2nm-2n-m)(4nm+m^2-4n-3m)}{16} > 0 & m \text{ is even}
 \end{cases},
\end{flalign*}
and
\begin{flalign*}
  &  \text{Degree 3}:  \quad 
     3(m-3)a_3^2 - 4(m-2)a_2a_4 \\
& = \begin{cases}
       \frac{(m-1)^2(m-3)(4nm-8n-m-1)^2}{192} > 0 & m \text{ is odd}  \\
       \frac{(m-2)^2(16n^2m^3 - 80n^2m^2 - 16nm^3 + 112n^2m + 56nm^2 + 5m^3 - 48n^2 - 40nm - 11m^2)}{192} \geq 0 & m \text{ is even}
     \end{cases}   
\end{flalign*}
as \begin{equation*}
    16n^2m^3 - 80n^2m^2 - 16nm^3 + 56nm^2
    = 8nm^2(2nm - 10n - 2m + 7) > 0
\end{equation*}
for $m\geq 6, n\geq 1$. 
We summarize the results of these calculations below; for simplicity, assume $m\geq 6$ and $n\geq 1$.  
\begin{table}[H]
    \centering
    \begin{tabular}{|c|c|c|c|}
    \hline 
        Parity of $m$ &  Degree  & ULC & ULC$(m)$\\
        \hline 
        Odd & 1 & Satisfies & Satisfies\\
        Odd & 2 & Fails & Satisfies\\
        Odd & 3 & Satisfies & Satisfies \\
        Even & 1 & Fails & Fails for $n=m$\\
        Even & 2 & Satisfies & Satisfies\\
        Even & 3 & Satisfies & Satisfies\\
        \hline 
    \end{tabular}
    \caption{Checking ULC and ULC$(m)$ for $g_{m,n}$ for degrees 1, 2 and 3, assuming $m\geq 6, n\geq 1$. }
    \label{tab:g_mn_ineq_check}
\end{table}

\subsection{Coefficients and inequality computations for $b_{m,n}$}
\label{sec:b_m_n_coeff}
Substituting the coefficients of $f_m(s)$ as calculated in (\ref{eq:f_m_coeff_deg0})-(\ref{eq:f_m_coeff_deg4}) into the equation
\begin{equation*}
    b_{m,n}(s) = n(s+1)^{m-1} + (s+1)f_m,
\end{equation*}
 we calculate the following expressions for the coefficients of $b_{m,n}$: 
\begin{equation}
\label{eq:b_mn_coeff_deg0}
 a_0 = \begin{cases}
  n+1 & m \text{ is odd}\\
  n & m \text{ is even}
 \end{cases} ,  
\end{equation}
\begin{equation}
\label{eq:b_mn_coeff_deg1}
    a_1 = \begin{cases}
       \frac{(m-1)(2n+1)+2}{2} & m \text{ is odd}  \\
        \frac{2n(m-1)+m}{2} & m \text{ is even}
    \end{cases},
\end{equation}
\begin{equation}
\label{eq:b_mn_coeff_deg2}
    a_2 = \begin{cases}
    \frac{(m-1)(2n(m-2)+m+1)}{4} & m \text{ is odd}  \\  
    \frac{2n(m-1)(m-2)+m^2}{4} & m \text{ is even}
    \end{cases},
\end{equation}
\begin{equation}
\label{eq:b_mn_coeff_deg3}    
a_3 = \begin{cases}
    \frac{(m-1)(4n(m-2)(m-3)+2m^2-m-3)}{24} & m \text{ is odd}\\
    \frac{(m-2)(4n(m-1)(m-3)+m(2m+1))}{24} & m \text{ is even}
\end{cases},
\end{equation}
\begin{equation}
 \label{eq:b_mn_coeff_deg4}
 a_4 = \begin{cases}
  \frac{(m-1)(m-3)(2n(m-2)(m-4) + m^2-1)}{48} & m \text{ is odd}   \\
  \frac{(m-2)(2n(m-1)(m-3)(m-4) + m(m^2-2m-2))}{48} & m \text{ is even}
 \end{cases}
\end{equation}
We use (\ref{eq:b_mn_coeff_deg0})-(\ref{eq:b_mn_coeff_deg2}), we check the LC inequality (\ref{eq:LC_ineq}) in degree 1 and 2 for the proof of Lemma \ref{lem:b_m_LC}. Assuming $m\geq 3, n\geq 1$, we compute
\begin{equation*}
   \text{Degree 1}: \quad  a_1^2-a_0a_2 = \begin{cases}
     \frac{2n^2m^2-2n^2m+nm^2 + 6nm-7n+2m+2}{4} > 0 & m \text{ is odd} \\
     \frac{m(2n^2m-2n^2+3nm-4n+m)}{4} > 0 & m \text{ is even}
    \end{cases}
\end{equation*}
\begin{align*}
    & \text{Degree 2}: \quad   a_2^2 - a_1a_3 \\
    & =
    \begin{cases}
        \frac{(m-1)(4n^2m^3-12n^2m^2+4nm^3 + 8n^2m-2nm^2 + m^3 - 12nm + 2m^2 - 6n+m)}{48} > 0 & m \text{ is odd} \\
        \frac{m(4n^2m^3-16n^2m^2+4nm^3+20n^2m-2nm^2+m^3-8n^2-22nm+3m^2+20n+2m)}{48} > 0 & m \text{ is even}
    \end{cases}
\end{align*}
For the odd case, we use the observations that for $m=3$,
\begin{equation*}
    a_2^2 - a_1a_3 = \frac{6n^2+12n+1}{12} > 0 
\end{equation*}
and for $m\geq 5$, we have the inequalities
\begin{equation*}
    \begin{cases}
     4n^2m^3 \geq 12n^2m^2\\
     nm^3 > 2nm^2\\
     3nm^3 \geq 12nm+6n=6n(2m+1)\\
    \end{cases}. 
\end{equation*}
For the even case, we use the inequalities
\begin{equation*}
    \begin{cases}
        4n^2m^3-16n^2m^2+20n^2m-8n^2 = 4n^2(m^3-4m^2+5m-2) > 0\\
        4nm^3-2nm^2-22nm+20 = 2n(2m^3-m^2-11m+10) > 0
    \end{cases}
\end{equation*}
that hold for $m\geq 4, n\geq 1$. 

We now check the ULC inequality (\ref{eq:ULC_factor}) of the coefficients of degree 1, 2, and 3 for the proofs of Lemma \ref{lem:b_m_n_ULC_3} and Propositions \ref{prop:f_g_h_b_ULC}, \ref{prop:f_g_h_b_ULC_m}. Note that $b_{m,n}$ is of degree $m$. As in Section \ref{sec:g_m_n_coeff},  for Propositions  \ref{prop:f_g_h_b_ULC}, \ref{prop:f_g_h_b_ULC_m}, it suffices to check the inequalities for $b_{m,m}$ specifically, but we include the expressions for the left-hand sides of (\ref{eq:ULC_factor}) for all combinations of $m,n$ more generally and the state the inequality result in general when possible. Assuming $m\geq 4, n\geq 2$, we obtain
\begin{equation*}
    \text{Degree 1}: \quad  (m-1)a_1^2 - 2ma_0a_2 =
    \begin{cases}
      -\frac{(m-1)(2nm^2-4n^2-6nm+m^2+4n-1)}{4} &  m \text{ is odd}   \\
      - \frac{(m-1)(2m^3-9m^2+4m-1)}{4} < 0 & m=n \text{ is odd}\\
       \frac{2nm^3+4n^2m-8nm^2+m^3-4n^2 + 4nm-m^2}{4} >0 & m \text{ is even}
    \end{cases},
\end{equation*}
\begin{align*}
    & \text{Degree 2}: \quad 
     2(m-2)a_2^2 - 3(m-1)a_1a_3 \\
     & = 
     \begin{cases}
       \frac{(m-1)^2(8n^2m-2nm^2-16n^2-m^2+2n-2m-1)}{16} \geq 0 & m\leq n, m \text{ is odd}   \\
        \frac{(m-2)(8n^2m^2+ 2nm^3-16n^2m-12nm^2 + m^3+ 8n^2+ 10nm+m^2)}{16} > 0 & m \text{ is even}
     \end{cases}
\end{align*}
\begin{align*}
    & \text{Degree 3}: \quad
 3(m-3)a_3^2 - 4(m-2)a_2a_4 \\
 &= \begin{cases}
   \frac{(m-1)^2(m-3)(4nm-8n-m-1)^2}{192} \geq 0 & m \text{ is odd}   \\
   \frac{(m-2)^2(16n^2m^3 - 80n^2m^2 - 16nm^3 + 112n^2m + 56nm^2-3m^3-48n^2-40nm-3m^2)}{192} & m \text{ is even}
 \end{cases}
\end{align*}
In the even degree 3 case, we make use of the following explicit computations:
\begin{equation*}
3(m-3)a_3^2 - 4(m-2)a_2a_4 = 
    \begin{cases}
    3n^2-6n-5 > 0 & m=4, n\geq 3\\
     100n^2 - 140n - 63 >0   & m =6, n\geq 2\\
     12(4655n^2+2898n-27) > 0 & m=8, n\geq 1\\
     4(756n^2-900n-275)> 0& m=10, n\geq 2\\
    \end{cases}
\end{equation*}
and that for $m\geq 12, n\geq 2$, the inequalities
\begin{equation*}
     112n^2m + 56nm^2-48n^2-40nm-3m^2 > 0
\end{equation*}
and 
\begin{equation*}
        16n^2m^3 - 80n^2m^2 - 16nm^3 - 3m^3
    = 7n^2m^3 - 80n^2m^2 + 8n^2m^3-16nm^3 + n^2m^3 - 3m^3
    > 0 
\end{equation*}
hold. 
We summarize the results of these computations below: 
\begin{table}[H]
    \centering
    \begin{tabular}{|c|c|c|}
    \hline 
        Parity of $m$ &  Degree  & ULC \\
        \hline 
        Odd & 1 & Fails for $m=n$ \\
        Odd & 2 & Satisfies for $m\geq n$\\
        Odd & 3 & Satisfies \\
        Even & 1 & Satisfies\\
        Even & 2 & Satisfies \\
        Even & 3 & Satisfies \\
        \hline 
    \end{tabular}
    \caption{Checking ULC (equivalent to ULC$m$) for $b_{m,n}$ for degrees 1, 2 and 3, assuming $m\geq 4, n\geq 3$. }
    \label{tab:b_mn_ineq_check}
\end{table}



\bibliographystyle{alpha}
\bibliography{BIB}

\end{document}